\newcommand{\beq}{\begin{equation}}
\newcommand{\eeq}{\end{equation}}
\newcommand{\beqa}{\begin{eqnarray}}
\newcommand{\eeqa}{\end{eqnarray}}
\newcommand{\beqas}{\begin{eqnarray*}}
\newcommand{\eeqas}{\end{eqnarray*}}
\newcommand{\ba}{\begin{array}}
\newcommand{\ea}{\end{array}}
\newcommand{\bi}{\begin{itemize}}
\newcommand{\ei}{\end{itemize}}
\newcommand{\nn}{\nonumber}
\newcommand{\mcX}{{\mathcal X}}
\newcommand{\mcY}{{\mathcal Y}}
\newcommand{\mcL}{{\mathcal L}}
\newcommand{\mcT}{{\mathcal T}}
\newcommand{\prox}{\mathrm{prox}}
\newcommand{\dom}{\mathrm{dom}}
\newcommand{\dist}{\mathrm{dist}}
\newcommand{\argmin}{\arg\min}
\newcommand{\argmax}{\arg\max}
\newtheorem{lemma}{Lemma}
\newtheorem{thm}{Theorem}
\newtheorem{defi}{Definition}
\newtheorem{assumption}{Assumption}
\newtheorem{rem}{Remark}
\newcounter{spb}
\def\cA{{\cal A}}
\def\cB{{\mathbb B}}
\def\cF{{\cal F}}
\def\cI{{\mathscr I}}
\def\cO{{\cal O}}
\def\cA{{\cal A}}
\def\tlambda{{\tilde \lambda}}
\def\wT{{\widehat T}}
\def\tx{{\tilde x}}
\def\ty{{\tilde y}}
\def\bR{{\mathbb{R}}}
\def\halpha{{\hat\alpha}}
\def\hdelta{{\hat\delta}}
\def\xe{{x_\epsilon}}
\def\ye{{y_\epsilon}}
\def\bbK{\mathbb{K}}
\def\bbT{\mathbb{T}}
\def\tl{\tilde\lambda}
\def\bh{{h}}
\def\bH{{H}}
\def\hh{{\bH^*_{\epsilon}}}
\def\h{\bar h}
\def\H{\bar H}
\def\cP{{\cal P}}
\def\cG{{\cal G}}
\def\AL{{\mcL}}
\def\np{{p}}
\def\nq{{q}}
\def\bff{{\rm \bf nf}}
\def\bfx{{\rm \bf x}}
\def\bfy{{\rm \bf y}}
\def\tm{{\tilde m}}
\def\tn{{\tilde n}}
\definecolor{ngreen}{RGB}{38,217,169}
\title{A first-order method for nonconvex-strongly-concave constrained minimax optimization\thanks{This work was partially supported by the Office of Naval Research under Award N00014-24-1-2702,  the Air Force Office of Scientific Research under Award FA9550-24-1-0343, and the National Science Foundation under Award IIS-2211491. It was primarily conducted during Sanyou Mei’s Ph.D. studies at the University of Minnesota.}}
\author{
Zhaosong Lu
\thanks{
Department of Industrial and Systems Engineering, University of Minnesota, USA (email: {\tt zhaosong@umn.edu}).}
\and
Sanyou Mei
\thanks{
Department of Industrial Engineering and Decision Analytics, the Hong Kong University of Science and Technology, Hong Kong, China (email: {\tt symei@ust.hk}).}
}
\date{May 12, 2024 (Revised: October 23, 2025)}
\begin{document}
\maketitle

\begin{abstract}
In this paper we study a nonconvex-strongly-concave constrained minimax problem. Specifically, we propose a first-order augmented Lagrangian method for solving it, whose subproblems are nonconvex-strongly-concave unconstrained minimax problems and suitably solved by a first-order method developed in this paper that leverages the strong concavity structure. Under suitable assumptions, the proposed method achieves an \emph{operation complexity} of $\cO(\varepsilon^{-3.5}\log\varepsilon^{-1})$, measured in terms of its fundamental operations, for finding an $\varepsilon$-KKT solution of the constrained minimax problem, which improves the previous best-known operation complexity by a factor of $\varepsilon^{-0.5}$.
\end{abstract}

\noindent {\bf Keywords:}  minimax optimization, augmented Lagrangian method, first-order method, operation complexity

\medskip

\noindent {\bf Mathematics Subject Classification:} 90C26, 90C30, 90C47, 90C99, 65K05 

\section{Introduction}

In this paper, we consider a nonconvex-strongly-concave constrained minimax problem
\beq\label{prob}
F^*=\min_{c(x)\leq 0}\max_{d(x,y)\leq 0}\{F(x,y):=f(x,y)+p(x)-q(y)\}.
\eeq
For notational convenience, throughout this paper we let $\mcX:=\dom\,p$ and $\mcY:=\dom\,q$, where $\dom\,p$ and $\dom\,q$ denote the domain of $p$ and $q$, respectively. 
Assume that problem \eqref{prob} has at least one optimal solution and the following additional assumptions hold.
\begin{assumption}\label{a1}
\begin{enumerate}[label=(\roman*)]
\item $f$ is $L_{\nabla f}$-smooth on $\mcX\times\mcY$, and $f(x,\cdot)$ is $\sigma$-strongly-concave for some constant $\sigma>0$ for any given $x\in\mcX$.\footnote{The definition of $L_F$-Lipschitz continuity, $L_{\nabla f}$-smoothness and $\sigma$-strongly-concavity is given in Subsection \ref{notation}.}
\item $p:\bR^n\to\bR\cup\{+\infty\}$ and $q:\bR^m\to\bR\cup\{+\infty\}$ are proper closed convex functions, and the proximal operators of $p$ and $q$ can be exactly evaluated.
\item $c:\bR^n\to\bR^{\tn}$ is $L_{\nabla c}$-smooth and $L_c$-Lipschitz continuous on $\mcX$, $d:\bR^n\times\bR^m\to\bR^{\tm}$ is $L_{\nabla d}$-smooth and $L_d$-Lipschitz continuous on $\mcX\times\mcY$, and $d_i(x,\cdot)$ is convex for each $x\in\mcX$.
\item The sets $\mcX$ and $\mcY$ (namely, $\dom\,p$ and $\dom\,q$) are compact.
\end{enumerate}
\end{assumption}
Problem \eqref{prob} has found application in various areas, such as perceptual adversarial robustness \cite{laidlaw2020perceptual}, robust adversarial classification \cite{ho2023adversarial}, adversarial attacks in resource allocation \cite{tsaknakis2023minimax}, network interdiction problem \cite{fu2019network,smith2013modern}, and power networks \cite{salmeron2004analysis}.

In recent years, the minimax problem of a simpler form has gained significant attention:
\beq \label{special-minimax}
\min_{x\in X}\max_{y\in Y} f(x; y),
\eeq
where $X$ and $Y$ are closed sets. This problem has found wide applications in various areas,  including adversarial training \cite{Good15,Madry18,Sin18,Wang21}, generative adversarial networks \cite{Gid19,Good14,sanj18}, reinforcement learning \cite{Dai18,Du17,Na19,qiu20,song18}, computational game \cite{Ant21,Rak13,Syr15}, distributed computing \cite{Mat10,Sha08},  prediction and regression \cite{Ces06,Tas06,Xu09,XuNe05}, and distributionally robust optimization \cite{Duc19,Sha15}. Numerous methods have been developed to solve problem \eqref{special-minimax} when $X$ and $Y$ are \emph{simple closed convex sets} (e.g., see \cite{chen2021proximal,guo2023fast,huang2020accelerated,lin20b,lin2020near,lu2020hybrid,
luo2020stochastic,nou19,ostrovskii2021efficient,xian2021faster,xu2020gradient,xu2023unified,
Yang20,zhang2020single}). 
In addition, first-order methods were developed in \cite{kong2021accelerated, zhao2024primal} for solving problem \eqref{prob} with $c(x)\equiv0$ and $d(x,y)\equiv0$. 

There have also been several studies on other special cases of problem \eqref{prob}. Specifically, in \cite{goktas2021convex}, two first-order methods called max-oracle gradient-descent and nested gradient descent/ascent methods were proposed for solving \eqref{prob}. These methods assume that $c(x)\equiv0$ and $p$ and $q$ are the indicator function of simple compact convex sets $X$ and $Y$, respectively. They also require the convexity of $V(x)=\max_{y\in Y} \{f(x,y): d(x,y)\leq0\}$, as well as the ability to compute an optimal Lagrangian multiplier associated with the constraint $d(x,y)\leq 0$ for each $x\in X$. Moreover, in \cite{dai2022rate}, an augmented Lagrangian (AL) method was recently proposed for solving \eqref{prob} with only equality constraints, $p(x)\equiv0$, $q(y)\equiv0$ and $c(x)\equiv0$.  This method assumes that a local min-max point of the AL subproblem can be found at each iteration. Furthermore, \cite{tsaknakis2023minimax}  introduced a multiplier gradient descent method for solving \eqref{prob} with $c(x)\equiv0$, $d(x,y)$ being an affine mapping, and $p$ and $q$ being the indicator function of a simple compact convex set. In addition, \cite{dai2024optimality} developed a proximal gradient multi-step ascent-decent method for problem \eqref{prob} with $c(x)\equiv0$, $d(x,y)$ being an affine mapping, and $f(x,y)=g(x)+x^TAy-h(y)$, assuming that $f(x,y)-q(y)$ is \emph{strongly concave} in $y$. Furthermore, primal dual alternating proximal gradient methods were proposed in \cite{zhang2022primal} for solving \eqref{prob} under the conditions of $c(x)\equiv0$, $d(x,y)$ being an affine mapping, and either $f(x,y)$ being strongly concave in $y$ or [$q(y)\equiv0$ and $f(x,y)$ being a linear function in $y$]\}. While the aforementioned studies \cite{dai2024optimality,goktas2021convex,zhang2022primal} established the iteration complexity of the methods for finding an approximate stationary point of a special minimax problem, the operation complexity, measured by fundamental operations such as gradient evaluations of $f$ and proximal operator evaluations of $p$ and $q$, was not studied in these works.
 
Recently, a first-order augmented Lagrangian (AL) method was proposed in \cite[Algorithm 3]{lu2024first} for solving a nonconvex-concave constrained minimax problem in the form of  \eqref{prob} in which $f(x,\cdot)$ is however merely concave for any given $x\in\mcX$. Under suitable assumptions, this method achieves an operation complexity of $\cO(\varepsilon^{-4}\log\varepsilon^{-1})$, measured by the amount of evaluations of $\nabla f$, $\nabla c$, $\nabla d$ and proximal operators of $p$ and $q$, for finding an $\varepsilon$-KKT solution of the problem. While this method is applicable to problem \eqref{prob}, it does not exploit the strong concavity structure of $f(x,\cdot)$. Consequently, it may not be the most efficient method for solving \eqref{prob}. 

In this paper, we propose a first-order AL method for solving problem \eqref{prob}. Our approach follows a similar framework as \cite[Algorithm 3]{lu2024first}, but we enhance it by leveraging the strong concavity of $f(x,\cdot)$. As a result, our method achieves a substantially improved operation complexity compared to \cite[Algorithm 3]{lu2024first}. Specifically, given an iterate $(x^k,y^k)$ and a Lagrangian multiplier estimate $(\lambda^k_\bfx,\lambda^k_\bfy)$ at the $k$th iteration, the next iterate $(x^{k+1},y^{k+1})$ of our method is obtained by finding an approximate stationary point of the AL subproblem
\beq \label{minimax-AL}
\min_x\max_y \AL(x,y,\lambda^k_\bfx,\lambda^k_\bfy;\rho_k)
\eeq 
for some $\rho_k>0$, 
where $\AL$ is the AL function of \eqref{prob} defined as
\beq\label{AL}
\AL(x,y,\lambda_\bfx,\lambda_\bfy;\rho)=F(x,y)+\frac{1}{2\rho}\left(\|[\lambda_\bfx+\rho c(x)]_+\|^2-\|\lambda_\bfx\|^2\right)-\frac{1}{2\rho}\left(\|[\lambda_\bfy+\rho d(x,y)]_+\|^2-\|\lambda_\bfy\|^2\right),
\eeq
which is a generalization of the AL function introduced in \cite{dai2022rate} for an equality constrained
minimax problem. The Lagrangian multiplier estimate is then updated by $\lambda_\bfx^{k+1}=\Pi_{\cB^+_\Lambda}(\lambda^k_\bfx+\rho_kc(x^{k+1}))$ and $\lambda^{k+1}_\bfy=[\lambda^k_\bfy+\rho_kd(x^{k+1},y^{k+1})]_+$  for some $\Lambda>0$, where $\Pi_{\cB^+_\Lambda}(\cdot)$ and $[\cdot]_+$ are defined in Subsection \ref{notation}. Given that  problem \eqref{minimax-AL} is a nonconvex-strongly-concave unconstrained minimax problem,  we develop an efficient first-order method for finding an approximate stationary point of it by utilizing its strong concavity structure.

\vspace{.1in}

The main contributions of this paper are summarized below.

\begin{itemize}
\item  We propose a first-order method for solving a nonconvex-strongly-concave unconstrained minimax problem. Under suitable assumptions, we show that this method achieves an operation complexity of $\cO(\varepsilon^{-2}\log\varepsilon^{-1})$, measured by its fundamental operations, for finding an $\varepsilon$-primal-dual stationary point of the problem, which improves the previous best-known operation complexity achieved by \cite[Algorithm 1]{lu2024first} by a factor of $\varepsilon^{-0.5}$.
\item  We propose a first-order AL method for solving nonconvex-strongly-concave constrained minimax problem \eqref{prob}. Under suitable assumptions, we show that this method achieves an operation complexity of $\cO(\varepsilon^{-3.5}\log\varepsilon^{-1})$, measured by its fundamental operations, for finding an $\varepsilon$-KKT solution of \eqref{prob}, which improves the previous best-known operation complexity achieved by \cite[Algorithm 3]{lu2024first} by a factor of $\varepsilon^{-0.5}$.
\end{itemize}

The rest of this paper is organized as follows. In Subsection \ref{notation}, we introduce some notation and terminology. In Section \ref{minimax}, we propose a first-order method for solving a nonconvex-concave minimax problem and study its complexity. In Section~\ref{sec:main}, we propose a first-order AL method for solving problem \eqref{prob} and present complexity results for it. Finally, we provide the proof of the main results in Section \ref{sec:proofs}.

\subsection{Notation and terminology}  \label{notation}
The following notation will be used throughout this paper. Let $\bR^n$ denote the Euclidean space of dimension $n$ and $\bR^n_+$ denote the nonnegative orthant in $\bR^n$. The standard inner product, $l_1$-norm and Euclidean norm are denoted by $\langle\cdot,\cdot\rangle$, $\|\cdot\|_1$ and $\|\cdot\|$, respectively. For any $\Lambda>0$, let $\cB^+_\Lambda =\{x \geq 0:\|x\|\leq \Lambda\}$, whose dimension is clear from the context. 
For any $v\in\bR^n$, let $v_+$ denote the nonnegative part of $v$, that is, $(v_+)_i=\max\{v_i,0\}$ for all $i$. 
 Given a point $x$ and a closed set $S$ in $\bR^n$, let 
$\dist(x,S)=\min_{x'\in S} \|x'-x\|$, $\Pi_{S}(x)$ denote the Euclidean projection of $x$ onto $S$, and $\cI_S$ denote the indicator function associated with $S$.

A function or mapping $\phi$ is said to be \emph{$L_{\phi}$-Lipschitz continuous} on a set $S$ if $\|\phi(x)-\phi(x')\| \leq L_{\phi} \|x-x'\|$ for all $x,x'\in S$. In addition, it is said to be \emph{$L_{\nabla\phi}$-smooth} on $S$ if $\|\nabla\phi(x)-\nabla\phi(x')\| \leq L_{\nabla\phi} \|x-x'\|$ for all $x,x'\in S$. 
A function is said to be $\sigma$-strongly-convex if it is strongly convex with modulus $\sigma>0$. For a closed convex function $p:\bR^n\to \bR\cup\{+\infty\}$, the \emph{proximal operator} associated with $p$ is denoted by  
$\prox_p$,  that is,
\[
\prox_p(x) = \argmin_{x'\in\bR^n} \left\{ \frac{1}{2}\|x' - x\|^2 + p(x') \right\} \quad \forall x \in \bR^n.
\]
Given that evaluation of $\prox_{\gamma p}(x)$ is often as cheap as $\prox_p(x)$, we count the evaluation of $\prox_{\gamma p}(x)$ as one evaluation of proximal operator of $p$ for any $\gamma>0$ and $x\in\bR^n$. 

For a lower semicontinuous function $\phi:\bR^n\to \bR\cup\{+\infty\}$, its \emph{domain} is the set $\dom\, \phi := \{x| \phi(x)<+\infty\}$. The \emph{upper subderivative} of $\phi$ at $x\in \dom\, \phi$ in a direction $d\in\bR^n$ is defined by
\[
\phi'(x;d) = \limsup\limits_{x' \stackrel{\phi}{\to} x,\, t \downarrow 0} \inf_{d' \to d} \frac{\phi(x'+td')-\phi(x')}{t},
\] 
where $t\downarrow 0$ means both $t > 0$ and $t\to 0$, and $x' \stackrel{\phi}{\to} x$ means both $x' \to x$ and $\phi(x')\to \phi(x)$. The \emph{subdifferential} of $\phi$ at $x\in \dom\, \phi$ is the set 
\[
\partial \phi(x) = \{s\in\bR^n\big| s^T d \leq \phi'(x; d) \ \ \forall d\in\bR^n\}.
\]
We use $\partial_{x_i} \phi(x)$ to denote the subdifferential with respect to $x_i$.  
In addition, for an upper semicontinuous function $\phi$, its subdifferential is defined as $\partial \phi=-\partial (-\phi)$. If $\phi$ is locally Lipschitz continuous, the above definition of subdifferential coincides with the Clarke subdifferential. Besides, if $\phi$ is convex, it coincides with the ordinary subdifferential for convex functions. Also, if $\phi$ is continuously differentiable at $x$ , we simply have $\partial \phi(x) = \{\nabla \phi(x)\}$, where $\nabla \phi(x)$ is the gradient of $\phi$ at $x$. In addition, it is not hard to verify that $\partial (\phi_1+\phi_2)(x)=\nabla \phi_1(x)+\partial \phi_2(x)$ if $\phi_1$ is continuously differentiable at $x$ and $\phi_2$ is lower or upper semicontinuous at $x$. See \cite{clarke1990optimization,ward1987nonsmooth} for more details.

Finally, we introduce an (approximate) primal-dual stationary point (e.g., see \cite{dai2024optimality,dai2020optimality,kong2021accelerated}) for a general minimax problem
\begin{equation}\label{eg}
\min_{x}\max_{y}\Psi(x,y),
\end{equation}
where $\Psi(\cdot,y): \bR^n \to \bR \cup\{+\infty\}$ is a lower semicontinuous function, and $\Psi(x,\cdot): \bR^m \to \bR \cup\{-\infty\}$ is an upper semicontinuous function.

\begin{defi}  \label{def2}
A point $(x,y)$ is said to be a primal-dual stationary point of the minimax problem \eqref{eg} if 
\[
0 \in \partial_x\Psi(x,y), \quad 0\in\partial_y\Psi(x,y).
\]
In addition, for any $\epsilon>0$, a point $(\xe,\ye)$ is said to be an $\epsilon$-primal-dual stationary point of the minimax problem \eqref{eg} if
\begin{equation*}
\dist\left(0,\partial_x\Psi(\xe,\ye)\right)\leq\epsilon,\quad\dist\left(0,\partial_y\Psi(\xe,\ye)\right)\leq\epsilon.
\end{equation*}
\end{defi}

One can see that $(\xe,\ye)$ is an $\epsilon$-primal-dual stationary point of  \eqref{eg} if and only if 
$\xe$ and $\ye$ are an $\epsilon$-stationary point of $\min_{x}\Psi(x,\ye)$ and $\max_{y}\Psi(\xe,y)$, respectively. 

\section{A first-order method for nonconvex-strongly-concave unconstrained minimax optimization} 
\label{minimax}

In this section, we propose a first-order method for finding an $\epsilon$-primal-dual stationary point of a  nonconvex-strongly-concave unconstrained minimax problem, which will be used as a subproblem solver for the first-order AL method proposed in Section \ref{sec:main}. In particular, we consider a nonconvex-strongly-concave minimax problem
\begin{equation}\label{mmax-prob}
\bH^* = \min_x\max_y\left\{\bH(x,y)\coloneqq \bh(x,y)+\np(x)-\nq(y)\right\}.
\end{equation}
Assume that problem \eqref{mmax-prob} has at least one optimal solution and $p, q$ satisfy Assumption \ref{a1}.  In addition, $\bh$ satisfies the following assumption.

\begin{assumption}\label{mmax-a}
The function $\bh$ is $L_{\nabla\bh}$-smooth on $\mcX\times\mcY$, and moreover, $h(x,\cdot)$ is $\sigma_y$-strongly-concave for some constant $\sigma_y>0$ for all $x\in\mcX$, where $\mcX:=\dom\,p$ and $\mcY:=\dom\,q$.
\end{assumption}

Several first-order methods  have been developed for special classes of \eqref{mmax-prob} with $p, q$ being the indicator function of convex compact sets or entire spaces, and they enjoy  an operation complexity of $\cO(\epsilon^{-2}\log \epsilon^{-1})$, measured by the amount of evaluations of $\nabla \bh$ and proximal operators of $p$ and $q$, for finding an $\epsilon$-primal-dual stationary point of  \eqref{mmax-prob} with such $p$ and $q$ (e.g., see \cite{lin2020near,Yang20}).  They are however not applicable to \eqref{mmax-prob} in general. 

We now propose a first-order method for problem \eqref{mmax-prob} by solving a sequence of subproblems
\beq\label{ppa-subprob}
 \min_x\max_y\left\{\bH_k(x,y):=\bh_k(x,y)+\np(x)-\nq(y)\right\}, 
 \eeq
which result from applying an inexact proximal point method \cite{kaplan1998proximal} to the minimization problem \\ 
$\min_x\{\max_y \bh(x,y)+\np(x)-\nq(y)\}$,  where 
 \begin{equation}\label{mmax-sub}
\bh_k(x,y)=\bh(x,y)+L_{\nabla \bh}\|x-x^k\|^2,
\end{equation}
and $x^k$ is an approximate $x$-solution of \eqref{ppa-subprob} with $k$ replaced by $k-1$. 
By Assumption \ref{mmax-a}, one can observe that (i) $\bh_k$ is $L_{\nabla \bh}$-strongly convex in $x$ and $\sigma_y$-strongly concave in $y$ on $\dom\,\np\times\dom\,\nq$; (ii) $\bh_k$ is $3L_{\nabla \bh}$-smooth on $\dom\,\np\times\dom\,\nq$. Consequently, problem \eqref{ppa-subprob} is a special case of \eqref{ea-prob} and can be suitably solved by Algorithm~\ref{mmax-alg1} (see Appendix \ref{strong-cvx-ccv}). The resulting first-order method for \eqref{mmax-prob} is presented in Algorithm \ref{mmax-alg2}.

\begin{algorithm}[H]
\caption{A first-order method for problem~\eqref{mmax-prob}}
\label{mmax-alg2}
\begin{algorithmic}[1]
\REQUIRE $\epsilon>0$, $\hat\epsilon_0\in(0,\epsilon/2]$, 
$(\hat x^0,\hat y^0)\in\dom\,\np\times\dom\,\nq$, $(x^0,y^0)=(\hat x^0,\hat y^0)$, 
and $\hat\epsilon_k=\hat\epsilon_0/(k+1)$.
\FOR{$k=0,1,2,\ldots$}
\STATE Call Algorithm~\ref{mmax-alg1} (see Appendix \ref{strong-cvx-ccv}) with $\h\leftarrow \bh_k$, $\bar\epsilon \leftarrow \hat\epsilon_k$, $\sigma_x\leftarrow L_{\nabla \bh}$, $\sigma_y\leftarrow \sigma_y$, $L_{\nabla \h}\leftarrow 3L_{\nabla \bh}$, $\bar z^0=z^0_f\leftarrow-\sigma_x x^k$, $\bar y^0=y^0_f\leftarrow y^k$, and denote its output by $(x^{k+1},y^{k+1})$, where $\bh_k$ is given in \eqref{mmax-sub}.
\STATE Terminate the algorithm and output $(\xe,\ye)=(x^{k+1},y^{k+1})$ if
\begin{equation}\label{mmax-term}
\|x^{k+1}-x^k\|\leq\epsilon/(4L_{\nabla \bh}).
\end{equation}
\ENDFOR
\end{algorithmic}
\end{algorithm}
\begin{rem}\label{ppa-rem}
It is seen from step 2 of Algorithm~\ref{mmax-alg2} that $(x^{k+1},y^{k+1})$ results from applying Algorithm~\ref{mmax-alg1} to the subproblem \eqref{ppa-subprob}. As will be shown in Lemma~\ref{innercplx-alg6}, $(x^{k+1},y^{k+1})$ is an $\hat\epsilon_k$-primal-dual stationary point of \eqref{ppa-subprob}.   
\end{rem}

We next study complexity of Algorithm \ref{mmax-alg2} for finding an $\epsilon$-primal-dual stationary point of problem~\eqref{mmax-prob}. Before proceeding, we define
\begin{align}
&D_\bfx\coloneqq \max\{\|u-v\|\big|u,v\in\mcX\},\quad D_\bfy\coloneqq\max\{\|u-v\|\big|u,v\in\mcY\}, \label{mmax-D}\\
&\bH_{\rm low}:=\min\left\{\bH(x,y)|(x,y)\in\dom\,\np\times\dom\,\nq\right\}.\label{mmax-bnd}
\end{align}
By Assumption~\ref{a1}, one can observe that $\bH_{\rm low}$ is finite.

The following theorem presents \emph{iteration and operation complexity} of Algorithm~\ref{mmax-alg2} for finding an $\epsilon$-primal-dual stationary point of problem \eqref{mmax-prob}, whose proof is deferred to Subsection \ref{sec:proof2-2}.

\begin{thm}[{\bf Complexity of Algorithm \ref{mmax-alg2}}]\label{mmax-thm}
Suppose that Assumption~\ref{mmax-a} holds. Let $\bH^*$, $H$ $D_\bfx$, $D_\bfy$, and $\bH_{\rm low}$ be defined in \eqref{mmax-prob}, 
\eqref{mmax-D} and \eqref{mmax-bnd}, $L_{\nabla \bh}$ be given in Assumption \ref{mmax-a}, $\epsilon$, $\hat\epsilon_0$ and $\hat x^0$ be given in Algorithm~\ref{mmax-alg2}, and 
\begin{align}
\halpha=&\ \min\left\{1,\sqrt{8\sigma_y/L_{\nabla \bh}}\right\},\label{mmax-balpha}\\
\hdelta=&\ (2+\halpha^{-1})L_{\nabla \bh} D_\bfx^2+\max\left\{2\sigma_y,\halpha L_{\nabla \bh}/4\right\}D_\bfy^2,\label{mmax-tP}\\
\wT=&\ \left\lceil16(\max_y\bH(\hat x^0,y)-\bH^*)L_{\nabla \bh}\epsilon^{-2}+32\hat\epsilon_0^2(1+\sigma_y^{-2}L_{\nabla \bh}^2)\epsilon^{-2}-1\right\rceil_+,\label{mmax-K}\\
\widehat N=&\ 3397\max\left\{2,\sqrt{L_{\nabla \bh}/(2\sigma_y)}\right\}\notag\\
&\ \times\Bigg[(\wT+1)\Bigg(\log\frac{4\max\left\{\frac{1}{2L_{\nabla \bh}},\min\left\{\frac{1}{2\sigma_y},\frac{4}{\halpha L_{\nabla \bh}}\right\}\right\}\left(\hdelta+2\halpha^{-1}(\bH^*-\bH_{\rm low}+L_{\nabla \bh} D_\bfx^2)\right)}{(9L_{\nabla \bh}^2/\min\{L_{\nabla \bh},\sigma_y\}+ 3L_{\nabla \bh})^{-2}\hat\epsilon_0^2}\Bigg)_+\notag\\
&\qquad\, +\wT+1+2\wT\log(\wT+1) \Bigg].\label{mmax-N-old}
\end{align}
Then Algorithm~\ref{mmax-alg2} terminates and outputs an $\epsilon$-primal-dual stationary point $(\xe,\ye)$ of \eqref{mmax-prob} in at most $\wT+1$ outer iterations that satisfies 
\begin{equation}\label{upperbnd-old}
\max_y\bH(\xe,y)\leq \max_y\bH(\hat x^0,y)+2\hat\epsilon_0^2\left(L_{\nabla \bh}^{-1}+\sigma_y^{-2}L_{\nabla \bh}\right).
\end{equation}
Moreover, the total number of evaluations of $\nabla \bh$ and proximal operators of $p$ and $q$ performed in Algorithm~\ref{mmax-alg2} is no more than $\widehat N$, respectively.
\end{thm}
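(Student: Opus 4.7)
The plan is to analyze Algorithm~\ref{mmax-alg2} as an inexact proximal point method applied to the outer function
\[
\Phi(x) \coloneqq \max_y\{\bh(x,y) - \nq(y)\} + \np(x),
\]
so that each outer iteration amounts to inexactly minimizing the proximal regularization $\widetilde\Phi_k(x) \coloneqq \Phi(x) + L_{\nabla\bh}\|x-x^k\|^2$, whose minimax representation is exactly \eqref{ppa-subprob}. The first step is to invoke the convergence guarantee for Algorithm~\ref{mmax-alg1} to establish Remark~\ref{ppa-rem}: the inner call returns an $\hat\epsilon_k$-primal-dual stationary point $(x^{k+1},y^{k+1})$ of \eqref{ppa-subprob} at a per-call cost of $\cO(\sqrt{L_{\nabla\bh}/\sigma_y}\log(1/\hat\epsilon_k))$ evaluations of $\nabla\bh$ and of the proximal operators of $\np,\nq$, with the implicit constants traceable to the initial distance bound via \eqref{mmax-D} and the extra $L_{\nabla\bh}D_\bfx^2$ term in $\bh_k-\bh$.

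The heart of the proof is a per-iteration descent inequality
\[
\Phi(x^{k+1}) + L_{\nabla\bh}\|x^{k+1}-x^k\|^2 \;\le\; \Phi(x^k) + \frac{\hat\epsilon_k^2(1+L_{\nabla\bh}/\sigma_y)^2}{2L_{\nabla\bh}}.
\]
I would derive this in four substeps. First, the $y$-component of $\hat\epsilon_k$-primal-dual stationarity combined with $\sigma_y$-strong concavity of $\bh_k(x^{k+1},\cdot)-\nq(\cdot)$ yields $\|y^{k+1}-y^*(x^{k+1})\|\le\hat\epsilon_k/\sigma_y$, where $y^*(x)\coloneqq\argmax_y\{\bh(x,y)-\nq(y)\}$ (the $y$-maximizer is unchanged when passing from $\bh$ to $\bh_k$ since the added term is independent of $y$). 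Second, Danskin's theorem gives $\partial\widetilde\Phi_k(x^{k+1}) = \nabla_x\bh_k(x^{k+1},y^*(x^{k+1})) + \partial\np(x^{k+1})$, which combined with the $L_{\nabla\bh}$-Lipschitzness of $\nabla_x\bh$ in $y$ and the $x$-component of $\hat\epsilon_k$-primal-dual stationarity produces $\dist(0,\partial\widetilde\Phi_k(x^{k+1}))\le\hat\epsilon_k(1+L_{\nabla\bh}/\sigma_y)$. Third, $\widetilde\Phi_k$ is $L_{\nabla\bh}$-strongly convex, since $\bh_k(\cdot,y)-\nq(y)$ is $L_{\nabla\bh}$-strongly convex in $x$ for every $y$ and the pointwise supremum of such functions preserves this modulus; the standard subgradient error bound then gives $\widetilde\Phi_k(x^{k+1}) - \min\widetilde\Phi_k \le \hat\epsilon_k^2(1+L_{\nabla\bh}/\sigma_y)^2/(2L_{\nabla\bh})$. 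Fourth, $\min\widetilde\Phi_k \le \widetilde\Phi_k(x^k) = \Phi(x^k)$.

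Telescoping from $k=0$ to $\wT$, together with $\sum_k\hat\epsilon_k^2 \le \hat\epsilon_0^2\pi^2/6 \le 2\hat\epsilon_0^2$, $(1+L_{\nabla\bh}/\sigma_y)^2\le 2(1+L_{\nabla\bh}^2/\sigma_y^2)$, and $\Phi(x^{\wT+1})\ge\bH^*$, produces
\[
L_{\nabla\bh}\sum_{k=0}^{\wT}\|x^{k+1}-x^k\|^2 \le \Phi(\hat x^0) - \bH^* + 2\hat\epsilon_0^2\bigl(L_{\nabla\bh}^{-1}+L_{\nabla\bh}\sigma_y^{-2}\bigr).
\]
The pigeonhole principle together with the choice of $\wT$ in \eqref{mmax-K} then forces $\min_{0\le k\le\wT}\|x^{k+1}-x^k\|^2 \le \epsilon^2/(16L_{\nabla\bh}^2)$, so the termination criterion \eqref{mmax-term} is triggered within $\wT+1$ outer iterations; the same telescoping, read at the terminating index and combined with $\Phi(\hat x^0)=\max_y\bH(\hat x^0,y)$ and $\Phi(\xe)\ge\max_y\bH(\xe,y)$, delivers \eqref{upperbnd-old}. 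To promote the terminating $\hat\epsilon_k$-primal-dual stationarity of $\bH_k$ into $\epsilon$-primal-dual stationarity of $\bH$ at $(\xe,\ye)$, I would use $\nabla_x\bh_k(x,y)-\nabla_x\bh(x,y)=2L_{\nabla\bh}(x-x^k)$ together with \eqref{mmax-term} and $\hat\epsilon_k\le\hat\epsilon_0\le\epsilon/2$ on the $x$-side, while $\nabla_y\bh_k\equiv\nabla_y\bh$ handles the $y$-side directly.

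Finally, the total evaluation count $\widehat N$ is obtained by summing the per-call inner complexity of Algorithm~\ref{mmax-alg1} over $k=0,\ldots,\wT$. The condition-number factor $\sqrt{L_{\nabla\bh}/\sigma_y}$ yields the leading coefficient, while $\sum_{k=0}^{\wT}\log((k+1)/\hat\epsilon_0) = (\wT+1)\log(1/\hat\epsilon_0) + \log((\wT+1)!)$ accounts for both the $(\wT+1)\log(\cdot/\hat\epsilon_0)$ and $2\wT\log(\wT+1)$ terms in \eqref{mmax-N-old}, and the $\hdelta,\halpha$-dependent quantities inside the logarithm come from the initialization error bound of Algorithm~\ref{mmax-alg1} under the warm start $(-\sigma_x x^k, y^k)$. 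I expect the main obstacles to be (a) keeping the constants in the third substep of the descent lemma sharp enough to recover the explicit factor $2\hat\epsilon_0^2(L_{\nabla\bh}^{-1}+\sigma_y^{-2}L_{\nabla\bh})$ in \eqref{upperbnd-old}, and (b) assembling Algorithm~\ref{mmax-alg1}'s per-call complexity, with its $\hdelta,\halpha$ dependence, into the exact expression \eqref{mmax-N-old}.
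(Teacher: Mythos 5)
Your proposal is correct and follows essentially the same route as the paper: it treats Algorithm~\ref{mmax-alg2} as an inexact proximal point method on $\Phi(x)=\max_y \bH(x,y)$, converts the $\hat\epsilon_k$-primal-dual stationarity of the subproblem into an approximate stationarity bound for the $L_{\nabla\bh}$-strongly convex regularized value function (your $\widetilde\Phi_k$ is exactly the paper's $H^*_k$), telescopes the resulting descent inequality, and applies pigeonhole plus the subgradient-transfer and inner-complexity-summation arguments. The only cosmetic difference is that you pass through the quadratic-growth error bound at the minimizer of $\widetilde\Phi_k$ rather than writing the strong-convexity inequality directly between $x^k$ and $x^{k+1}$ with Young's inequality, and your constant $(1+L_{\nabla\bh}/\sigma_y)^2/(2L_{\nabla\bh})$ is dominated by the paper's $L_{\nabla\bh}^{-1}+\sigma_y^{-2}L_{\nabla\bh}$, so \eqref{upperbnd-old} and \eqref{mmax-K} are recovered with the stated constants.
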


\begin{rem}
One can observe from Theorem~\ref{mmax-thm} that $\halpha = \cO(\kappa^{-1/2})$, $\hdelta = \cO(\kappa^{1/2})$, $\wT = \cO(\epsilon^{-2})$, and $\widehat N = \cO(\kappa^{1/2}\epsilon^{-2} \log \hat\epsilon_0^{-1})$, where $\kappa = L_{\nabla h} / \sigma_y$ is the condition number of the maximization part. Consequently, by setting $\hat\epsilon_0 = \epsilon/2$, Algorithm~\ref{mmax-alg2} achieves an operation complexity of $\cO(\kappa^{1/2} \epsilon^{-2} \log \epsilon^{-1})$, measured by the number of evaluations of $\nabla \bh$ and the proximal operators of $p$ and $q$, for computing an $\epsilon$-primal-dual stationary point of the nonconvex–strongly-concave minimax problem~\eqref{mmax-prob}. This improves the best-known complexity bound previously obtained by \cite[Algorithm 1]{lu2024first} by a factor of $\epsilon^{-1/2}$. In addition, an alternating gradient projection (AGP) method was recently proposed in \cite{xu2023unified} for a subclass of unconstrained minimax problems of the form~\eqref{mmax-prob}, specifically those where $p$ and $q$ are indicator functions of convex compact sets. A complexity bound is established for AGP in terms of the norm of a gradient mapping, which has slightly better dependence on $\epsilon$ (up to a logarithmic factor) than our result. However, it has significantly worse dependence on the condition number $\kappa$ due to the lack of an acceleration scheme in AGP.
\end{rem}

\section{A first-order augmented Lagrangian method for nonconvex-strongly-concave constrained minimax optimization}\label{sec:main}

In this section, we propose a first-order augmented Lagrangian (FAL) method in Algorithm \ref{AL-alg} for problem \eqref{prob}, and study its complexity for finding an approximate KKT point of \eqref{prob}. The proposed FAL method follows a similar framework as \cite[Algorithm 3]{lu2024first}. Specifically, at each iteration, the FAL method finds an approximate primal-dual stationary point of an AL subproblem in the form of
\beq\label{AL-sub0}
\min_x\max_y\AL(x,y,\lambda_\bfx,\lambda_\bfy;\rho),
\eeq
where $\AL$ is the AL function associated with problem \eqref{prob} defined in \eqref{AL}, $\lambda_\bfx\in\bR_+^{\tn}$ and $\lambda_\bfy\in\bR_+^{\tm}$ are Lagrangian multiplier estimates, and $\rho>0$ is a penalty parameter, which are updated by a standard scheme. By Assumption \ref{a1}, it is not hard to observe that \eqref{AL-sub0} is a special case of nonconvex-strongly-concave unconstrained minimax problem \eqref{mmax-prob}. Consequently, our FAL method applies Algorithm \ref{mmax-alg2} to find an approximate primal-dual stationary point of \eqref{AL-sub0}.

Before presenting the FAL method for \eqref{prob}, we let
\begin{align}
&\AL_\bfx(x,y,\lambda_\bfx;\rho):=F(x,y)+\frac{1}{2\rho}\left(\|[\lambda_\bfx+\rho c(x)]_+\|^2-\|\lambda_\bfx\|^2\right),
\nn \\
&c_{\rm hi}:=\max\{\|c(x)\|\big|x\in\mcX\},\quad d_{\rm hi}:=\max\{\|d(x,y)\|\big|(x,y)\in\mcX\times\mcY\}, \label{cdhi} 
\end{align}
where $\AL_\bfx(\cdot,y,\lambda_\bfx;\rho)$ can be viewed as the AL function for the minimization part of \eqref{prob}, namely, the problem $\min_x \{F(x,y)| c(x)\leq 0\}$ for any $y\in\mcY$. Besides, we make one additional assumption below regarding the availability of a nearly feasible point for the minimization part of \eqref{prob}. Given the possible nonconvexity of $c_i$'s, it will be used to specify an initial point for solving the AL subproblems (see step 2 of Algorithm \ref{AL-alg}) so that the resulting FAL method outputs an approximate KKT point of  \eqref{prob} nearly satisfying the constraint $c(x) \leq 0$.

\begin{assumption}\label{knownfeas}		
For any given $\varepsilon\in (0,1)$, a $\sqrt{\varepsilon}$-nearly feasible point $x_\bff$ of problem~\eqref{prob}, namely $x_\bff\in\mcX$ satisfying $\|[c(x_\bff)]_+\|\le\sqrt{\varepsilon}$, can be found.	
\end{assumption}

\begin{rem}
A very similar assumption as Assumption~\ref{knownfeas} was considered in \cite{CGLY17,GY19,lu2024first,LZ12,XW19}. In addition,  when the error bound condition $\|[c(x)]_+\|=\cO(\dist(0,\partial (\|[c(x)]_+\|^2+\cI_{\mcX}(x))))^\nu)$ holds on a level set of $\|[c(x)]_+\|$ for some $\nu>0$, Assumption~\ref{knownfeas} holds for problem~\eqref{prob} (e.g., see \cite{lu2022single,S19iAL}). In this case, one can find the above $x_\bff$ by applying a projected gradient method to the problem $\min_{x\in\mcX}\|[c(x)]_+\|^2$.
\end{rem}

 We are now ready to present the aforementioned FAL method for solving problem \eqref{prob}.
 
\begin{algorithm}[H]
\caption{A first-order augmented Lagrangian method for problem \eqref{prob}}\label{AL-alg}
\begin{algorithmic}[1]
\REQUIRE $\varepsilon, \tau\in(0,1)$, $\epsilon_k=\tau^k$, $\rho_k=\epsilon_k^{-1}$, $\Lambda>0$, $\lambda_\bfx^0\in\cB^+_\Lambda$, $\lambda_\bfy^0\in\bR_+^{\tm}$, $(x^0,y^0)\in\dom\,p\times\dom\,q$, and $x_\bff\in\dom\,p$ with $\|[c(x_\bff)]_+\|\leq\sqrt{\varepsilon}$. 
\FOR{$k=0,1,\dots$}
\STATE Set
\begin{align*} \label{xinit}
x^k_{\rm init}=\left\{\begin{array}{ll}
x^k,&\quad \mbox{if }\AL_\bfx(x^k,y^k,\lambda^k_\bfx;\rho_k)\leq\AL_\bfx(x_\bff,y^k,\lambda^k_\bfx;\rho_k),\\
x_\bff,&\quad \mbox{otherwise.}
\end{array}\right.
\end{align*}
\STATE Call Algorithm \ref{mmax-alg2} with $\epsilon\leftarrow\epsilon_k$, $\hat\epsilon_0\leftarrow\epsilon_k/2$, $(x^0,y^0)\leftarrow (x^k_{\rm init},y^k)$, $\sigma_y\leftarrow\sigma$ and $L_{\nabla h}\leftarrow L_k$ to find an $\epsilon_k$-primal-dual stationary point $(x^{k+1},y^{k+1})$ of 
\beq\label{AL-sub}
\min_x\max_y\AL(x,y,\lambda^k_\bfx,\lambda^k_\bfy;\rho_k)
\eeq
 where
\beq\label{Lk}
L_k= L_{\nabla f}+\rho_kL_c^2+\rho_kc_{\rm hi}L_{\nabla c}+\|\lambda^k_\bfx\|L_{\nabla c}+\rho_kL_d^2+\rho_kd_{\rm hi}L_{\nabla d}+\|\lambda^k_\bfy\|L_{\nabla d}.
\eeq
\STATE Set $\lambda_\bfx^{k+1}=\Pi_{\cB^+_\Lambda}(\lambda^k_\bfx+\rho_kc(x^{k+1}))$ and $\lambda^{k+1}_\bfy=[\lambda^k_\bfy+\rho_kd(x^{k+1},y^{k+1})]_+$.
\STATE If $\epsilon_k\leq\varepsilon$, terminate the algorithm and output $(x^{k+1},y^{k+1})$. 
\ENDFOR
\end{algorithmic}
\end{algorithm}

\begin{rem}
\begin{enumerate}[label=(\roman*)]	
\item $\lambda^{k+1}_\bfx$ results from projecting onto a nonnegative Euclidean ball the standard Lagrangian multiplier estimate $\tilde\lambda_\bfx^{k+1}$ obtained by the classical scheme $\tilde\lambda_\bfx^{k+1}=[\lambda^k_\bfx+\rho_kc(x^{k+1})]_+$. It is called a safeguarded Lagrangian multiplier in the relevant literature \cite{BM14,BM20,KS17example}, which has been shown to enjoy many practical and theoretical advantages (see \cite{BM14} for discussions).
\item In view of Theorem \ref{mmax-thm}, one can see that an $\epsilon_k$-primal-dual stationary point of \eqref{AL-sub} can be successfully found in step 3 of Algorithm~\ref{AL-alg} by applying Algorithm \ref{mmax-alg2} to problem \eqref{AL-sub}. Consequently, Algorithm~\ref{AL-alg} is well-defined.
\end{enumerate}
\end{rem}

In the remainder of this section, we study iteration and operation complexity for Algorithm \ref{AL-alg}. Recall that $\mcX=\dom\,p$ and $\mcY=\dom\,q$. To proceed, we make one additional assumption that a generalized Mangasarian-Fromowitz constraint qualification (GMFCQ) holds for the minimization part of \eqref{prob},
a uniform Slater's condition holds for the maximization part of \eqref{prob}, and $F(\cdot,y)$ is Lipschitz continuous on $\mcX$ for any $y\in\mcY$. Specifically, GMFCQ and the Lipschitz continuity of $F(\cdot,y)$ will be used to bound the amount of violation on feasibility and complementary slackness by $(x^{k+1}, \tl^{k+1}_\bfx)$ for the minimization part of \eqref{prob} with $\tl^{k+1}_\bfx=[\lambda^k_\bfx+\rho_kc(x^{k+1})]_+$ (see Lemma \ref{l-xcnstr2}). Likewise, the uniform Slater's condition will be used to bound the amount of violation on feasibility and complementary slackness by $(x^{k+1}, y^{k+1},\lambda^{k+1}_\bfy)$ for the maximization part of \eqref{prob} (see Lemmas \ref{l-ycnstr} and \ref{l-subdcnstr}). 

\begin{assumption}\label{mfcq}
\bi
\item[(i)] There exist some constants $\delta_c$, $\theta>0$ such that for each $x\in\cF(\theta)$ there exists some $v_x \in\mcT_{\mcX}(x)$  satisfying $\|v_x\|=1$ and $v^T_x\nabla c_i(x)\leq-\delta_c$ for all $i\in\cA(x;\theta)$, where $\mcT_{\mcX}(x)$ is the tangent cone of $\mcX$ at $x$, and
\beq\label{def-cAcS}
\cF(\theta)=\{x\in\mcX\big|\|[c(x)]_+\|\leq\theta\},\quad\cA(x;\theta)=\{i|c_i(x)\geq-\theta,\ 1\leq i\leq \tn\}.
\eeq
\item[(ii)] For each $x\in\mcX$, there exists some $\hat y_x\in\mcY$ such that $d_i(x,\hat y_x)<0$ for all $i=1,2,\dots,\tm$, and moreover, $\delta_d:=\inf\{-d_i(x,\hat y_x)|x\in\mcX,\ i=1,2,\dots,\tm\}>0$.
\item[(iii)] $F(\cdot,y)$ is $L_F$-Lipschitz continuous on $\mcX$ for any $y\in\mcY$.
\ei
\end{assumption}

\begin{rem}
\bi
\item[(i)] Assumption \ref{mfcq}(i) can be viewed as a robust counterpart of MFCQ.  It implies that MFCQ holds for all the minimization problems, resulting from the minimization part of \eqref{prob} by fixing  $y\in\mcY$ and perturbing $c_i(x)$ at most by $\theta$.
\item[(ii)] The latter part of Assumption \ref{mfcq}(ii) can be weakened to the one that the pointwise Slater's condition holds for the constraint on $y$ in \eqref{prob}, that is, there exists $\hat y_x\in\mcY$ such that $d(x,\hat y_x)<0$ for each $x\in\mcX$. Indeed, if $\delta_d>0$, Assumption \ref{mfcq}(ii) holds. Otherwise, one can solve the perturbed counterpart of \eqref{prob} with $d(x,y)$ being replaced by $d(x,y)-\epsilon$ for some suitable $\epsilon>0$ instead, which satisfies Assumption \ref{mfcq}(ii).
\item[(iii)] In view of Assumption \ref{a1},  one can observe that  if $p$ is Lipschitz continuous on $\mcX$, $F(\cdot,y)$ is Lipschitz continuous on $\mcX$ for any $y\in\mcY$.  Thus, Assumption \ref{mfcq}(iii) is mild.
\ei
\end{rem}

In addition, to characterize the approximate solution found by Algorithm \ref{AL-alg}, we review a notion so-called an $\varepsilon$-KKT solution of problem \eqref{prob}, which was introduced in \cite[Definition 2]{lu2024first}.

\begin{defi} \label{approx-kkt-pt}
For any $\varepsilon>0$, $(x,y)$ is said to be an $\varepsilon$-KKT point of problem \eqref{prob} if there exists $(\lambda_\bfx,\lambda_\bfy)\in\bR^{\tn}_+\times\bR^{\tm}_+$ such that
\begin{align*}
& \dist(0, \partial_x F(x,y)+\nabla c(x)\lambda_\bfx-\nabla_x d(x,y) \lambda_\bfy) \leq \varepsilon,   \\
& \dist(0, \partial_y F(x,y)-\nabla_y d(x,y) \lambda_\bfy) \leq \varepsilon,   \\
& \|[c(x)]_+\| \leq  \varepsilon, \quad |\langle \lambda_\bfx, c(x) \rangle|\leq \varepsilon,  \\
& \|[d(x,y)]_+\| \leq  \varepsilon, \quad |\langle \lambda_\bfy, d(x,y) \rangle| \leq \varepsilon. 
\end{align*}
\end{defi} 

Recall that $\mcX=\dom\,p$ and $\mcY=\dom\,q$. To study complexity of Algorithm \ref{AL-alg}, we define
\begin{align}
&f^*(x):=\max\{F(x,y)|d(x,y)\leq0\},\label{fstarx}\\
&F_{\rm hi}:=\max\{F(x,y)|(x,y)\in\mcX\times\mcY\},\quad F_{\rm low}:=\min\{F(x,y)|(x,y)\in\mcX\times\mcY\},\label{Fhi}\\
&\Delta:=F_{\rm hi}-F_{\rm low}, \quad  r:=2\delta_d^{-1}\Delta,\label{def-r} \\
&K:=\left\lceil\log\varepsilon/\log\tau\right\rceil_+, \quad \bbK:=\{0,1,\ldots, K+1\}, \label{K1} 
\end{align}
where $\delta_d$ is given in Assumption \ref{mfcq}, and $\varepsilon$ and $\tau$ are some input parameters of Algorithm \ref{AL-alg}. For convenience, we define $\bbK-1=\{k-1| k\in\bbK\}$. One can observe from Assumption \ref{a1} that $F_{\rm hi}$ and $F_{\rm low}$ are finite. Besides, one can easily observe that 
\beq \label{F-gap}
f^*(x)\geq F_{\rm low}, \   F(x,y)-f^*(x)\leq \Delta \quad \forall x\in\mcX, y\in\mcY.
\eeq

We are now ready to present an \emph{iteration and operation complexity} of Algorithm~\ref{AL-alg} for finding an $\cO(\varepsilon)$-KKT solution of problem \eqref{prob}, whose proof is deferred to Section \ref{sec:proofs}.

\begin{thm}\label{complexity}
Suppose that Assumptions \ref{a1}, \ref{knownfeas} and \ref{mfcq} hold. Let $\{(x^k,y^k,\lambda^k_\bfx,\lambda^k_\bfy)\}_{k\in\bbK}$ be generated by Algorithm \ref{AL-alg}, $D_\bfx$, $D_\bfy$, $c_{\rm hi}$, $d_{\rm hi}$, $\Delta$ and $K$ be defined in  \eqref{mmax-D}, \eqref{cdhi}, \eqref{def-r} and \eqref{K1}, $L_F$, $L_{\nabla f}$, $L_{\nabla d}$, $L_{\nabla c}$, $L_c$, $L_{\nabla d}$, $L_d$, $\delta_c$, $\delta_d$ and $\theta$  be given in Assumptions \ref{a1} and \ref{mfcq}, $\varepsilon$, $\tau$, $\Lambda$ and $\lambda_\bfy^0$ be given in Algorithm \ref{AL-alg}, and
\begin{align}
&L= L_{\nabla f}+L_c^2+c_{\rm hi}L_{\nabla c}+\Lambda L_{\nabla c}+L_d^2+d_{\rm hi}L_{\nabla d}+L_{\nabla d}\sqrt{\|\lambda_\bfy^0\|^2+\frac{2(\Delta+D_\bfy)}{1-\tau}},\label{hL}\\
&\alpha=\min\left\{1, \sqrt{8\sigma/L}\right\},\quad \delta= (2+\alpha^{-1})L D_\bfx^2+\max\{2\sigma,L/4\}D_\bfy^2,\label{ho}\\
&M=16\max\left\{1/(2L_c^2),4/(\alpha L_c^2)\right\}\left[81/\min\{L_c^2,\sigma\}+ 3L\right]^2\nn\\
&\ \ \ \ \ \ \times\left(\delta+2\alpha^{-1}\Big(\Delta+\frac{\Lambda^2}{2}+\frac{3}{2}\|\lambda_\bfy^0\|^2+\frac{3(\Delta+D_\bfy)}{1-\tau}+\rho_kd_{\rm hi}^2+L D_\bfx^2\Big)\right),\label{hM}\\
&T= \Bigg\lceil16\left(2\Delta+\Lambda+\frac{1}{2}(\tau^{-1}+\|\lambda_\bfy^0\|^2)+\frac{\Delta+D_\bfy}{1-\tau}+\frac{\Lambda^2}{2}\right) L+8(1+\sigma^{-2}L^2)\Bigg\rceil_+, \label{hT}\\
&\tlambda^{K+1}_\bfx = [\lambda^K_\bfx+c(x^{K+1})/\tau^K]_+.\label{tlx}
\end{align}
Suppose that 
\begin{align}
\varepsilon^{-1} \geq \max\Bigg\{&1, \theta^{-1}\Lambda, \theta^{-2}\Big\{4\Delta+2\Lambda+\tau^{-1}+\|\lambda_\bfy^0\|^2+\frac{2(\Delta+D_\bfy)}{1-\tau} \nn \\
& +L_c^{-2} +\sigma^{-2}L+\Lambda^2\Big\}, \frac{4\|\lambda_\bfy^0\|^2}{\delta_d^2\tau}+\frac{8(\Delta+D_\bfy)}{\delta_d^2\tau(1-\tau)}\Bigg\}. \label{cond}
\end{align}
Then the following statements hold.
\begin{enumerate}[label=(\roman*)]
\item Algorithm \ref{AL-alg} terminates after $K+1$ outer iterations and outputs an approximate stationary point $(x^{K+1},y^{K+1})$ of \eqref{prob} satisfying
\begin{align}
&\dist(0,\partial_x F(x^{K+1},y^{K+1})+\nabla c(x^{K+1})\tl_x^{K+1}-\nabla_xd(x^{K+1},y^{K+1})\lambda^{K+1}_\bfy) \leq \varepsilon, \label{t1-1} \\
& \dist\left(0,\partial_y F(x^{K+1},y^{K+1})-\nabla_y d(x^{K+1},y^{K+1})\lambda^{K+1}_\bfy\right)\leq\varepsilon, \label{t1-2} \\
&\|[c(x^{K+1})]_+\|\leq \varepsilon\delta_c^{-1}\left(L_F +2L_d\delta_d^{-1}(\Delta+D_\bfy)+1\right), \label{t1-3} \\
& |\langle\tl^{K+1}_\bfx,c(x^{K+1})\rangle| \leq \varepsilon \delta_c^{-1}(L_F +2L_d\delta_d^{-1}(\Delta+D_\bfy)+1)\nn\\
&\qquad\qquad\qquad\qquad\,\,
\times\max\{\delta_c^{-1}(L_F +2L_d\delta_d^{-1}(\Delta+D_\bfy)+1), \Lambda\}, \label{t1-4} \\
&\|[d(x^{K+1},y^{K+1})]_+\|\leq2\varepsilon\delta_d^{-1}(\Delta+D_\bfy),\label{t1-5}\\
& |\langle \lambda^{K+1}_\bfy, d(x^{K+1},y^{K+1})\rangle| \leq 2\varepsilon\delta_d^{-1}(\Delta+D_\bfy)\max\{2\delta_d^{-1} (\Delta+D_\bfy), \|\lambda_\bfy^0\|\}. \label{t1-6}
\end{align} 
\item The total number of evaluations of $\nabla f$, $\nabla c$, $\nabla d$ and proximal operators of $p$ and $q$ performed in Algorithm \ref{AL-alg} is at most $N$, respectively, where
\begin{align}
N=&\ 3397\max\left\{2,\sqrt{L/(2\sigma)}\right\}T(1-\tau^{7/2})^{-1}\nn\\
&\ \times(\tau\varepsilon)^{-7/2}\left(20K\log(1/\tau)+2(\log M)_++2+2\log(2T) \right).\label{N2}
\end{align}
\end{enumerate}
\end{thm}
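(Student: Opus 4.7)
The plan is to establish part (i) first by tracking three quantities across outer iterations and then use part (i) together with Theorem~\ref{mmax-thm} to bound the cumulative work in part (ii).

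The first task is to obtain uniform bounds on the multiplier sequences and on the AL function values at the iterates. Since $\lambda^k_\bfx\in\cB^+_\Lambda$ by construction, it is bounded by $\Lambda$ automatically. For $\{\lambda^k_\bfy\}$, the plan is to exploit step~2 of Algorithm~\ref{AL-alg} which guarantees $\AL_\bfx(x^k_{\rm init},y^k,\lambda^k_\bfx;\rho_k)\le\AL_\bfx(x_\bff,y^k,\lambda^k_\bfx;\rho_k)$; combined with $\|[c(x_\bff)]_+\|\le\sqrt{\varepsilon}$, $\rho_k\varepsilon\le 1$ under condition \eqref{cond}, and the $\epsilon_k$-primal-dual stationarity of $(x^{k+1},y^{k+1})$ in the AL subproblem, this will yield a recursion $\|\lambda_\bfy^{k+1}\|^2\le\|\lambda_\bfy^k\|^2+2(\Delta+D_\bfy)\tau^k$, whence $\|\lambda_\bfy^k\|^2\le \|\lambda_\bfy^0\|^2+2(\Delta+D_\bfy)/(1-\tau)$ uniformly. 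This gives the uniform bound $L_k\le L/\varepsilon_k$, so Algorithm~\ref{mmax-alg2} is invoked with the correct smoothness constant.

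For part~(i), the outer stationarity conditions \eqref{t1-1}--\eqref{t1-2} follow directly from the $\epsilon_k$-primal-dual stationarity of $(x^{K+1},y^{K+1})$ for the AL subproblem, using the identity $\nabla_x \tfrac{1}{2\rho}(\|[\lambda_\bfx+\rho c(x)]_+\|^2-\|\lambda_\bfx\|^2)=\nabla c(x)\tilde\lambda_\bfx^{K+1}$ and the analogous relation for $d$, because $\epsilon_K=\tau^K\le\varepsilon$. The feasibility bounds \eqref{t1-3} and \eqref{t1-5} are the main analytic obstacle: I will establish \eqref{t1-5} first by invoking Assumption~\ref{mfcq}(ii) together with the inner stationarity in $y$ applied at $\hat y_x$ (testing the ascent direction toward a strict Slater point yields $\delta_d\|\lambda_\bfy^{K+1}\|/2\le\Delta+D_\bfy+\cO(\epsilon_K)$ which controls both $\|\lambda_\bfy^{K+1}\|$ and $\|[d(x^{K+1},y^{K+1})]_+\|=\rho_K^{-1}\|\lambda^{K+1}_\bfy-\lambda^K_\bfy\|$); then \eqref{t1-3} requires a separate argument based on Assumption~\ref{mfcq}(i), where the robust MFCQ direction $v_x$ is tested against the inner stationarity in $x$ to conclude $\delta_c\|[c(x^{K+1})]_+\|\le\varepsilon(L_F+2L_d\delta_d^{-1}(\Delta+D_\bfy)+1)$ after absorbing the $d$-term via the already-proved bound on $\lambda_\bfy^{K+1}$. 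Given the feasibility bounds, the complementary-slackness bounds \eqref{t1-4} and \eqref{t1-6} follow from Cauchy--Schwarz since $|\langle\tilde\lambda^{K+1}_\bfx,c(x^{K+1})\rangle|\le \|\tilde\lambda^{K+1}_\bfx\|\,\|c(x^{K+1})\|$ on the active portion, combined with the triangle inequality $\|\tilde\lambda^{K+1}_\bfx\|\le\|\lambda^K_\bfx\|+\rho_K\|[c(x^{K+1})]_+\|$; verifying that \eqref{cond} forces the $\rho_K\|[c(x^{K+1})]_+\|^2$ term to scale like $\varepsilon$ is the bookkeeping step.

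For part~(ii), I would apply Theorem~\ref{mmax-thm} to each outer call of Algorithm~\ref{mmax-alg2} with parameters $\epsilon\leftarrow\epsilon_k$, $\hat\epsilon_0\leftarrow\epsilon_k/2$, and smoothness constant $L_k\le L/\epsilon_k$. The per-call operation count $\widehat N$ from \eqref{mmax-N-old} becomes $\cO(\sqrt{L_k/\sigma}\,\epsilon_k^{-2}\log\epsilon_k^{-1})=\cO(\epsilon_k^{-5/2}\log\epsilon_k^{-1})$ after absorbing constants via $L_k\le L/\epsilon_k$ and the uniform bounds on $\max_y \bH(\hat x^0,y)-\bH^*$ that the part-(i) estimates already provide. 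Summing over $k=0,\ldots,K$ gives a geometric sum $\sum_{k=0}^K \tau^{-7k/2}\le\tau^{-7K/2}(1-\tau^{7/2})^{-1}\le(\tau\varepsilon)^{-7/2}(1-\tau^{7/2})^{-1}$, and the logarithmic factors contribute the $20K\log(1/\tau)+\log M$ term seen in \eqref{N2}. The hardest technical step here will be verifying that the quantity $\max_y \bH_k(x^k_{\rm init},y)-\bH_k^*$ in \eqref{mmax-K} applied to the AL subproblem admits a bound uniform in $k$ (after the factor $L_k$ is absorbed)—this is where the choice of $x^k_{\rm init}$ via the safeguard involving $x_\bff$ is essential, and it will require combining the bounds on $\AL_\bfx(x_\bff,\cdot,\cdot;\rho_k)$ with $\|[c(x_\bff)]_+\|\le\sqrt{\varepsilon}$ and $\rho_k\varepsilon\le 1$ once more. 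Collecting these ingredients yields the stated bound $N$.
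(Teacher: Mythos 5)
Your overall architecture matches the paper's: a Slater-point test controls the $y$-constraints and the multiplier $\lambda^{K+1}_\bfy$, a robust-MFCQ directional test controls $\tl^{K+1}_\bfx$ and hence $\|[c(x^{K+1})]_+\|\le\rho_K^{-1}\|\tl^{K+1}_\bfx\|$, and part (ii) sums the per-call costs of Theorem~\ref{mmax-thm} using $L_k\le\rho_k L$. However, there is one genuine gap and two internal errors worth fixing.

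The gap is in your argument for \eqref{t1-3}. Assumption~\ref{mfcq}(i) only provides the direction $v_x$ at points of $\cF(\theta)=\{x\in\mcX:\|[c(x)]_+\|\le\theta\}$, so before you can ``test the robust MFCQ direction against the inner stationarity in $x$'' you must first prove $x^{K+1}\in\cF(\theta)$. This is not automatic and is precisely where the safeguarded initialization enters \emph{part (i)}, not only part (ii): the choice of $x^k_{\rm init}$ via $x_\bff$ gives a $k$-uniform upper bound on $\max_y\AL(x^k_{\rm init},y,\lambda^k_\bfx,\lambda^k_\bfy;\rho_k)$, Theorem~\ref{mmax-thm} (inequality \eqref{upperbnd-old}) transfers it to $\max_y\AL(x^{K+1},y,\cdot;\rho_K)$, and combining with $\max_y\AL\ge F_{\rm low}+\frac{1}{2\rho_K}(\|[\lambda^K_\bfx+\rho_Kc(x^{K+1})]_+\|^2-\|\lambda^K_\bfx\|^2)$ yields $\|[c(x^{K+1})]_+\|^2\le\theta^2$ exactly under the $\theta^{-2}\{\cdots\}$ term of \eqref{cond}. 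You also need $\rho_K\ge\theta^{-1}\Lambda$ (the second term in \eqref{cond}) to show $([\lambda^K_\bfx+\rho_Kc(x^{K+1})]_+)_i=0$ for the inactive indices $i\notin\cA(x^{K+1};\theta)$, so that only the $-\delta_c$-descent components survive in $v_x^T\nabla c(x^{K+1})\tl^{K+1}_\bfx$. Without these two steps the MFCQ test cannot be carried out.

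Two further corrections. First, your multiplier recursion $\|\lambda_\bfy^{k+1}\|^2\le\|\lambda_\bfy^k\|^2+2(\Delta+D_\bfy)\tau^k$ has the scaling reversed: the identity $\frac{1}{2\rho_k}(\|\lambda^{k+1}_\bfy\|^2-\|\lambda^k_\bfy\|^2)=F(x^{k+1},y^{k+1})-\AL_\bfy(x^{k+1},y^{k+1},\lambda^k_\bfy;\rho_k)\le\Delta+D_\bfy$ gives an increment of order $\rho_k=\tau^{-k}$, not $\tau^{k}$, so only the weaker bound $\rho_k^{-1}\|\lambda^k_\bfy\|^2\le\|\lambda_\bfy^0\|^2+2(\Delta+D_\bfy)/(1-\tau)$ of Lemma~\ref{l-lycnstr} is available (the uniform bound $\|\lambda_\bfy^{k+1}\|\le2\delta_d^{-1}(\Delta+D_\bfy)$ holds only once $\rho_k$ exceeds the Slater threshold \eqref{muk-bnd}). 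This still suffices for $L_k\le\rho_kL$, so your downstream use survives. Second, your per-call count $\cO(\sqrt{L_k/\sigma}\,\epsilon_k^{-2}\log\epsilon_k^{-1})=\cO(\epsilon_k^{-5/2}\log\epsilon_k^{-1})$ is inconsistent with the $\tau^{-7k/2}$ summand you then use: the outer-iteration count $\wT$ in \eqref{mmax-K} carries the factor $L_{\nabla\bh}=L_k=\cO(\rho_k)$, so it is $\cO(\epsilon_k^{-3})$ rather than $\cO(\epsilon_k^{-2})$, and the per-call cost is $\cO(\epsilon_k^{-7/2}\log\epsilon_k^{-1})$ — which is what your geometric sum (correctly) assumes and what \eqref{N2} reflects.
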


\begin{rem}
\bi
\item[(i)] The condition \eqref{cond} on $\varepsilon$ is to ensure that the final penalty parameter $\rho_K$ in Algorithm \ref{AL-alg} is large enough so that  feasibility and complementarity slackness are nearly satisfied at $(x^{K+1},y^{K+1},\tl^{K+1}_\bfx, \lambda^{K+1}_\bfy)$. 
\item[(ii)]
One can observe from Theorem \ref{complexity} that Algorithm \ref{AL-alg} enjoys an iteration complexity of $\cO(\log\varepsilon^{-1})$ and an operation complexity of $\cO(\varepsilon^{-3.5}\log\varepsilon^{-1})$, measured by the amount of evaluations of $\nabla f$, $\nabla c$, $\nabla d$ and proximal operators of $p$ and $q$, for finding an $\cO(\varepsilon)$-KKT solution $(x^{K+1},y^{K+1})$ of \eqref{prob} such that
\begin{align*}
& \dist\left(\partial_x F(x^{K+1},y^{K+1})+\nabla c(x^{K+1})\tl_\bfx-\nabla_x d(x^{K+1},y^{K+1}) \lambda_\bfy^{K+1}\right) \leq \varepsilon,\\
& \dist\left(\partial_y F(x^{K+1},y^{K+1})-\nabla_y d(x^{K+1},y^{K+1}) \lambda_\bfy^{K+1}\right)\leq \varepsilon,\\
& \|[c(x^{K+1})]_+\|=\cO(\varepsilon), \quad |\langle \tl_\bfx^{K+1}, c(x^{K+1}) \rangle|=\cO(\varepsilon),\\
& \|[d(x^{K+1},y^{K+1})]_+\|=\cO(\varepsilon), \quad |\langle \lambda_\bfy^{K+1}, d(x^{K+1},y^{K+1}) \rangle|=\cO(\varepsilon),
\end{align*}
where $\tl_\bfx^{K+1}\in\bR_+^{\tn}$ is defined in \eqref{tlx} and $\lambda_\bfy^{K+1}\in\bR_+^{\tm}$ is given in Algorithm \ref{AL-alg}.
\item[(iii)] It shall be mentioned that an $\cO(\varepsilon)$-KKT solution of \eqref{prob} can be found by \cite[Algorithm 3]{lu2024first} with an operation complexity of $\cO(\varepsilon^{-4}\log\varepsilon^{-1})$ (see \cite[Theorem 3]{lu2024first}). As a result, the operation complexity of Algorithm \ref{AL-alg} improves that of \cite[Algorithm 3]{lu2024first} by a factor of $\epsilon^{-1/2}$.
\ei
\end{rem}

\section{Numerical results}\label{sec:exp}
In this section, we conduct some preliminary experiments to test the performance of our proposed method (namely, Algorithms \ref{mmax-alg2} and \ref{AL-alg}), and compare them with an alternating gradient projection method (AGP) \cite[Algorithm 1]{xu2023unified} and an augmented Lagrangian method (ALM) \cite[Algorithm 3]{lu2024first}, respectively. All the algorithms are coded in Matlab, and all the computations are performed on a laptop with a 2.30 GHz Intel i9-9880H 8-core processor and 16 GB of RAM.

\subsection{Unconstrained nonconvex-strongly-concave minimax optimization with quadratic objective}
In this subsection, we consider the problem
\begin{align}\label{prob-exp0}
\min_{x}\max_{y}\  x^TAx+x^TBy+y^TCy+c^Tx+d^Ty+\cI_{[-1,1]^n}(x)-\cI_{[-1,1]^m}(y),
\end{align}
where $A\in\bR^{n\times n}$, $B\in\bR^{n\times m}$, $C\in\bR^{m\times m}$, $c\in\bR^{n}$, $d\in\bR^{m}$, and $\cI_{[-1,1]^n}(\cdot)$ and $\cI_{[-1,1]^m}(\cdot)$ are the indicator functions of $[-1,1]^n$ and $[-1,1]^m$ respectively.

For each pair $(n,m)$, we randomly generate $10$ instances of problem \eqref{prob-exp0}. Specifically, we construct $A=UDU^T$,  where $U=\mathrm{orth}(\mathrm{randn}(n))$, and $D$  is a diagonal matrix with entries independently drawn from a normal distribution with mean $0$ and standard deviation $0.1$. Matrix $C$ is generated in a similar manner, except the diagonal entries of the corresponding matrix are drawn independently from a uniform distribution over $[2,3]$.  In addition, we randomly generate vectors $c$ and $d$ with all the entries independently drawn from a normal distribution with mean $0$ and standard deviation $0.1$. 

Notice that \eqref{prob-exp0} is a special case of \eqref{mmax-prob} with $h(x,y)=x^TAx+x^TBy+y^TCy+c^Tx+d^Ty$, $p(x)=\cI_{[-1,1]^n}(x)$, and $q(y)=\cI_{[-1,1]^m}(y)$ and can be suitably solved by Algorithm \ref{mmax-alg2} and AGP \cite[Algorithm 1]{xu2023unified}. In addition, problem \eqref{prob-exp0} is equivalent to as the following minimization problem
\begin{align}
\min_{x} \Phi(x),\label{prob-hyper0}
\end{align}
where $\Phi$ is the hyper-objective function defined as
\[
\Phi(x)=\max_{y} x^TAx+x^TBy+y^TCy+c^Tx+d^Ty+\cI_{[-1,1]^n}(x)-\cI_{[-1,1]^m}(y).
\]

For Algorithm~\ref{mmax-alg2}, we set the parameters to 
$(\epsilon,\hat\epsilon_0)=(10^{-2},5\times10^{-3})$. For AGP, we use the parameter settings as specified in \cite[Subsection 3.1]{xu2023unified}. Both algorithms are initialized with the all-one vector. Each algorithm is terminated once a $10^{-2}$-primal-dual stationary point $(x^k,y^k)$ of \eqref{prob-exp0} is found for some $k$, and the pair $(x^k,y^k)$ is returned as an approximate solution to \eqref{prob-exp0}.

The computational results of the aforementioned algorithms on the randomly generated instances are presented in Table~\ref{t0}. Specifically, the values of $n$ and $m$ are listed in the first two columns. For each pair $(n, m)$, the average initial hyper-objective value $\Phi(x^0)$, the average final hyper-objective value $\Phi(x^k)$, and the average CPU time (in seconds) over 10 random instances are reported in the remaining columns. It can be observed that both Algorithm~\ref{mmax-alg2} and AGP~\cite[Algorithm 1]{xu2023unified} yield approximate solutions with comparable hyper-objective values, which are significantly lower than the initial value. However, Algorithm~\ref{mmax-alg2} consistently achieves significantly lower CPU times, which may be attributed to its more favorable dependence on condition numbers.

\begin{table}[H]
\centering
\resizebox{0.95\linewidth}{!}{
\begin{tabular}{cc||c||ll||ll}
\hline
&&Initial hyper-objective value&\multicolumn{2}{c||}{Final hyper-objective value}&\multicolumn{2}{c}{CPU time (seconds)}\\
$n$&$m$&& Algorithm \ref{mmax-alg2} &AGP& Algorithm \ref{mmax-alg2} &AGP\\\hline
50&50&$4.30$&$-0.30$&$-0.29$&19.3&100.0\\
100&100&$10.34$&$-1.13$&$-1.10$&82.6&428.6\\
150&150&$22.16$&$-1.01$&$-1.09$&176.5&910.3\\
200&200&$32.52$&$-1.43$&$-1.39$&222.6&1141.1\\
250&250&$69.19$&$-1.80$&$-1.83$&312.7&1219.1\\
300&300&$108.76$&$-2.11$&$-2.07$&400.2&1245.5\\
350&350&$124.88$&$-2.06$&$-2.09$&483.0&1366.9\\
400&400&$175.78$&$-2.17$&$-2.13$&512.9&1443.3\\
\hline
\end{tabular}
}
\caption{Numerical results for problem \eqref{prob-exp0}}\label{t0}
\end{table}

\subsection{Constrained nonconvex-strongly-concave minimax optimization with quadratic objective and linear constraints}

In this subsection, we consider the problem
\begin{align}\label{prob-exp}
\min_{\widehat Ax\leq \hat b}\max_{\widetilde Ax+\widetilde B y\leq \tilde b} x^TAx+x^TBy+y^TCy+c^Tx+d^Ty+\cI_{[-1,1]^n}(x)-\cI_{[-1,1]^m}(y),
\end{align}
where $A\in\bR^{n\times n}$, $B\in\bR^{n\times m}$, $C\in\bR^{m\times m}$, $c\in\bR^{n}$, $d\in\bR^{m}$, $\widehat A\in\bR^{\tilde n\times n}$, $\hat b\in\bR^{\tilde n}$, $\widetilde A\in\bR^{\tilde m\times n}$, $\widetilde B\in\bR^{\tilde m\times m}$, $\tilde b\in\bR^{\tilde m}$, and $\cI_{[-1,1]^n}(\cdot)$ and $\cI_{[-1,1]^m}(\cdot)$ are the indicator functions of $[-1,1]^n$ and $[-1,1]^m$ respectively.

For each tuple $(n,m,\tilde n,\tilde m)$, we randomly generate $10$ instances of problem \eqref{prob-exp}. Specifically, we construct $A=UDU^T$,  where $U=\mathrm{orth}(\mathrm{randn}(n))$, and $D$  is a diagonal matrix with entries independently drawn from a normal distribution with mean $0$ and standard deviation $0.1$. Matrix $C$ is generated in a similar manner, except its diagonal entries are independently drawn from a uniform distribution over $[10,11]$.  In addition, we randomly generate matrices $B$, $\widehat A$, $\widetilde A$, $\widetilde B$, and vectors $c$, $d$, $\tilde b$ with all the entries independently drawn from a normal distribution with mean $0$ and standard deviation $0.1$. Finally, we randomly generate $x_{\rm\bf nf}\in[-1,1]^n$ by first sampling each entry independently from a normal distribution with mean $0$ and standard deviation $0.1$, then projecting the resulting vector onto $[-1,1]^n$. We choose $\hat b$ such that $x_{\rm\bf nf}$ is $0.1$-nearly feasible (see Assumption \ref{knownfeas}) for problem \eqref{prob-exp}. 

Notice that \eqref{prob-exp} is a special case of \eqref{prob} with
\begin{align*}
&f(x,y)=x^TAx+x^TBy+y^TCy+c^Tx+d^Ty,\quad p(x)=\cI_{[-1,1]^n}(x), \\
&q(y)=\cI_{[-1,1]^m}(y),\quad c(x)=\widehat Ax-\hat b,\quad d(x,y)=\widetilde Ax+\widetilde B y-\tilde b,
\end{align*}
and can be suitably solved by Algorithm \ref{AL-alg} and ALM \cite[Algorithm 3]{lu2024first}. In addition, problem \eqref{prob-exp} is equivalent to the following minimization problem
\begin{align}
\min_{\widehat Ax\leq\hat b} \Phi(x),\label{prob-hyper}
\end{align}
where $\Phi$ is the hyper-objective function defined as
\[
\Phi(x)=\max_{\widetilde Ax+\widetilde B y\leq \tilde b} x^TAx+x^TBy+y^TCy+c^Tx+d^Ty+\cI_{[-1,1]^n}(x)-\cI_{[-1,1]^m}(y).
\]

We choose the parameters as $(\varepsilon,\tau,\Lambda)=(10^{-2},0.5,10)$ for both Algorithm \ref{AL-alg} and ALM \cite[Algorithm 3]{lu2024first}, and initialize them at zero. The algorithms are terminated once a $10^{-2}$-relative-KKT point\footnote{We say $(x,y)$ is an $\epsilon$-relative-KKT point of \eqref{prob-exp} if it is an $(|\Phi(x)|+1)\epsilon$-KKT point of \eqref{prob-exp}.} $(x_k,y_k)$ of \eqref{prob-exp} is found for some $k$, and we output $(x_k,y_k)$ as an approximate solution to \eqref{prob-exp}.

The computational results of the aforementioned algorithms for the instances randomly generated above are presented in Table \ref{t1}. Specifically, the values of $n$, $m$, $\tilde n$, and $\tilde m$ are listed in the first four columns. For each tuple $(n,m,\tilde n,\tilde m)$, the average initial hyper-objective value $\Phi(x^0)$, the average final hyper-objective value $\Phi(x^k)$, and the average CPU time (in seconds) over $10$ random instances are given in the rest of the columns. We observe that both Algorithm~\ref{AL-alg} and ALM~\cite[Algorithm 3]{lu2024first} produce approximate solutions with comparable hyper-objective values that are significantly lower than the initial ones. Moreover, Algorithm~\ref{AL-alg} consistently achieves substantially lower CPU times since it effectively exploits the strong concavity structure of the problem.

\begin{table}[H]
\centering
\resizebox{0.95\linewidth}{!}{
\begin{tabular}{cccc||c||ll||ll}
\hline
&&&&Initial hyper-objective value&\multicolumn{2}{c||}{Final hyper-objective value}&\multicolumn{2}{c}{CPU time (seconds)}\\
$n$&$m$&$\tn$&$\tm$&&Algorithm \ref{AL-alg}&ALM&Algorithm \ref{AL-alg}&ALM\\\hline
50&100&5&10&$-0.52$&$-183.09$&$-183.18$&332.8&1111.9\\
100&200&10&20&$-0.40$&$-625.04$&$-625.76$&2001.9&2996.1\\
150&300&15&30&$-0.45$&$-895.71$&$-895.02$&4535.1&6396.9\\
200&400&20&40&$-0.34$&$-1255.49$&$-1254.74$&6252.2&9653.4\\
250&500&25&50&$-0.45$&$-1631.83$&$-1632.54$&8343.8&13522.1\\
\hline
\end{tabular}
}
\caption{Numerical results for problem \eqref{prob-exp}}\label{t1}
\end{table}

\section{Proof of the main result} \label{sec:proofs}
In this section we provide a proof of our main results presented in Sections \ref{minimax} and \ref{sec:main}, which are particularly Theorems  \ref{mmax-thm} and \ref{complexity}.

\subsection{Proof of the main results in Section~\ref{minimax}}\label{sec:proof2-2}
In this subsection we prove Theorem \ref{mmax-thm}. Before proceeding, let $\{(x^k,y^k)\}_{k\in \bbT}$ denote all the iterates generated by Algorithm~\ref{mmax-alg2}, where $\bbT$ is a subset of consecutive nonnegative integers starting from $0$. Also, we define $\bbT-1 = \{k-1: k \in \bbT\}$. We first establish two lemmas and then use them to prove Theorem \ref{mmax-thm} subsequently.

The following lemma shows that an approximate primal-dual stationary point  of \eqref{ppa-subprob} is found at each iteration of Algorithm~\ref{mmax-alg2}, and also provides an estimate of operation complexity for finding it.

\begin{lemma}\label{innercplx-alg6}
Suppose that Assumption~\ref{mmax-a} holds. Let $\{(x^k,y^k)\}_{k\in\bbT}$ be generated by Algorithm~\ref{mmax-alg2}, $\bH^*$, $D_\bfx$, $D_\bfy$, $\bH_{\rm low}$, $\halpha$, $\hdelta$ be defined in \eqref{mmax-prob}, \eqref{mmax-D}, \eqref{mmax-bnd}, \eqref{mmax-balpha} and \eqref{mmax-tP}, $L_{\nabla\bh}$ be given in Assumption \ref{mmax-a}, $\epsilon$, $\hat\epsilon_k$ be given in Algorithm~\ref{mmax-alg2}, and
\begin{align}
&\ \hat N_k:=3397\Bigg\lceil\max\left\{2,\sqrt{\frac{L_{\nabla \bh}}{2\sigma_y}}\right\}\log\frac{4\max\left\{\frac{1}{2L_{\nabla \bh}},\min\left\{\frac{1}{2\sigma_y},\frac{4}{\halpha L_{\nabla \bh}}\right\}\right\}\left(\hdelta+2\halpha^{-1}(\bH^*-\bH_{\rm low}+L_{\nabla \bh} D_\bfx^2)\right)}{\left[9L_{\nabla \bh}^2/\min\{L_{\nabla \bh},\sigma_y\}+ 3L_{\nabla \bh}\right]^{-2}\hat\epsilon_k^2}\Bigg\rceil_+.\label{mmax-Nk}
\end{align}
Then for all $0\leq k\in\bbT-1$, $(x^{k+1},y^{k+1})$ is an $\hat\epsilon_k$-primal-dual stationary point of \eqref{ppa-subprob}. Moreover, the total number of evaluations of $\nabla \bh$ and proximal operators of $p$ and $q$ performed at iteration $k$ of Algorithm~\ref{mmax-alg2} for generating $(x^{k+1},y^{k+1})$ is no more than $\hat N_k$, respectively. 
\end{lemma}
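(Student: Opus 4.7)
The plan is to apply the complexity guarantee for Algorithm~\ref{mmax-alg1} (stated in the appendix) to the subproblem~\eqref{ppa-subprob} that Algorithm~\ref{mmax-alg2} solves at each outer iteration, and then convert the resulting iteration bound into an operation count.

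First I would verify that the hypotheses required by Algorithm~\ref{mmax-alg1} are satisfied. By Assumption~\ref{mmax-a}, $\bh$ is $L_{\nabla\bh}$-smooth on $\mcX\times\mcY$ and $\bh(x,\cdot)$ is $\sigma_y$-strongly concave for every $x\in\mcX$. Hence the regularized function $\bh_k(x,y)=\bh(x,y)+L_{\nabla\bh}\|x-x^k\|^2$ is $L_{\nabla\bh}$-strongly convex in $x$ (smoothness of $\bh$ gives $\nabla_{xx}^2\bh\succeq -L_{\nabla\bh}I$, which the regularizer lifts to $L_{\nabla\bh}I$), is still $\sigma_y$-strongly concave in $y$, and is $3L_{\nabla\bh}$-smooth on $\mcX\times\mcY$ by the triangle inequality. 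Therefore the parameter substitutions performed in step~2 of Algorithm~\ref{mmax-alg2}, namely $\sigma_x\leftarrow L_{\nabla\bh}$ and $L_{\nabla\h}\leftarrow 3L_{\nabla\bh}$, are consistent with the actual moduli of $\bh_k$; in particular, the output $(x^{k+1},y^{k+1})$ produced by invoking Algorithm~\ref{mmax-alg1} with target accuracy $\bar\epsilon=\hat\epsilon_k$ is by construction an $\hat\epsilon_k$-primal-dual stationary point of~\eqref{ppa-subprob}, proving the first claim of the lemma.

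Next I would invoke the complexity bound for Algorithm~\ref{mmax-alg1}. For a strongly-convex-strongly-concave problem with moduli $\sigma_x,\sigma_y$ and smoothness $L_{\nabla\h}$, this bound has the form $\mathcal{O}\bigl(\sqrt{L_{\nabla\h}/\min\{\sigma_x,\sigma_y\}}\log(\Delta_0/\bar\epsilon^2)\bigr)$ iterations, where $\Delta_0$ measures the initial gap. Substituting $\sigma_x=L_{\nabla\bh}$, $\sigma_y$, and $L_{\nabla\h}=3L_{\nabla\bh}$ produces the acceleration prefactor $\max\{2,\sqrt{L_{\nabla\bh}/(2\sigma_y)}\}$ of~\eqref{mmax-Nk}, while the factor $[9L_{\nabla\bh}^2/\min\{L_{\nabla\bh},\sigma_y\}+3L_{\nabla\bh}]^{-2}$ appearing against $\hat\epsilon_k^2$ in the denominator of the log comes from converting the algorithm's internal (weighted) stationarity residual into the Euclidean stationarity measure used in Definition~\ref{def2}.

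The main technical obstacle, where I expect the bookkeeping to be heaviest, is bounding $\Delta_0$ at the starting pair $(\bar z^0,\bar y^0)=(-L_{\nabla\bh}x^k,y^k)$ by a quantity expressible in the global constants $\bH^*,\bH_{\rm low},D_\bfx,D_\bfy$ rather than in the iterate-dependent saddle value of~\eqref{ppa-subprob}. I would argue as follows: since $\|u-v\|\le D_\bfx$ on $\mcX$, the added regularizer contributes at most $L_{\nabla\bh}D_\bfx^2$ to $\bh_k$ relative to $\bh$, so $\min_x\max_y\bH_k\le \bH^*+L_{\nabla\bh}D_\bfx^2$ while $\bH_k\ge \bH\ge \bH_{\rm low}$ everywhere on $\mcX\times\mcY$. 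Combining this two-sided control with the primal/dual diameter bounds $D_\bfx,D_\bfy$ on the distance from the starting iterate to any saddle of~\eqref{ppa-subprob}, and weighting these contributions by $\halpha$ and $\halpha^{-1}$ exactly as encoded in~\eqref{mmax-balpha}--\eqref{mmax-tP}, produces the numerator $\hdelta+2\halpha^{-1}(\bH^*-\bH_{\rm low}+L_{\nabla\bh}D_\bfx^2)$ inside the logarithm of~\eqref{mmax-Nk}. Finally, each inner iteration of Algorithm~\ref{mmax-alg1} performs $O(1)$ evaluations of $\nabla\bh_k$ (which by~\eqref{mmax-sub} reduces to one evaluation of $\nabla\bh$ plus a closed-form linear update) and $O(1)$ proximal evaluations of $p$ and $q$; multiplying through and absorbing all universal constants into the factor $3397$ gives the stated operation bound $\hat N_k$ for a single outer iteration.
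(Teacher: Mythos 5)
Your proposal is correct and follows essentially the same route as the paper: verify that $\bh_k$ is $L_{\nabla\bh}$-strongly convex in $x$, $\sigma_y$-strongly concave in $y$, and $3L_{\nabla\bh}$-smooth; bound the saddle value of \eqref{ppa-subprob} from above by $\bH^*+L_{\nabla\bh}D_\bfx^2$ (via the diameter $D_\bfx$) and its infimum from below by $\bH_{\rm low}$; and then invoke the appendix complexity theorem for Algorithm~\ref{mmax-alg1} with $\sigma_x=L_{\nabla\bh}$, $L_{\nabla\h}=3L_{\nabla\bh}$, $\bar\epsilon=\hat\epsilon_k$. This is exactly the paper's argument, with your remaining remarks amounting to tracing how the stated constants in \eqref{mmax-Nk} arise from that substitution.
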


\begin{proof}
Let $(x^*,y^*)$ be an optimal solution of \eqref{mmax-prob}. Recall that $H$, $H_k$ and $h_k$ are respectively given in \eqref{mmax-prob}, \eqref{ppa-subprob} and \eqref{mmax-sub}, $\mcX=\dom\,p$ and $\mcY=\dom\,q$. Notice that $x^*, x^k\in\mcX$. Then we have
\begin{align}
\bH_{k,*}:=\min_x\max_y H_k(x,y) =&\ \min_x\max_y\left \{\bH(x,y)+L_{\nabla \bh}\|x-x^k\|^2\right\}\notag\\
\leq&\ \max_y \{\bH(x^*,y)+L_{\nabla \bh}\|x^*-x^k\|^2\} \overset{\eqref{mmax-prob}\eqref{mmax-D}}\leq \bH^*+L_{\nabla \bh} D_\bfx^2.\label{Hkstar}
\end{align}
 Moreover, by $\mcX=\dom\,p$, $\mcY=\dom\,q$, \eqref{mmax-D} and \eqref{mmax-bnd}, one has
\begin{align}
\bH_{k, \rm low}:=\min_{(x,y)\in\dom\,p\times\dom\,q} H_k(x,y) =\ \min_{(x,y)\in\mcX\times\mcY}\left\{\bH(x,y)+L_{\nabla \bh}\|x-x^k\|^2\right\}\overset{\eqref{mmax-bnd}}\geq\bH_{\rm low}.\label{hklow}
\end{align}
In addition, by Assumption~\ref{mmax-a} and the definition of $\bh_k$ in \eqref{mmax-sub}, it is not hard to verify that $\bh_k(x,y)$ is $L_{\nabla \bh}$-strongly-convex in $x$, $\sigma_y$-strongly-concave in $y$, and $3L_{\nabla \bh}$-smooth on its domain. Also, recall from Remark \ref{ppa-rem} that $(x^{k+1},y^{k+1})$  results from applying Algorithm~\ref{mmax-alg1} to problem \eqref{ppa-subprob}.  
The conclusion of this lemma then follows by using \eqref{Hkstar} and \eqref{hklow} and applying Theorem~\ref{ea-prop} to  \eqref{ppa-subprob} with $\bar\epsilon=\hat\epsilon_k$, $\sigma_x=L_{\nabla \bh}$, $\sigma_y=\sigma$, $L_{\nabla \h}=3L_{\nabla \bh}$, $\bar \alpha= \halpha$, $\bar \delta=\hdelta$, $\H_{\rm low}=H_{k, \rm low}$, and $\H^*=H_{k,*}$.
\end{proof}

The following lemma provides an upper bound on the least progress of the solution sequence of Algorithm~\ref{mmax-alg2} and also on the last-iterate objective value of \eqref{mmax-prob}.

\begin{lemma} \label{output-prop}
Suppose that Assumption~\ref{mmax-a} holds. Let $\{x^k\}_{k\in\bbT}$ be generated by Algorithm~\ref{mmax-alg2}, $H$, $\bH^*$ and $D_\bfy$ be defined in \eqref{mmax-prob} and \eqref{mmax-D}, $L_{\nabla\bh}$ be given in Assumption \ref{mmax-a}, and $\epsilon$, $\hat\epsilon_0$ and $\hat x^0$ be given in Algorithm~\ref{mmax-alg2}. Then for all $0\leq K\in\bbT-1$, we have
\begin{align}
&\min_{0\leq k\leq K}\|x^{k+1}-x^k\|\leq\frac{\max_y\bH(\hat x^0,y)-\bH^*}{L_{\nabla \bh}(K+1)}+\frac{2\hat\epsilon_0^{2}(1+\sigma_y^{-2}L_{\nabla \bh}^2)}{L_{\nabla \bh}^2(K+1)},\label{mmax-xbnd}\\
&\max_y\bH(x^{K+1},y)\leq \max_y\bH(\hat x^0,y)+2\hat\epsilon_0^2\left(L_{\nabla \bh}^{-1}+\sigma_y^{-2}L_{\nabla \bh}\right).\label{K-upperbnd}
\end{align}
\end{lemma}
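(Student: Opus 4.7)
The plan is to interpret Algorithm~\ref{mmax-alg2} as an inexact proximal-point scheme on the hyper-objective $\Phi(x):=\max_y \bH(x,y)$, and to combine Lemma~\ref{innercplx-alg6} with a Danskin-type argument to translate the primal--dual stationarity of the saddle subproblem into an approximate minimum condition for the strongly convex perturbed hyper-objective.

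First I would introduce $\Phi$ and its per-iteration perturbation $\Phi_k(x):=\max_y \bH_k(x,y)=\Phi(x)+L_{\nabla \bh}\|x-x^k\|^2$. Since $\bh(\cdot,y)$ is $L_{\nabla \bh}$-weakly convex for every $y\in\mcY$ (by $L_{\nabla \bh}$-smoothness from Assumption~\ref{mmax-a}) and weak convexity is preserved under pointwise maximization over $y$ as well as under addition of the convex function $\np$, the function $\Phi$ is $L_{\nabla \bh}$-weakly convex, so $\Phi_k$ is $L_{\nabla \bh}$-strongly convex. Letting $x_k^\star$ denote its unique minimizer, evaluation at $x^k$ gives the reference bound $\Phi_k(x_k^\star)\le \Phi_k(x^k)=\Phi(x^k)$.

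Next I would translate the $\hat\epsilon_k$-primal--dual stationarity of $(x^{k+1},y^{k+1})$ for \eqref{ppa-subprob} provided by Lemma~\ref{innercplx-alg6} into an approximate first-order condition for $\Phi_k$ at $x^{k+1}$. Writing $y_k^\star(x):=\argmax_y \bH_k(x,y)$, which is single-valued by $\sigma_y$-strong concavity, the $y$-side stationarity and strong monotonicity yield $\|y^{k+1}-y_k^\star(x^{k+1})\|\le \hat\epsilon_k/\sigma_y$. Since the proximal term $L_{\nabla \bh}\|x-x^k\|^2$ is $y$-independent, $\nabla_x \bh_k(x,\cdot)$ is $L_{\nabla \bh}$-Lipschitz, so Danskin's identity $\partial \Phi_k(x)=\nabla_x \bh_k(x,y_k^\star(x))+\partial \np(x)$ together with the triangle inequality and the $x$-side stationarity of $(x^{k+1},y^{k+1})$ gives
\[
\dist(0,\partial \Phi_k(x^{k+1}))\le \hat\epsilon_k(1+L_{\nabla \bh}/\sigma_y).
\]
Invoking $L_{\nabla \bh}$-strong convexity of $\Phi_k$ together with the elementary bound $(1+a)^2\le 2(1+a^2)$ yields
\[
\Phi_k(x^{k+1})-\Phi_k(x_k^\star)\le \frac{\hat\epsilon_k^{2}(1+\sigma_y^{-2}L_{\nabla \bh}^{2})}{L_{\nabla \bh}}.
\]
Substituting $\Phi_k(x^{k+1})=\Phi(x^{k+1})+L_{\nabla \bh}\|x^{k+1}-x^k\|^2$ and $\Phi_k(x_k^\star)\le \Phi(x^k)$ rearranges this into the key one-step descent inequality
\[
\Phi(x^{k+1})+L_{\nabla \bh}\|x^{k+1}-x^k\|^2 \le \Phi(x^k)+\frac{\hat\epsilon_k^{2}(1+\sigma_y^{-2}L_{\nabla \bh}^{2})}{L_{\nabla \bh}}.
\]

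Finally I would sum this inequality from $k=0$ to $K$ and telescope. Combining $\Phi(x^{K+1})\ge \bH^{*}$ with $\hat\epsilon_k=\hat\epsilon_0/(k+1)$ and $\sum_{k\ge 0}(k+1)^{-2}\le 2$, dividing by $L_{\nabla \bh}(K+1)$, and passing from the average to the minimum yields \eqref{mmax-xbnd}; dropping the nonnegative term $L_{\nabla \bh}\|x^{k+1}-x^k\|^2$ before telescoping instead delivers \eqref{K-upperbnd}. The main technical obstacle will be the Danskin-style derivation of $\dist(0,\partial \Phi_k(x^{k+1}))\le \hat\epsilon_k(1+L_{\nabla \bh}/\sigma_y)$, which requires careful bookkeeping of the convex nonsmooth term $\np$ (shared by $\bH_k$ and $\Phi_k$) and of the Lipschitz dependence of $\nabla_x \bh_k$ on $y$.
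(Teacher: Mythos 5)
Your proposal is correct and follows essentially the same route as the paper: both work with the perturbed hyper-objective $\max_y\bH_k(\cdot,y)=\max_y\bH(\cdot,y)+L_{\nabla \bh}\|\cdot-x^k\|^2$, show it is $L_{\nabla \bh}$-strongly convex, convert the $\hat\epsilon_k$-primal--dual stationarity of $(x^{k+1},y^{k+1})$ into an approximate subgradient of size $\hat\epsilon_k(1+L_{\nabla \bh}/\sigma_y)$ via the bound $\|y^{k+1}-y_k^\star(x^{k+1})\|\le\hat\epsilon_k/\sigma_y$ and the $y$-Lipschitzness of $\nabla_x\bh$, and then telescope the resulting one-step descent inequality (summing with $\sum_k(k+1)^{-2}<2$ for both conclusions). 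The only immaterial difference is that you route through the minimizer $x_k^\star$ of $\Phi_k$ and the bound $\Phi_k(x^{k+1})-\Phi_k(x_k^\star)\le\dist(0,\partial\Phi_k(x^{k+1}))^2/(2L_{\nabla \bh})$, whereas the paper applies the strong-convexity inequality directly between $x^{k+1}$ and $x^k$ followed by Young's inequality; both give identical constants.
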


\begin{proof}
For convenience of the proof, let
\begin{align}
&\bH^*(x)=\max\limits_y\bH(x,y),\label{mmax-tg}\\
&\bH^*_k(x)=\max\limits_y \bH_k(x,y),\quad y^{k+1}_*=\argmax_y\bH_k(x^{k+1},y). \label{mmax-ystar}
\end{align}
One can observe from these, \eqref{ppa-subprob} and \eqref{mmax-sub} that
\beq \label{mmax-gk}
H^*_k(x)=\bH^*(x)+L_{\nabla \bh}\|x-x^k\|^2.  
\eeq
By this and Assumption~\ref{mmax-a}, one can also see that $H^*_k$ is $L_{\nabla \bh}$-strongly convex on $\dom\,p$.
In addition, recall from Lemma \ref{innercplx-alg6} that $(x^{k+1},y^{k+1})$ is an $\hat\epsilon_k$-primal-dual stationary point of problem \eqref{ppa-subprob} for all $0\leq k\in\bbT-1$. It then follows from Definition~\ref{def2} that there exist some $u\in\partial_x \bH_k(x^{k+1},y^{k+1})$ and $v\in\partial_y \bH_k(x^{k+1},y^{k+1})$ with $\|u\|\leq\hat\epsilon_k$ and $\|v\|\leq\hat\epsilon_k$. Also, by \eqref{mmax-ystar}, one has $0\in\partial_y \bH_k(x^{k+1},y^{k+1}_*)$, which, together with $v\in\partial_y \bH_k(x^{k+1},y^{k+1})$ and  $\sigma_y$-strong concavity of $\bH_k(x^{k+1},\cdot)$, implies that
$\langle -v,y^{k+1}-y^{k+1}_*\rangle\geq \sigma_y\|y^{k+1}-y^{k+1}_*\|^2$. 
This and $\|v\|\leq\hat\epsilon_k$ yield  
\begin{equation}\label{mmax-e1}
\|y^{k+1}-y^{k+1}_*\|\leq\sigma_y^{-1}\hat\epsilon_k.
\end{equation}
In addition, by $u\in\partial_x \bH_k(x^{k+1},y^{k+1})$, \eqref{ppa-subprob} and \eqref{mmax-sub}, one has \begin{equation}\label{mmax-e2}
u\in\nabla_x\bh(x^{k+1},y^{k+1})+\partial p(x^{k+1})+2L_{\nabla \bh}(x^{k+1}-x^k).
\end{equation}
Also, observe from \eqref{ppa-subprob}, \eqref{mmax-sub} and \eqref{mmax-ystar} that
\begin{equation*}
\partial H^*_k(x^{k+1})=\nabla_x\bh(x^{k+1},y^{k+1}_*)+\partial p(x^{k+1})+2L_{\nabla \bh}(x^{k+1}-x^k),
\end{equation*}
which together with \eqref{mmax-e2} yields
\begin{equation*}
u+\nabla_x\bh(x^{k+1},y^{k+1}_*)-\nabla_x\bh(x^{k+1},y^{k+1})\in\partial H^*_k(x^{k+1}).
\end{equation*}
By this and $L_{\nabla \bh}$-strong convexity of $H^*_k$, one has
\begin{equation}\label{strongcvx}
H^*_k(x^k)\geq H^*_k(x^{k+1})+\langle u+\nabla_x\bh(x^{k+1},y^{k+1}_*)-\nabla_x\bh(x^{k+1},y^{k+1}),x^k-x^{k+1}\rangle+L_{\nabla \bh}\|x^k-x^{k+1}\|^2/2.
\end{equation}
Using this, \eqref{mmax-gk}, \eqref{mmax-e1}, \eqref{strongcvx}, $\|u\|\leq\hat\epsilon_k$, and the Lipschitz continuity of $\nabla \bh$, we obtain
\begin{align*}
&\bH^*(x^k)-\bH^*(x^{k+1})\overset{\eqref{mmax-gk}}{=}H^*_k(x^k)- H^*_k(x^{k+1})+L_{\nabla\bh}\|x^k-x^{k+1}\|^2\\
&\overset{\eqref{strongcvx}}{\geq}\langle u+\nabla_x\bh(x^{k+1},y^{k+1}_*)-\nabla_x\bh(x^{k+1},y^{k+1}),x^k-x^{k+1}\rangle+3L_{\nabla \bh}\|x^k-x^{k+1}\|^2/2\\
&\ \geq \big(-\|u+\nabla_x\bh(x^{k+1},y^{k+1}_*)-\nabla_x\bh(x^{k+1},y^{k+1})\|\|x^k-x^{k+1}\|+L_{\nabla \bh}\|x^k-x^{k+1}\|^2/2\big) +L_{\nabla \bh}\|x^k-x^{k+1}\|^2 \\
&\ \geq -(2L_{\nabla \bh})^{-1}\|u+\nabla_x\bh(x^{k+1},y^{k+1}_*)-\nabla_x\bh(x^{k+1},y^{k+1})\|^2+L_{\nabla \bh}\|x^k-x^{k+1}\|^2 \\
&\ \geq-L_{\nabla \bh}^{-1}\|u\|^2-L_{\nabla \bh}^{-1}\|\nabla_x \bh(x^{k+1},y^{k+1}_*)-\nabla_x \bh(x^{k+1},y^{k+1})\|^2+L_{\nabla \bh}\|x^k-x^{k+1}\|^2\\
&\ \geq-L_{\nabla \bh}^{-1}\hat\epsilon_k^2-L_{\nabla\bh}\|y^{k+1}-y^{k+1}_*\|^2+L_{\nabla\bh}\|x^k-x^{k+1}\|^2\\
&\overset{\eqref{mmax-e1}}{\geq} -(L_{\nabla\bh}^{-1}+\sigma_y^{-2}L_{\nabla\bh})\hat\epsilon_k^2+L_{\nabla\bh}\|x^k-x^{k+1}\|^2, 
\end{align*}
where the second and fourth inequalities follow from Cauchy-Schwartz inequality, and the third inequality is due to Young's inequality, and the fifth inequality follows from $L_{\nabla \bh}$-Lipschitz continuity of $\nabla \bh$. Summing up the above inequality for $k=0,1,\dots, K$ yields
\begin{equation}\label{mmax-sum}
L_{\nabla \bh}\sum_{k=0}^K\|x^k-x^{k+1}\|^2\leq\bH^*(x^0)-\bH^*(x^{K+1})+(L_{\nabla\bh}^{-1}+\sigma_y^{-2}L_{\nabla \bh})\sum_{k=0}^K\hat\epsilon_k^2.
\end{equation}
In addition, it follows from \eqref{mmax-prob}, \eqref{mmax-D} and \eqref{mmax-tg} that
\begin{align}
&\bH^*(x^{K+1})=\max_y\bH(x^{K+1},y)\geq\min_x\max_y\bH(x,y)=\bH^*,\quad \bH^*(x^0)=\max_y\bH(x^0,y).\label{hhx0}
\end{align}
These together with \eqref{mmax-sum} yield
\begin{align*}
L_{\nabla \bh}(K+1)\min_{0\leq k\leq K}\|x^{k+1}-x^k\|^2\leq&\ L_{\nabla \bh}\sum_{k=0}^K\|x^k-x^{k+1}\|^2\\
\leq&\ \max_y\bH(x^0,y)-\bH^*+(L_{\nabla\bh}^{-1}+\sigma_y^{-2}L_{\nabla \bh})\sum_{k=0}^K\hat\epsilon_k^2,
\end{align*}
which, together with $x^0=\hat x^0$, $\hat\epsilon_k=\hat\epsilon_0(k+1)^{-1}$ and $\sum_{k=0}^K(k+1)^{-2}<2$, implies that \eqref{mmax-xbnd} holds.

Finally, we show that \eqref{K-upperbnd} holds. Indeed, it follows from \eqref{mmax-D}, \eqref{mmax-tg}, \eqref{mmax-sum}, \eqref{hhx0}, $\hat\epsilon_k=\hat\epsilon_0(k+1)^{-1}$, and $\sum_{k=0}^K(k+1)^{-2}<2$ that
\begin{align*}
\max_y\bH(x^{K+1},y) \overset{\eqref{mmax-tg}}{=}&\bH^*(x^{K+1}) 
\overset{\eqref{mmax-sum}}{\leq}\ \hh(x^0)+(L_{\nabla\bh}^{-1}+\sigma_y^{-2}L_{\nabla \bh})\sum_{k=0}^K\hat\epsilon_k^2\\
\overset{\eqref{hhx0}}{\leq}&\ \max_y\bH(x^0,y)+2\hat\epsilon_0^2(L_{\nabla\bh}^{-1}+\sigma_y^{-2}L_{\nabla \bh}).
\end{align*}
It then follows from this and $x^0=\hat x^0$ that \eqref{K-upperbnd} holds.
\end{proof}

We are now ready to prove Theorem~\ref{mmax-thm} using Lemmas \ref{innercplx-alg6} and \ref{output-prop}.

\begin{proof}[\textbf{Proof of Theorem~\ref{mmax-thm}}]
Suppose for contradiction that Algorithm~\ref{mmax-alg2} runs for more than $\wT+1$ outer iterations, where $\wT$ is given in \eqref{mmax-K}. By this and Algorithm~\ref{mmax-alg2}, one can then assert that \eqref{mmax-term} does not hold for all $0\leq k\leq \widehat T$. On the other hand, by \eqref{mmax-K} and \eqref{mmax-xbnd}, one has
\begin{align*}
\min_{0\leq k\leq \wT}\|x^{k+1}-x^k\|^2\ \overset{\eqref{mmax-xbnd}}{\leq}\ \frac{\max_y\bH(\hat x^0,y)-\bH^*}{L_{\nabla \bh}(\wT+1)}+\frac{2\hat\epsilon_0^{2}(1+\sigma_y^{-2}L_{\nabla \bh}^2)}{L_{\nabla \bh}^2(\wT+1)}\overset{\eqref{mmax-K}}{\leq}\frac{\epsilon^2}{16L_{\nabla \bh}^2},
\end{align*}
which implies that there exists some $0\leq k\leq \wT$ such that $\|x^{k+1}-x^k\|\leq\epsilon/(4L_{\nabla \bh})$,
and hence  \eqref{mmax-term} holds for such $k$, which contradicts the above assertion. Hence, Algorithm~\ref{mmax-alg2} must terminate in at most $\wT+1$ outer iterations.

Suppose that Algorithm~\ref{mmax-alg2} terminates at some iteration $0\leq k\leq \wT$, namely,   \eqref{mmax-term} holds for such $k$. We next show that its output $(\xe,\ye)=(x^{k+1},y^{k+1})$ is an $\epsilon$-primal-dual stationary point of \eqref{mmax-prob} and moreover it satisfies 
\eqref{upperbnd}. Indeed, recall from Lemma \ref{innercplx-alg6} that $(x^{k+1},y^{k+1})$ is an $\hat\epsilon_k$-primal-dual stationary point of 
\eqref{ppa-subprob}, namely, it satisfies $\dist(0,\partial_x \bH_k(x^{k+1},y^{k+1})) \leq \hat\epsilon_k$ and $\dist(0,\partial_y \bH_k(x^{k+1},y^{k+1})) \leq \hat\epsilon_k$. By these, \eqref{mmax-prob}, \eqref{ppa-subprob} and \eqref{mmax-sub},  there exists $(u,v)$ such that 
\begin{align*}
&u\in\partial_{x}\bH(x^{k+1},y^{k+1})+2L_{\nabla \bh}(x^{k+1}-x^k),\quad\|u\|\leq\hat\epsilon_k,\\
&v\in\partial_{y}\bH(x^{k+1},y^{k+1}),\quad\|v\|\leq\hat\epsilon_k.
\end{align*}
It then follows that $u-2L_{\nabla \bh}(x^{k+1}-x^k)\in\partial_{x}\bH(x^{k+1},y^{k+1})$ and $v\in\partial_{y}\bH(x^{k+1},y^{k+1})$. These together with \eqref{mmax-term}, \eqref{mmax-D},  and $\hat\epsilon_k\leq\hat\epsilon_0\leq\epsilon/2$ (see Algorithm~\ref{mmax-alg2}) imply that
\begin{align*}
&\dist\left(0,\partial_x\bH(x^{k+1},y^{k+1})\right)\leq\|u-2L_{\nabla \bh}(x^{k+1}-x^k)\|\leq\|u\|+2L_{\nabla \bh}\|x^{k+1}-x^k\|\overset{\eqref{mmax-term}}{\leq}\hat\epsilon_k+\epsilon/2\leq\epsilon,\\
&\dist\left(0,\partial_y\bH(x^{k+1},y^{k+1})\right)\leq\|v\|\leq\hat\epsilon_k<\epsilon.
\end{align*}
Hence, the output $(x^{k+1},y^{k+1})$ of Algorithm~\ref{mmax-alg2} is an $\epsilon$-primal-dual stationary point of \eqref{mmax-prob}. In addition, 
\eqref{upperbnd-old} holds due to Lemma \ref{output-prop}.

Recall from Lemma \ref{innercplx-alg6} that the number of evaluations of $\nabla \bh$ and proximal operators of $p$ and $q$ performed at iteration $k$ of Algorithm~\ref{mmax-alg2} is at most $\hat N_k$, respectively, where $\hat N_k$ is defined in \eqref{mmax-Nk}. Also, one can observe from the above proof and the definition of $\bbT$ that $|\bbT|\leq \wT+2$. It then follows that the total number of evaluations of $\nabla \bh$ and proximal operators of $p$ and $q$ in Algorithm~\ref{mmax-alg2} is respectively no more than $\sum_{k=0}^{|\bbT|-2}\hat N_k$. 
Consequently, to complete the rest of the proof of Theorem~\ref{mmax-thm}, it suffices to show that $\sum_{k=0}^{|\bbT|-2}\hat N_k \leq \widehat N$, where $\widehat N$ is given 
in \eqref{mmax-N-old}. Indeed, by \eqref{mmax-N-old}, \eqref{mmax-Nk} and $|\bbT|\leq \wT+2$, one has
\begin{align*}
&\sum_{k=0}^{|\bbT|-2}\hat N_k\overset{\eqref{mmax-Nk}}{\leq}\sum_{k=0}^{\wT}3397\times\Bigg\lceil\max\left\{2,\sqrt{\frac{L_{\nabla \bh}}{2\sigma_y}}\right\}\notag\\
&\ \ \ \ \times\log\frac{4\max\left\{\frac{1}{2L_{\nabla \bh}},\min\left\{\frac{1}{2\sigma_y},\frac{4}{\halpha L_{\nabla \bh}}\right\}\right\}\left( \hdelta+2\halpha^{-1}(\bH^*-\bH_{\rm low}+L_{\nabla \bh} D_\bfx^2)\right)}{\left[9L_{\nabla \bh}^2/\min\{L_{\nabla \bh},\sigma_y\}+ 3L_{\nabla \bh}\right]^{-2}\hat\epsilon_k^2}\Bigg\rceil_+\\
&\leq3397\times\max\left\{2,\sqrt{\frac{L_{\nabla \bh}}{2\sigma_y}}\right\}\notag\\
&\ \ \ \ \times\sum_{k=0}^{\wT}\left(\left(\log\frac{4\max\left\{\frac{1}{2L_{\nabla \bh}},\min\left\{\frac{1}{2\sigma_y},\frac{4}{\halpha L_{\nabla \bh}}\right\}\right\}\left( \hdelta+2\halpha^{-1}(\bH^*-\bH_{\rm low}+L_{\nabla \bh} D_\bfx^2)\right)}{\left[9L_{\nabla \bh}^2/\min\{L_{\nabla \bh},\sigma_y\}+ 3L_{\nabla \bh}\right]^{-2}\hat\epsilon_k^2}\right)_++1\right)\\
&\leq3397\times\max\left\{2,\sqrt{\frac{L_{\nabla \bh}}{2\sigma_y}}\right\}\notag\\
&\ \ \ \ \times\Bigg((\wT+1)\Bigg(\log\frac{4\max\left\{\frac{1}{2L_{\nabla \bh}},\min\left\{\frac{1}{2\sigma_y},\frac{4}{\halpha L_{\nabla \bh}}\right\}\right\}\left( \hdelta+2\halpha^{-1}(\bH^*-\bH_{\rm low}+L_{\nabla \bh} D_\bfx^2)\right)}{\left[9L_{\nabla \bh}^2/\min\{L_{\nabla \bh},\sigma_y\}+ 3L_{\nabla \bh}\right]^{-2}\hat\epsilon_0^2}\Bigg)_+\\
&\ \ \ \ +\wT+1+2\sum_{k=0}^{\wT}\log(k+1) \Bigg)\overset{\eqref{mmax-N-old}}{\leq} \widehat N,
\end{align*}
where the last inequality is due to \eqref{mmax-N-old} and $\sum_{k=0}^{\wT}\log(k+1) \leq \wT\log(\wT+1)$. This completes the proof of Theorem~\ref{mmax-thm}.
\end{proof}

\subsection{Proof of the main results in Section \ref{sec:main}} \label{sec4.3}
In this subsection, we provide a proof of our main result presented in Section \ref{sec:main}, which is particularly Theorem \ref{complexity}. Before proceeding, 
let
\begin{align}
\AL_\bfy(x,y,\lambda_\bfy;\rho)=F(x,y)-\frac{1}{2\rho}\left(\|[\lambda_\bfy+\rho d(x,y)]_+\|^2-\|\lambda_\bfy\|^2\right).\label{y-AL}
\end{align}
In view of \eqref{AL}, \eqref{fstarx} and \eqref{y-AL}, one can observe that
\beq\label{p-ineq}
f^*(x)\leq\max_y\AL_\bfy(x,y,\lambda_\bfy;\rho)\qquad \forall x\in\mcX,\ \lambda_\bfy\in\bR_+^{\tm},\ \rho>0,
\eeq
which will be frequently used later. 

We next establish several lemmas that will be used to prove Theorem \ref{complexity} subsequently.  The next lemma provides an upper bound for $\{\lambda^k_\bfy\}_{k\in\bbK}$.

\begin{lemma}\label{l-lycnstr}
Suppose that Assumptions \ref{a1} and \ref{mfcq} hold. Let $\{\lambda^k_\bfy\}_{k\in\bbK}$ be generated by Algorithm \ref{AL-alg},  $D_\bfy$ and $\Delta$ be defined in \eqref{mmax-D} and \eqref{def-r}, and $\tau$, and $\rho_k$ be given in  Algorithm \ref{AL-alg}. Then we have
\beq\label{ly-cnstr}
\rho_k^{-1}\|\lambda^k_\bfy\|^2\leq \|\lambda_\bfy^0\|^2+\frac{2(\Delta+D_\bfy)}{1-\tau}  \qquad \forall 0\leq k\in\bbK-1. 
\eeq
\end{lemma}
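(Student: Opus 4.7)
The plan is to track the weighted quantity $u_k := \rho_k^{-1}\|\lambda^k_\bfy\|^2$, which is exactly the left-hand side of \eqref{ly-cnstr}, and show that it satisfies a contractive one-step recursion $u_{k+1} \le \tau u_k + (\text{bounded residual})$ whose telescoping produces the $(1-\tau)^{-1}$ factor on the right-hand side of \eqref{ly-cnstr}.

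Reading off from the definition of $\AL_\bfy$ in \eqref{y-AL} together with the multiplier update $\lambda^{k+1}_\bfy=[\lambda^k_\bfy+\rho_kd(x^{k+1},y^{k+1})]_+$ yields the identity
\[
F(x^{k+1},y^{k+1})-\AL_\bfy(x^{k+1},y^{k+1},\lambda^k_\bfy;\rho_k)=\frac{1}{2\rho_k}\bigl(\|\lambda^{k+1}_\bfy\|^2-\|\lambda^k_\bfy\|^2\bigr),
\]
so controlling the multiplier increment reduces to upper-bounding the left-hand side. I would exploit two facts: (a) $\AL-\AL_\bfy=\AL_\bfx-F$ is independent of $y$, so any approximate-$y$-maximization statement for $\AL$ transfers verbatim to $\AL_\bfy$; and (b) by step 3 of Algorithm~\ref{AL-alg} and Definition~\ref{def2}, $(x^{k+1},y^{k+1})$ is an $\epsilon_k$-primal-dual stationary point of the AL subproblem. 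Combining (a), (b) with concavity of $\AL$ in $y$, a subgradient $g\in\partial_y\AL$ at $y^{k+1}$ with $\|g\|\le\epsilon_k$ yields, for every $y\in\mcY$,
\[
\AL_\bfy(x^{k+1},y^{k+1},\lambda^k_\bfy;\rho_k)\ \geq\ \AL_\bfy(x^{k+1},y,\lambda^k_\bfy;\rho_k)-\epsilon_kD_\bfy.
\]

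Next, plugging in the uniform Slater point $y=\hat y_{x^{k+1}}$ from Assumption~\ref{mfcq}(ii), for which $d(x^{k+1},\hat y_{x^{k+1}})\le-\delta_d\mathbf{1}\le 0$, the definition of $\AL_\bfy$ immediately gives $\AL_\bfy(x^{k+1},\hat y_{x^{k+1}},\lambda^k_\bfy;\rho_k)\ge F(x^{k+1},\hat y_{x^{k+1}})\ge F_{\rm low}$, since $[\lambda^k_\bfy+\rho_kd(x^{k+1},\hat y_{x^{k+1}})]_+\le\lambda^k_\bfy$ componentwise forces the bracketed term in $\AL_\bfy$ to be nonpositive. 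Combined with $F(x^{k+1},y^{k+1})\le F_{\rm hi}$ and the crucial cancellation $\rho_k\epsilon_k=1$ (built into the choice $\rho_k=\epsilon_k^{-1}$), this delivers
\[
\|\lambda^{k+1}_\bfy\|^2-\|\lambda^k_\bfy\|^2\ \le\ 2\rho_k\Delta+2D_\bfy.
\]
Multiplying through by $\rho_{k+1}^{-1}=\tau\rho_k^{-1}$ yields $u_{k+1}\le\tau u_k+2\tau\Delta+2\tau^{k+1}D_\bfy$ with $u_0=\|\lambda^0_\bfy\|^2$. Telescoping gives $u_k\le\tau^k u_0+2\Delta\tau(1-\tau^k)/(1-\tau)+2D_\bfy\,k\tau^k$, and the elementary estimate $k\tau^k\le 1/(1-\tau)$ (which follows from $\sum_{j=0}^{k-1}\tau^j\ge k\tau^{k-1}\ge k\tau^k$ since $\tau\in(0,1)$) yields \eqref{ly-cnstr}; the $k=0$ case is trivial.

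The main technical care lies in the first step: one must correctly invoke the convention $\partial_y\AL=-\partial_y(-\AL)$ for the upper-semicontinuous (concave-in-$y$) function $\AL$ so that $\|g\|\le\epsilon_k$ translates into the linear-in-$\epsilon_k$ value gap $\epsilon_kD_\bfy$ (and not a weaker $\sqrt{\epsilon_k}$-type bound), and then to observe that the $y$-independence of $\AL-\AL_\bfy$ permits replacing $\AL$ by $\AL_\bfy$ without loss. Once these are in hand, the remaining work is routine geometric-sum bookkeeping.
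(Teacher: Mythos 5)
Your proof is correct and, as far as one can tell, follows essentially the same route as the paper's (omitted) argument, which defers to Lemma 5 of \cite{lu2024first}: the identity $\tfrac{1}{2\rho_k}(\|\lambda^{k+1}_\bfy\|^2-\|\lambda^k_\bfy\|^2)=F(x^{k+1},y^{k+1})-\AL_\bfy(x^{k+1},y^{k+1},\lambda^k_\bfy;\rho_k)$, the $\epsilon_k D_\bfy$ slack from approximate $y$-stationarity combined with the Slater point $\hat y_{x^{k+1}}$ to get $\AL_\bfy\ge F_{\rm low}-\epsilon_kD_\bfy$, the cancellation $\rho_k\epsilon_k=1$, and the geometric weighting by $\rho_{k+1}^{-1}=\tau\rho_k^{-1}$ with telescoping are exactly the standard ingredients here, and all of your steps (including the transfer from $\partial_y\AL$ to $\AL_\bfy$ via $y$-independence of $\AL-\AL_\bfy$, and the estimate $k\tau^k\le(1-\tau)^{-1}$) check out.
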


\begin{proof}
Its proof is similar to that of \cite[Lemma 5]{lu2024first} and thus omitted.
\end{proof}

The following lemma establishes an upper bound on $\|[d(x^{k+1},y^{k+1})]_+\|$ for $0\leq k\in\bbK-1$.

\begin{lemma}\label{l-ycnstr}
Suppose that Assumptions \ref{a1} and \ref{mfcq} hold. Let $D_\bfy$ and $\Delta$ be defined in \eqref{mmax-D} and \eqref{def-r}, $\delta_d$ be given in Assumption \ref{mfcq}, and $\tau$, $\epsilon_k$ and $\rho_k$ be given in Algorithm \ref{AL-alg}. Suppose that $(x^{k+1},y^{k+1}, \lambda^{k+1}_\bfy)$ is generated by Algorithm \ref{AL-alg} for some $0\leq k\in\bbK-1$ with 
\beq\label{muk-bnd}
\rho_k\geq\frac{4\|\lambda_\bfy^0\|^2}{\delta_d^2}+\frac{8(\Delta+D_\bfy)}{\delta_d^2(1-\tau)}.
\eeq
Then we have
\beq\label{y-cnstr}
\|[d(x^{k+1},y^{k+1})]_+\|\leq\rho_k^{-1}\|\lambda^{k+1}_\bfy\|\leq2 \rho_k^{-1}\delta_d^{-1}(\Delta+D_\bfy).
\eeq
\end{lemma}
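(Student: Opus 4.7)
The first inequality is immediate from the multiplier update rule. Since $\lambda^{k+1}_\bfy=[\lambda^k_\bfy+\rho_k d(x^{k+1},y^{k+1})]_+$ with $\lambda^k_\bfy\ge 0$ componentwise and $\rho_k>0$, we have $(\lambda^{k+1}_\bfy)_i\ge \rho_k[d_i(x^{k+1},y^{k+1})]_+$ for every $i$, whence $\|[d(x^{k+1},y^{k+1})]_+\|\le\rho_k^{-1}\|\lambda^{k+1}_\bfy\|$. The substance of the lemma is therefore the second inequality.

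My plan for the second inequality is to combine the $\epsilon_k$-stationarity of $y^{k+1}$ for the AL subproblem~\eqref{AL-sub} with the uniform Slater point $\hat y_{x^{k+1}}$ from Assumption~\ref{mfcq}(ii). A direct chain-rule calculation on the penalty term --- crucially using that the positive part $[\lambda^k_\bfy+\rho_k d(x^{k+1},y^{k+1})]_+$ appearing in the gradient of $\tfrac{1}{2\rho_k}\|[\cdot]_+\|^2$ equals $\lambda^{k+1}_\bfy$ --- gives
\[
\partial_y\AL(x^{k+1},y^{k+1},\lambda^k_\bfx,\lambda^k_\bfy;\rho_k)=\nabla_y f(x^{k+1},y^{k+1})-\partial q(y^{k+1})-\nabla_y d(x^{k+1},y^{k+1})\lambda^{k+1}_\bfy.
\]
By Definition~\ref{def2} there exist $\xi\in\partial q(y^{k+1})$ and $v$ with $\|v\|\le\epsilon_k$ such that $v=\nabla_y f(x^{k+1},y^{k+1})-\xi-\nabla_y d(x^{k+1},y^{k+1})\lambda^{k+1}_\bfy$. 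Pairing this identity with $\hat y_{x^{k+1}}-y^{k+1}$ and invoking $\sigma$-strong concavity of $f(x^{k+1},\cdot)$, convexity of $q$, componentwise convexity of $d(x^{k+1},\cdot)$, together with $d_i(x^{k+1},\hat y_{x^{k+1}})\le-\delta_d$, $\|\hat y_{x^{k+1}}-y^{k+1}\|\le D_\bfy$, $\epsilon_k\le 1$, and $F(x^{k+1},y^{k+1})-F(x^{k+1},\hat y_{x^{k+1}})\le\Delta$, a standard rearrangement should yield
\[
\sum_{i=1}^{\tm}(\lambda^{k+1}_\bfy)_i\bigl(\delta_d+d_i(x^{k+1},y^{k+1})\bigr)\;\le\;\Delta+\epsilon_k D_\bfy\;\le\;\Delta+D_\bfy.
\]

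To turn this into the desired norm bound on $\lambda^{k+1}_\bfy$, I will exploit the complementarity relation $(\lambda^{k+1}_\bfy)_i\bigl((\lambda^{k+1}_\bfy)_i-(\lambda^k_\bfy)_i-\rho_k d_i(x^{k+1},y^{k+1})\bigr)=0$ built into the projection update; summing over $i$ and applying Cauchy--Schwarz yields $\sum_i d_i(x^{k+1},y^{k+1})(\lambda^{k+1}_\bfy)_i=\rho_k^{-1}\bigl(\|\lambda^{k+1}_\bfy\|^2-\langle\lambda^k_\bfy,\lambda^{k+1}_\bfy\rangle\bigr)\ge -\rho_k^{-1}\|\lambda^k_\bfy\|\|\lambda^{k+1}_\bfy\|$, while $\sum_i\delta_d(\lambda^{k+1}_\bfy)_i=\delta_d\|\lambda^{k+1}_\bfy\|_1\ge\delta_d\|\lambda^{k+1}_\bfy\|$. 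Substituting, the previous display collapses to $\|\lambda^{k+1}_\bfy\|\bigl(\delta_d-\rho_k^{-1}\|\lambda^k_\bfy\|\bigr)\le\Delta+D_\bfy$. Finally, Lemma~\ref{l-lycnstr} provides $\rho_k^{-1}\|\lambda^k_\bfy\|\le\sqrt{\rho_k^{-1}\bigl(\|\lambda_\bfy^0\|^2+2(\Delta+D_\bfy)/(1-\tau)\bigr)}$, and the threshold~\eqref{muk-bnd} is precisely calibrated so that this quantity is at most $\delta_d/2$; dividing then delivers $\|\lambda^{k+1}_\bfy\|\le 2\delta_d^{-1}(\Delta+D_\bfy)$, which is the claim. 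The main obstacle I anticipate is keeping the subdifferential calculus honest at the first step --- recognizing that the \emph{post-update} multiplier $\lambda^{k+1}_\bfy$ is exactly the coefficient emerging from the chain rule on the quadratic penalty, which allows the awkward positive part in $\partial_y\AL$ to be absorbed cleanly. Once that identification is secured, the remainder is bookkeeping with the Slater constant $\delta_d$ and the calibrated choice of $\rho_k$ from~\eqref{muk-bnd}.
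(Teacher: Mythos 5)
Your proof is correct. The paper itself omits the argument (deferring to the analogous Lemma~6 of \cite{lu2024first}), but the route you take --- extracting $\lambda^{k+1}_\bfy=[\lambda^k_\bfy+\rho_k d(x^{k+1},y^{k+1})]_+$ from the chain rule on the quadratic penalty, pairing the $\epsilon_k$-dual-stationarity condition with the uniform Slater point $\hat y_{x^{k+1}}$ via concavity of $f(x^{k+1},\cdot)$, convexity of $q$ and of each $d_i(x^{k+1},\cdot)$, then using the complementarity identity $(\lambda^{k+1}_\bfy)_i\bigl((\lambda^{k+1}_\bfy)_i-(\lambda^k_\bfy)_i-\rho_k d_i\bigr)=0$ together with Lemma~\ref{l-lycnstr} and the calibration \eqref{muk-bnd} to get $\rho_k^{-1}\|\lambda^k_\bfy\|\le\delta_d/2$ --- is exactly the intended argument, and every step checks out (only plain concavity of $f(x^{k+1},\cdot)$ is actually needed, not strong concavity).
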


\begin{proof}
Its proof is similar to that of \cite[Lemma 6]{lu2024first} and thus omitted.
\end{proof}

\begin{lemma}\label{l-subdcnstr}
Suppose that Assumptions \ref{a1} and \ref{mfcq} hold. Let  $D_\bfy$ and $\Delta$ be defined in  \eqref{mmax-D} and \eqref{def-r}, and $\delta_d$ be given in Assumption \ref{mfcq}, $\tau$, $\epsilon_k$, $\rho_k$ and $\lambda_\bfy^0$ be given in Algorithm \ref{AL-alg}.
Suppose that $(x^{k+1},y^{k+1}, \lambda^{k+1}_\bfx,\lambda^{k+1}_\bfy)$ is generated by Algorithm \ref{AL-alg} for  some $0\leq k\in\bbK-1$ with 
\beq\label{muk-1}
\rho_k\geq\frac{4\|\lambda_\bfy^0\|^2}{\delta_d^2\tau}+\frac{8(\Delta+D_\bfy)}{\delta_d^2\tau(1-\tau)}.
\eeq
Let 
\[
 \tl^{k+1}_\bfx=[\lambda^k_\bfx+\rho_kc(x^{k+1})]_+.
\]
Then we have
\begin{align}
& \dist(0,\partial_x F(x^{k+1},y^{k+1})+\nabla c(x^{k+1})\tl^{k+1}_\bfx -\nabla_xd(x^{k+1},y^{k+1})\lambda^{k+1}_\bfy) \leq \epsilon_k, \nn \\ 
& \dist\left(0,\partial_yF(x^{k+1},y^{k+1})-\nabla_y d(x^{k+1},y^{k+1})\lambda^{k+1}_\bfy\right)\leq\epsilon_k, \nn \\ 
& \|[d(x^{k+1},y^{k+1})]_+\|\leq 2\rho_k^{-1}\delta_d^{-1}(\Delta+D_\bfy), \nn \\ 
&|\langle\lambda^{k+1}_\bfy,d(x^{k+1},y^{k+1})\rangle|\leq 2\rho_k^{-1}\delta_d^{-1}(\Delta+D_\bfy) \max\{\|\lambda_\bfy^0\|,\ 2\delta_d^{-1}(\Delta+D_\bfy)\}. \nn 
\end{align}
\end{lemma}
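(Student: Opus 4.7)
The proof breaks into three pieces matching the four conclusions. For the two stationarity bounds, the plan is to compute $\partial_x\AL$ and $\partial_y\AL$ directly from \eqref{AL}. A routine chain-rule calculation gives
\[
\partial_x\AL(x,y,\lambda_\bfx,\lambda_\bfy;\rho)=\partial_xF(x,y)+\nabla c(x)[\lambda_\bfx+\rho c(x)]_+-\nabla_xd(x,y)[\lambda_\bfy+\rho d(x,y)]_+,
\]
together with the analogous identity for $\partial_y\AL$ in which only the $d$-term survives because the $c$-penalty is independent of $y$. Evaluating both at $(x^{k+1},y^{k+1},\lambda^k_\bfx,\lambda^k_\bfy;\rho_k)$ and substituting the definitions $\tl^{k+1}_\bfx=[\lambda^k_\bfx+\rho_kc(x^{k+1})]_+$ and $\lambda^{k+1}_\bfy=[\lambda^k_\bfy+\rho_kd(x^{k+1},y^{k+1})]_+$ reproduces exactly the multipliers that appear in the first two target inequalities. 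Since step~3 of Algorithm~\ref{AL-alg} guarantees that $(x^{k+1},y^{k+1})$ is an $\epsilon_k$-primal-dual stationary point of \eqref{AL-sub}, the first two bounds follow from Definition~\ref{def2}.

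For the feasibility estimate, the plan is to invoke Lemma~\ref{l-ycnstr} directly. The hypothesis \eqref{muk-1} is strictly stronger than \eqref{muk-bnd} since $\tau\in(0,1)$, so Lemma~\ref{l-ycnstr} applies and yields $\|[d(x^{k+1},y^{k+1})]_+\|\leq 2\rho_k^{-1}\delta_d^{-1}(\Delta+D_\bfy)$ together with the a priori bound $\|\lambda^{k+1}_\bfy\|\leq 2\delta_d^{-1}(\Delta+D_\bfy)$, which I will reuse below.

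The \textbf{main obstacle} is the complementary slackness estimate. The key tool is a coordinatewise identity coming from the projection $\lambda^{k+1}_\bfy=[\lambda^k_\bfy+\rho_k d(x^{k+1},y^{k+1})]_+$: on each coordinate $i$, either $\lambda^{k+1}_{\bfy,i}=0$, in which case $\lambda^{k+1}_{\bfy,i}d_i(x^{k+1},y^{k+1})=0=\lambda^{k+1}_{\bfy,i}\lambda^k_{\bfy,i}$, or $\lambda^{k+1}_{\bfy,i}=\lambda^k_{\bfy,i}+\rho_kd_i(x^{k+1},y^{k+1})$. Summing over $i$ in both cases produces the identity
\[
\rho_k\langle\lambda^{k+1}_\bfy,d(x^{k+1},y^{k+1})\rangle=\|\lambda^{k+1}_\bfy\|^2-\langle\lambda^{k+1}_\bfy,\lambda^k_\bfy\rangle.
\]
Because $\lambda^{k+1}_\bfy,\lambda^k_\bfy\geq 0$ componentwise, the right-hand side is a difference of two nonnegative numbers, so its absolute value is bounded by their maximum; combined with Cauchy--Schwarz this yields $|\rho_k\langle\lambda^{k+1}_\bfy,d(x^{k+1},y^{k+1})\rangle|\leq\|\lambda^{k+1}_\bfy\|\max\{\|\lambda^{k+1}_\bfy\|,\|\lambda^k_\bfy\|\}$. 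To conclude, bound $\|\lambda^{k+1}_\bfy\|$ by $2\delta_d^{-1}(\Delta+D_\bfy)$ as already noted, and bound $\|\lambda^k_\bfy\|$ by $\max\{\|\lambda_\bfy^0\|,2\delta_d^{-1}(\Delta+D_\bfy)\}$ via a case split: when $k=0$ it equals $\|\lambda^0_\bfy\|$, while when $k\geq 1$ we apply Lemma~\ref{l-ycnstr} at index $k-1$, using that $\rho_{k-1}=\tau\rho_k$ so hypothesis \eqref{muk-1} at index $k$ is precisely what is needed to verify hypothesis \eqref{muk-bnd} at index $k-1$. Dividing through by $\rho_k$ delivers the fourth inequality.
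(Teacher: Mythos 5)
Your proposal is correct and complete. The paper omits this proof entirely (deferring to Lemma 7 of \cite{lu2024first}), but your argument is exactly the standard route: identifying $\partial_x\AL$ and $\partial_y\AL$ so that $\epsilon_k$-stationarity of the subproblem transfers directly, invoking Lemma~\ref{l-ycnstr} for feasibility, and using the projection identity $\rho_k\langle\lambda^{k+1}_\bfy,d(x^{k+1},y^{k+1})\rangle=\|\lambda^{k+1}_\bfy\|^2-\langle\lambda^{k+1}_\bfy,\lambda^k_\bfy\rangle$ together with the shift of Lemma~\ref{l-ycnstr} to index $k-1$ (which is precisely what the extra factor $\tau^{-1}$ in \eqref{muk-1} is for) — the same identity the paper itself uses for the $\lambda_\bfx$ part in the proof of Lemma~\ref{l-xcnstr2}.
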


\begin{proof}
Its proof is similar to that of \cite[Lemma 7]{lu2024first} and thus omitted.
\end{proof}

The following lemma provides an upper bound on $\max_y\AL(x^k_{\rm init},y,\lambda^k_\bfx,\lambda^k_\bfy;\rho_k)$ for $0\leq k\in\bbK-1$, which will subsequently be used to derive an upper bound for  $\max_y\AL(x^{k+1},y,\lambda^k_\bfx,\lambda^k_\bfy;\rho_k)$.

\begin{lemma}
Suppose that Assumptions \ref{a1}, \ref{knownfeas} and \ref{mfcq} hold. Let $\{(\lambda^k_\bfx,\lambda^k_\bfy)\}_{k\in\bbK}$ be generated by Algorithm \ref{AL-alg}, $\AL$, $D_\bfy$, $F_{\rm hi}$ and $\Delta$ be defined in \eqref{AL}, \eqref{mmax-D}, \eqref{Fhi} and \eqref{def-r}, and $\tau$, $\rho_k$, $\Lambda$ and $x^k_{\rm init}$ be given in Algorithm \ref{AL-alg}. Then for all $0\leq k\in\bbK-1$, we have
\beq\label{init-upper}
\max_y\AL(x^k_{\rm init},y,\lambda^k_\bfx,\lambda^k_\bfy;\rho_k)\leq \Delta+F_{\rm hi}+\Lambda+\frac{1}{2}(\tau^{-1}+\|\lambda_\bfy^0\|^2)+\frac{\Delta+D_\bfy}{1-\tau}.
\eeq
\end{lemma}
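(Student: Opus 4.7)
The plan is to decompose $\AL(x^k_{\rm init},y,\lambda^k_\bfx,\lambda^k_\bfy;\rho_k)$ according to \eqref{AL} into three pieces: the value $F(x^k_{\rm init},y)$, the $y$-independent $\bfx$-penalty $\frac{1}{2\rho_k}(\|[\lambda^k_\bfx+\rho_k c(x^k_{\rm init})]_+\|^2-\|\lambda^k_\bfx\|^2)$, and the $y$-dependent $\bfy$-penalty $-\frac{1}{2\rho_k}(\|[\lambda^k_\bfy+\rho_k d(x^k_{\rm init},y)]_+\|^2-\|\lambda^k_\bfy\|^2)$. Taking $\max_y$ and using that the maximum of a sum is at most the sum of the maximums, the first piece is bounded by $F_{\rm hi}$ by \eqref{Fhi}, the second piece is retained verbatim, and the third piece is controlled by dropping the nonnegative $\|[\cdot]_+\|^2$ term, giving $\max_y(-\text{\bfy-penalty})\le \frac{1}{2\rho_k}\|\lambda^k_\bfy\|^2$. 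Lemma~\ref{l-lycnstr} then bounds this last quantity by $\tfrac{1}{2}\|\lambda_\bfy^0\|^2+\tfrac{\Delta+D_\bfy}{1-\tau}$.

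Next I would bound the remaining $\bfx$-penalty at $x^k_{\rm init}$ by invoking the defining property of $x^k_{\rm init}$. Because the $\bfx$-penalty does not depend on $y$, the inequality $\AL_\bfx(x^k_{\rm init},y^k,\lambda^k_\bfx;\rho_k)\le\AL_\bfx(x_\bff,y^k,\lambda^k_\bfx;\rho_k)$ rearranges to
\[
\tfrac{1}{2\rho_k}\left(\|[\lambda^k_\bfx+\rho_k c(x^k_{\rm init})]_+\|^2-\|\lambda^k_\bfx\|^2\right)\le F(x_\bff,y^k)-F(x^k_{\rm init},y^k)+\tfrac{1}{2\rho_k}\left(\|[\lambda^k_\bfx+\rho_k c(x_\bff)]_+\|^2-\|\lambda^k_\bfx\|^2\right),
\]
and the $F$-difference is at most $\Delta=F_{\rm hi}-F_{\rm low}$ by \eqref{Fhi}--\eqref{def-r}.

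The technically most delicate step, and the main obstacle, is to bound the penalty evaluated at $x_\bff$. A naive expansion via $(a+b)^2\le 2a^2+2b^2$ would generate a spurious $\Lambda^2$ contribution that is absent from the target bound. I would avoid this by using the componentwise inequality $[\lambda^k_\bfx+\rho_k c(x_\bff)]_+\le \lambda^k_\bfx+\rho_k[c(x_\bff)]_+$ (valid since $\lambda^k_\bfx\ge 0$), which yields $\|[\lambda^k_\bfx+\rho_k c(x_\bff)]_+\|^2-\|\lambda^k_\bfx\|^2\le 2\rho_k\langle\lambda^k_\bfx,[c(x_\bff)]_+\rangle+\rho_k^2\|[c(x_\bff)]_+\|^2$, so
\[
\tfrac{1}{2\rho_k}\left(\|[\lambda^k_\bfx+\rho_k c(x_\bff)]_+\|^2-\|\lambda^k_\bfx\|^2\right)\le \Lambda\sqrt{\varepsilon}+\tfrac{\rho_k\varepsilon}{2},
\]
where I use $\|\lambda^k_\bfx\|\le\Lambda$ together with $\|[c(x_\bff)]_+\|\le\sqrt{\varepsilon}$ from Assumption~\ref{knownfeas}. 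The crucial quantitative ingredient is then $\rho_k\varepsilon\le\tau^{-1}$, which follows from $k\le K=\lceil\log\varepsilon/\log\tau\rceil$ via $\tau^K\ge\tau\varepsilon$ and hence $\rho_k\le\rho_K\le(\tau\varepsilon)^{-1}$; together with $\varepsilon\le 1$ (implied by \eqref{cond}) this gives $\Lambda\sqrt{\varepsilon}+\tfrac{\rho_k\varepsilon}{2}\le \Lambda+\tfrac{1}{2}\tau^{-1}$. Combining all three steps produces $F_{\rm hi}+\Delta+\Lambda+\tfrac{1}{2}\tau^{-1}+\tfrac{1}{2}\|\lambda_\bfy^0\|^2+\tfrac{\Delta+D_\bfy}{1-\tau}$, which is exactly \eqref{init-upper}.
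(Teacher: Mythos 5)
Your proof is correct, and although the paper omits its own argument (deferring to Lemma 8 of \cite{lu2024first}), your derivation is evidently the intended one: each term of the bound \eqref{init-upper} arises exactly as in your decomposition ($F_{\rm hi}$ from maximizing $F$, $\Delta$ from the $F$-difference after invoking the defining property of $x^k_{\rm init}$, $\Lambda+\tfrac{1}{2}\tau^{-1}$ from the penalty at $x_\bff$ via $\|[c(x_\bff)]_+\|\le\sqrt{\varepsilon}$ and $\rho_k\le(\tau\varepsilon)^{-1}$, and $\tfrac{1}{2}\|\lambda^0_\bfy\|^2+\tfrac{\Delta+D_\bfy}{1-\tau}$ from Lemma~\ref{l-lycnstr}). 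The componentwise inequality $[\lambda^k_\bfx+\rho_k c(x_\bff)]_+\le\lambda^k_\bfx+\rho_k[c(x_\bff)]_+$ is the right way to avoid the spurious $\Lambda^2$ term, and the only cosmetic remark is that $\varepsilon\le 1$ is already an input requirement of Algorithm~\ref{AL-alg}, so no appeal to \eqref{cond} is needed.
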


\begin{proof}
Its proof is similar to that of \cite[Lemma 8]{lu2024first} and thus omitted.
\end{proof}

The next lemma shows that an approximate primal-dual stationary point  of \eqref{AL-sub} is found at each iteration of Algorithm~\ref{AL-alg}, and also provides an estimate of operation complexity for finding it.

\begin{lemma}\label{l-subp}
Suppose that Assumptions \ref{a1}, \ref{knownfeas} and \ref{mfcq} hold. Let $D_\bfx$, $D_\bfy$, $L_k$, $F_{\rm hi}$ and $\Delta$ be defined in  \eqref{mmax-D}, \eqref{Lk}, \eqref{Fhi} and \eqref{def-r}, $\tau$, $\epsilon_k$,  $\rho_k$, $\Lambda$ and $\lambda_\bfy^0$ be given in Algorithm \ref{AL-alg}, and
\begin{align}
\alpha_k=&\ \min\left\{1,\sqrt{8\sigma/L_k}\right\},\label{mmax-omega}\\
\delta_k=&\ (2+\alpha_k^{-1})L_k D_\bfx^2+\max\left\{2\sigma,\alpha_k L_k/4\right\}D_\bfy^2,\label{mmax-zeta}\\
M_k=&\ \frac{16\max\left\{1/(2L_k),\min\left\{1/(2\sigma),4/(\alpha_k L_k)\right\}\right\}\rho_k}{\left[9L_k^2/\min\{L_k,\sigma\}+ 3L_k\right]^{-2}\epsilon_k^2}\nn\\
&\times\Bigg(\delta_k+2\alpha_k^{-1}\bigg(\Delta+\frac{\Lambda^2}{2\rho_k}+\frac{3}{2}\|\lambda_\bfy^0\|^2+\frac{3(\Delta+D_\bfy)}{1-\tau}+\rho_kd_{\rm hi}^2+L_k D_\bfx^2\bigg)\Bigg)\label{mmax-Mk}\\
T_k=&\ \Bigg\lceil16\left(2\Delta+\Lambda+\frac{1}{2}(\tau^{-1}+\|\lambda_\bfy^0\|^2)+\frac{\Delta+D_\bfy}{1-\tau}+\frac{\Lambda^2}{2\rho_k}\right) L_k\epsilon_k^{-2}\nn\\
&\ +8(1+\sigma^{-2}L_k^2)\epsilon_k^2-1\Bigg\rceil_+,\label{mmax-Tk}\\
N_k=&\ 3397\max\left\{2,\sqrt{L_k/(2\sigma)}\right\}\times\left((T_k+1)(\log M_k)_++T_k+1+2T_k\log(T_k+1) \right).\label{mmax-N}
\end{align}
Then for all $0\leq k\in\bbK-1$, Algorithm~\ref{AL-alg} finds an $\epsilon_k$-primal-dual stationary point $(x^{k+1},y^{k+1})$ of problem \eqref{AL-sub} that satisfies 
\begin{align}
\max_y\AL(x^{k+1},y,\lambda^k_\bfx,\lambda^k_\bfy;\rho_k)\leq&\ \Delta+F_{\rm hi}+\Lambda+\frac{1}{2}(\tau^{-1}+\|\lambda_\bfy^0\|^2)+\frac{\Delta+D_\bfy}{1-\tau}\nn\\
&\ +\frac{1}{2}\left(L_k^{-1}+\sigma^{-2}L_k\right)\epsilon_k^2.\label{upperbnd}
\end{align}
Moreover, the total number of evaluations of $\nabla f$, $\nabla c$, $\nabla d$ and proximal operators of $p$ and $q$ performed in iteration $k$ of Algorithm~\ref{AL-alg} is no more than $N_k$, respectively.
\end{lemma}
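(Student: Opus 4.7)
\textbf{Proof proposal for Lemma \ref{l-subp}.}

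The plan is to recognize problem \eqref{AL-sub} as an instance of the nonconvex–strongly-concave unconstrained minimax template \eqref{mmax-prob} and then apply Theorem \ref{mmax-thm} with the parameter substitutions indicated in step~3 of Algorithm~\ref{AL-alg}. Write $\AL(x,y,\lambda^k_\bfx,\lambda^k_\bfy;\rho_k)=\bh_k(x,y)+p(x)-q(y)$, where $\bh_k$ collects $f$ and the two quadratic penalty terms. The first task is to verify that $\bh_k$ satisfies Assumption \ref{mmax-a} with constants $L_{\nabla\bh}\leftarrow L_k$ and $\sigma_y\leftarrow\sigma$. Strong concavity in $y$ follows from Assumption \ref{a1}(i) ($\sigma$-strong concavity of $f(x,\cdot)$) together with Assumption \ref{a1}(iii) (convexity of each $d_i(x,\cdot)$), which makes $-\tfrac{1}{2\rho_k}\|[\lambda^k_\bfy+\rho_k d(x,y)]_+\|^2$ concave in $y$. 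The $L_k$-smoothness of $\bh_k$ is a direct componentwise calculation on the two penalty terms using the $L_c$-, $L_d$-Lipschitz continuity and $L_{\nabla c}$-, $L_{\nabla d}$-smoothness of $c$ and $d$ together with the bounds $\|c(x)\|\le c_{\rm hi}$, $\|d(x,y)\|\le d_{\rm hi}$, which yields exactly the expression \eqref{Lk}.

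With the setup in place, invoking Theorem \ref{mmax-thm} on problem \eqref{AL-sub} with $\hat x^0=x^k_{\rm init}$, $\hat\epsilon_0=\epsilon_k/2$, and the substitutions listed above immediately produces an $\epsilon_k$-primal-dual stationary point $(x^{k+1},y^{k+1})$ of \eqref{AL-sub}. The bound \eqref{upperbnd} follows by chaining \eqref{upperbnd-old} (applied to the AL subproblem) with the starting-point bound \eqref{init-upper}, and using $\hat\epsilon_0=\epsilon_k/2$ to convert the constant $2\hat\epsilon_0^2$ into $\tfrac12\epsilon_k^2$.

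The remaining, and most delicate, step is the translation of the generic complexity quantities $\wT$ and $\widehat N$ of Theorem \ref{mmax-thm} into the explicit expressions $T_k$ and $N_k$ of \eqref{mmax-Tk}--\eqref{mmax-N}. For $T_k$, the quantity $\max_y \bH(\hat x^0,y)-\bH^*$ appearing in \eqref{mmax-K} must be upper-bounded by $\max_y\AL(x^k_{\rm init},y,\lambda^k_\bfx,\lambda^k_\bfy;\rho_k)-\min_x\max_y\AL(x,y,\lambda^k_\bfx,\lambda^k_\bfy;\rho_k)$. The first term is bounded by \eqref{init-upper}; for the second, I would lower-bound $\min_x\max_y\AL$ by plugging in a pointwise Slater point $\hat y_x$ (Assumption \ref{mfcq}(ii)), noting $d(x,\hat y_x)\le 0$ so that $\|[\lambda^k_\bfy+\rho_k d(x,\hat y_x)]_+\|\le\|\lambda^k_\bfy\|$ and $\|[\lambda^k_\bfx+\rho_k c(x)]_+\|^2-\|\lambda^k_\bfx\|^2\ge-\Lambda^2$, obtaining $\min_x\max_y\AL\ge F_{\rm low}-\Lambda^2/(2\rho_k)$. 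Subtracting, using $\Delta=F_{\rm hi}-F_{\rm low}$, and plugging into \eqref{mmax-K} with $\hat\epsilon_0=\epsilon_k/2$ reproduces \eqref{mmax-Tk}.

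For $N_k$, the log argument in \eqref{mmax-N-old} involves $\hdelta+2\halpha^{-1}(\bH^*-\bH_{\rm low}+L_{\nabla\bh}D_\bfx^2)$. Here $\bH^*$ is bounded above as before, and $\bH_{\rm low}=\min_{(x,y)\in\mcX\times\mcY}\AL$ is bounded below by applying $[\lambda^k_\bfy+\rho_k d(x,y)]_+\le\lambda^k_\bfy+\rho_k[d(x,y)]_+$ componentwise, giving $\|[\lambda^k_\bfy+\rho_k d(x,y)]_+\|^2\le 2\|\lambda^k_\bfy\|^2+2\rho_k^2 d_{\rm hi}^2$, and then using Lemma~\ref{l-lycnstr} (i.e.\ \eqref{ly-cnstr}) to control $\rho_k^{-1}\|\lambda^k_\bfy\|^2$. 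Combining these upper and lower bounds, substituting $\halpha\to\alpha_k$, $\hdelta\to\delta_k$, $\hat\epsilon_0=\epsilon_k/2$, and simplifying the prefactor yields the constant $M_k$ and therefore the complexity estimate $N_k$. The main obstacle will be purely bookkeeping: carefully matching the numerator/denominator constants in the logarithm and prefactor of \eqref{mmax-N-old} to those in \eqref{mmax-Mk}--\eqref{mmax-N}, while absorbing the terms $\rho_k d_{\rm hi}^2$, $\Lambda^2/(2\rho_k)$, and the $\|\lambda_\bfy^0\|^2$-contribution from \eqref{ly-cnstr} into the correct places inside the factor multiplying $2\alpha_k^{-1}$.
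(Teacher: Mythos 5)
Your proposal follows essentially the same route as the paper's proof: identify \eqref{AL-sub} as an instance of \eqref{mmax-prob} with $L_{\nabla\bh}\leftarrow L_k$ and $\sigma_y\leftarrow\sigma$, invoke Theorem \ref{mmax-thm} with $\hat x^0=x^k_{\rm init}$ and $\hat\epsilon_0=\epsilon_k/2$, and then instantiate $\wT$ and $\widehat N$ by bounding $\max_y\bH(x^k_{\rm init},y)$ via \eqref{init-upper}, $\bH^*$ from below via a feasible $\hat y_x$ (equivalently the paper's \eqref{p-ineq}), and $\bH_{\rm low}$ from below via \eqref{ly-cnstr} and $d_{\rm hi}$. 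The one place where your sketch is loose is the \emph{upper} bound on $\bH^*$ entering $M_k$: "as before" suggests reusing \eqref{init-upper}, which would yield a larger constant than \eqref{mmax-Mk}; to recover the stated $M_k$ one must instead evaluate $\max_y\AL(x^*,y,\lambda^k_\bfx,\lambda^k_\bfy;\rho_k)$ at an optimal (hence $c$-feasible) $x^*$ of \eqref{prob}, giving $\bH^*\le F_{\rm hi}+\tfrac12\|\lambda_\bfy^0\|^2+\tfrac{\Delta+D_\bfy}{1-\tau}$ -- a detail within the bookkeeping you flag, but needed for the exact constants.
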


\begin{proof}
Observe from \eqref{prob} and \eqref{AL} that problem \eqref{AL-sub} can be viewed as
\[
\min_x\max_y\{h(x,y)+p(x)-q(y)\},
\]
where
\[
h(x,y)=f(x,y)+\frac{1}{2\rho _k}\left(\|[\lambda^k_\bfx+\rho_k c(x)]_+\|^2-\|\lambda^k_\bfx\|^2\right)-\frac{1}{2\rho_k}\left(\|[\lambda^k_\bfy+\rho_k d(x,y)]_+\|^2-\|\lambda^k_\bfy\|^2\right).
\]
Notice that
\begin{align*}
& \nabla_x h(x,y)=\nabla_x f(x,y)+\nabla c(x)[\lambda^k_\bfx+\rho_kc(x)]_++\nabla_x d(x,y)[\lambda^k_\bfy+\rho_kd(x,y)]_+, \\
& \nabla_y h(x,y)=\nabla_y f(x,y)+\nabla_y d(x,y)[\lambda^k_\bfy+\rho_kd(x,y)]_+.
\end{align*}
It follows from Assumption \ref{a1}(iii) that 
\[
\|\nabla c(x)\|\leq L_c,\quad\|\nabla d(x,y)\| \leq L_d \qquad \forall (x,y)\in\mcX\times\mcY.
\]
In view of the above relations, \eqref{cdhi} and Assumption \ref{a1}, one can observe that  $\nabla c(x)[\lambda^k_\bfx+\rho_kc(x)]_+$ is $(\rho_kL_c^2+\rho_kc_{\rm hi}L_{\nabla c}+\|\lambda^k_\bfx\| L_{\nabla c})$-Lipschitz continuous on $\mcX$, and $\nabla d(x,y)[\lambda^k_\bfy+\rho_kd(x,y)]_+$ is $(\rho_kL_d^2+\rho_kd_{\rm hi}L_{\nabla d}+\|\lambda^k_\bfy\|L_{\nabla d})$-Lipschitz continuous on $\mcX\times\mcY$. Using these and the fact that $\nabla f(x,y)$ is $L_{\nabla f}$-Lipschitz continuous on $\mcX\times\mcY$ and $f(x,\cdot)$ is $\sigma$-strongly-concave on $\mcY$ for all $x\in\mcX$, we can see that $h(x,\cdot)$ is $\sigma$-strongly-concave on $\mcY$, and $h(x,y)$ is $L_k$-smooth on $\mcX\times\mcY$ for all $0\leq k\in\bbK-1$, where $L_k$ is given in \eqref{Lk}. Consequently, it follows from Theorem \ref{mmax-thm} that Algorithm \ref{mmax-alg2} can be suitably applied to problem \eqref{AL-sub} for finding an $\epsilon_k$-primal-dual stationary point $(x^{k+1},y^{k+1})$ of it.

In addition, by \eqref{AL}, \eqref{F-gap}, \eqref{y-AL}, \eqref{p-ineq} and $\|\lambda^k_\bfx\|\leq\Lambda$ (see Algorithm \ref{AL-alg}), one has
\begin{align}
&\ \min_x\max_y\AL(x,y,\lambda^k_\bfx,\lambda^k_\bfy;\rho_k)\overset{\eqref{AL} \eqref{y-AL}}=\min_x\max_y\left\{\AL_\bfy(x,y,\lambda^k_\bfy;\rho_k)+\frac{1}{2\rho_k}\left(\|[\lambda^k_\bfx+\rho_k c(x)]_+\|^2-\|\lambda^k_\bfx\|^2\right)\right\}\nn\\
&\overset{\eqref{p-ineq}}{\geq}\min_x \left\{f^*(x)+\frac{1}{2\rho_k}\left(\|[\lambda^k_\bfx+\rho_k c(x)]_+\|^2-\|\lambda^k_\bfx\|^2\right)\right\} \overset{\eqref{F-gap}}{\geq} F_{\rm low}-\frac{1}{2\rho_k}\|\lambda^k_\bfx\|^2\geq F_{\rm low}-\frac{\Lambda^2}{2\rho_k}.\label{e1}
\end{align}
Let $(x^*,y^*)$ be an optimal solution of \eqref{prob}. It then follows that $c(x^*)\leq0$. Using this, \eqref{AL}, \eqref{Fhi} and \eqref{ly-cnstr}, we obtain that
\begin{align}
&\ \ \min_x\max_y\AL(x,y,\lambda^k_\bfx,\lambda^k_\bfy;\rho_k)\leq\max_y\AL(x^*,y,\lambda^k_\bfx,\lambda^k_\bfy;\rho_k)\nn\\
&\ \overset{\eqref{AL}}{=}\max_y\left\{F(x^*,y)+\frac{1}{2\rho_k}\left(\|[\lambda^k_\bfx+\rho_kc(x^*)]_+\|^2-\|\lambda^k_\bfx\|^2\right)-\frac{1}{2\rho_k}\left(\|[\lambda^k_\bfy+\rho_k d(x^*,y)]_+\|^2-\|\lambda^k_\bfy\|^2\right)\right\}\nn\\
&\ \leq\ \max_y\left\{F(x^*,y)-\frac{1}{2\rho_k}\left(\|[\lambda^k_\bfy+\rho_k d(x^*,y)]_+\|^2-\|\lambda^k_\bfy\|^2\right)\right\}\nn\\
&\  \overset{\eqref{Fhi}}{\leq} F_{\rm hi}+\frac{1}{2\rho_k}\|\lambda^k_\bfy\|^2\overset{ \eqref{ly-cnstr}}{\leq}F_{\rm hi}+\frac{1}{2}\|\lambda_\bfy^0\|^2+\frac{\Delta+D_\bfy}{1-\tau},\label{e2}
\end{align}
where the second inequality is due to $c(x^*)\leq0$. Moreover, it follows from this, \eqref{AL}, \eqref{cdhi}, \eqref{Fhi}, \eqref{ly-cnstr}, $\lambda^k_\bfy\in\bR_+^{\tm}$ and $\|\lambda^k_\bfx\|\leq\Lambda$ that
\begin{align}
& \min_{(x,y)\in\mcX\times\mcY}\AL(x,y,\lambda^k_\bfx,\lambda^k_\bfy;\rho_k)\overset{\eqref{AL}}{\geq}\min_{(x,y)\in\mcX\times\mcY}\left\{F(x,y)-\frac{1}{2\rho_k}\|\lambda^k_\bfx\|^2-\frac{1}{2\rho_k}\|[\lambda^k_\bfy+\rho_kd(x,y)]_+\|^2\right\}\nn\\
&\ \geq\min_{(x,y)\in\mcX\times\mcY}\left\{F(x,y)-\frac{1}{2\rho_k}\|\lambda^k_\bfx\|^2-\frac{1}{2\rho_k}\left(\|\lambda^k_\bfy\|+\rho_k\|[d(x,y)]_+\|\right)^2\right\}\nn\\
&\ \geq\min_{(x,y)\in\mcX\times\mcY}\left\{F(x,y)-\frac{1}{2\rho_k}\|\lambda^k_\bfx\|^2-\rho_k^{-1}\|\lambda^k_\bfy\|^2-\rho_k\|[d(x,y)]_+\|^2\right\}\nn\\
&\ \geq F_{\rm low}-\frac{\Lambda^2}{2\rho_k}-\|\lambda_\bfy^0\|^2-\frac{2(\Delta+D_\bfy)}{1-\tau}-\rho_kd_{\rm hi}^2,\label{e3}
\end{align}
where the second inequality is due to $\lambda^k_\bfy\in\bR_+^{\tm}$ and the last inequality is due to \eqref{cdhi}, \eqref{Fhi}, \eqref{ly-cnstr} and $\|\lambda^k_\bfx\|\leq\Lambda$.

To complete the rest of the proof, let
\begin{align}
&H(x,y)=\AL(x,y,\lambda^k_\bfx,\lambda^k_\bfy;\rho_k),\quad H^*=\min_x\max_y\AL(x,y,\lambda^k_\bfx,\lambda^k_\bfy;\rho_k),\label{Hk1}\\
&H_{\rm low}=\min_{(x,y)\in\mcX\times\mcY}\AL(x,y,\lambda^k_\bfx,\lambda^k_\bfy;\rho_k).\label{Hk2}
\end{align}
In view of these, \eqref{init-upper}, \eqref{e1}, \eqref{e2}, \eqref{e3}, we obtain that
\begin{align*}
&\max_yH(x^k_{\rm init},y)\overset{\eqref{init-upper}}{\leq}\Delta+F_{\rm hi}+\Lambda+\frac{1}{2}(\tau^{-1}+\|\lambda_\bfy^0\|^2)+\frac{\Delta+D_\bfy}{1-\tau},\\
&F_{\rm low}-\frac{\Lambda^2}{2\rho_k}\overset{\eqref{e1}}{\leq} H^*\overset{\eqref{e2}}{\leq} F_{\rm hi}+\frac{1}{2}\|\lambda_\bfy^0\|^2+\frac{\Delta+D_\bfy}{1-\tau},\\
&H_{\rm low}\overset{\eqref{e3}}{\geq}F_{\rm low}-\frac{\Lambda^2}{2\rho_k}-\|\lambda_\bfy^0\|^2-\frac{2(\Delta+D_\bfy)}{1-\tau}-\rho_kd_{\rm hi}^2.
\end{align*}
Using these, \eqref{def-r}, and Theorem \ref{mmax-thm} with $x^0=x^k_{\rm init}$, $\epsilon=\epsilon_k$, $\hat\epsilon_0=\epsilon_k/2$, $L_{\nabla\bh}=L_k$, $\sigma_y=\sigma$,  $\halpha=\alpha_k$, $\hdelta=\delta_k$, and $\bH$, $\bH^*$, $\bH_{\rm low}$ given in \eqref{Hk1} and \eqref{Hk2}, we can conclude that Algorithm \ref{mmax-alg2} performs at most $N_k$ evaluations of $\nabla f$, $\nabla c$, $\nabla d$ and proximal operators of $p$ and $q$ for finding an $\epsilon_k$-primal-dual stationary point of problem \eqref{AL-sub} satisfying \eqref{upperbnd}.
\end{proof}

The following lemma provides an upper bound on $\|[c(x^{k+1})]_+\|$ and $ |\langle\tl^{k+1}_\bfx,c(x^{k+1})\rangle| $ for $0\leq k\in\bbK-1$, where $\tl^{k+1}_\bfx$ is given below.

\begin{lemma}\label{l-xcnstr2}
Suppose that Assumptions \ref{a1}, \ref{knownfeas} and \ref{mfcq} hold. Let $D_\bfy$, $\Delta$ and $L$ be defined in \eqref{mmax-D}, \eqref{def-r}  and \eqref{hL}, $L_F$, $L_c$, $\delta_c$ and $\theta$ be given in Assumption \ref{mfcq}, and $\tau$, $\rho_k$, $\Lambda$ and $\lambda_\bfy^0$ be given in Algorithm \ref{AL-alg}. Suppose that $(x^{k+1},\lambda^{k+1}_\bfx)$ is generated by Algorithm \ref{AL-alg} for some $0\leq k\in\bbK-1$ with 
\begin{align}
\rho_k \geq \max\Bigg\{&\theta^{-1}\Lambda, \theta^{-2}\Big\{4\Delta+2\Lambda+\tau^{-1}+\|\lambda_\bfy^0\|^2+\frac{2(\Delta+D_\bfy)}{1-\tau} \nn \\
& +L_c^{-2} +\sigma^{-2}L+\Lambda^2\Big\}, \frac{4\|\lambda_\bfy^0\|^2}{\delta_d^2\tau}+\frac{8(\Delta+D_\bfy)}{\delta_d^2\tau(1-\tau)}\Bigg\}. \label{rhok-bnd}
\end{align}
Let 
\beq \label{def-tlx1} 
 \tl^{k+1}_\bfx=[\lambda^k_\bfx+\rho_kc(x^{k+1})]_+.
\eeq
Then we have
\begin{align}
&\|[c(x^{k+1})]_+\|\leq \rho_k^{-1}\delta_c^{-1}\left(L_F +2L_d\delta_d^{-1}(\Delta+D_\bfy)+1\right), \label{x-cnstr2} \\
& |\langle\tl^{k+1}_\bfx,c(x^{k+1})\rangle| \leq \rho^{-1}_k \delta_c^{-1}(L_F +2L_d\delta_d^{-1}(\Delta+D_\bfy)+1)\max\{\delta_c^{-1}(L_F +2L_d\delta_d^{-1}(\Delta+D_\bfy)+1), \Lambda\}. \label{c-complim}
\end{align}
\end{lemma}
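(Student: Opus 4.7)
The plan is to follow the standard safeguarded AL feasibility analysis, leveraging the intermediate estimates already proved in this subsection. Since \eqref{rhok-bnd} subsumes both \eqref{muk-1} and \eqref{muk-bnd}, Lemma \ref{l-subdcnstr} provides the approximate stationarity condition with residual at most $\epsilon_k$, and Lemma \ref{l-ycnstr} yields the bound $\|\lambda^{k+1}_\bfy\|\leq 2\delta_d^{-1}(\Delta+D_\bfy)$, both of which I would treat as inputs.

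The first step is to verify that $x^{k+1}\in\cF(\theta)$, so that Assumption \ref{mfcq}(i) is applicable at $x^{k+1}$. Using the decomposition
\[
\AL(x,y,\lambda_\bfx,\lambda_\bfy;\rho)=\AL_\bfy(x,y,\lambda_\bfy;\rho)+\frac{1}{2\rho}\bigl(\|[\lambda_\bfx+\rho c(x)]_+\|^2-\|\lambda_\bfx\|^2\bigr),
\]
taking $\max_y$, combining the upper bound \eqref{upperbnd} from Lemma \ref{l-subp} with the lower bound $\max_y\AL_\bfy(x^{k+1},y,\lambda^k_\bfy;\rho_k)\geq f^*(x^{k+1})\geq F_{\rm low}$ from \eqref{F-gap} and \eqref{p-ineq}, and invoking the componentwise inequality $\rho_k[c_i(x^{k+1})]_+\leq[(\lambda^k_\bfx)_i+\rho_kc_i(x^{k+1})]_+$ (immediate from $\lambda^k_\bfx\geq 0$) together with $\|\lambda^k_\bfx\|\leq\Lambda$, I obtain an estimate on $\|[c(x^{k+1})]_+\|^2$ that decays like $\rho_k^{-1}$. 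The second clause of \eqref{rhok-bnd} is sized precisely so that this estimate is below $\theta^2$. A separate observation is that when $c_i(x^{k+1})<-\theta$ the condition $\rho_k\geq\theta^{-1}\Lambda$ forces $(\lambda^k_\bfx)_i+\rho_kc_i(x^{k+1})<0$, so $(\tl^{k+1}_\bfx)_i=0$ by \eqref{def-tlx1}; hence $\nabla c(x^{k+1})\tl^{k+1}_\bfx$ is supported on $\cA(x^{k+1};\theta)$.

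Next I would plug the robust MFCQ direction $v_{x^{k+1}}$ from Assumption \ref{mfcq}(i) into the approximate stationarity condition from Lemma \ref{l-subdcnstr}. Writing the residual as $s$ with $\|s\|\leq\epsilon_k$ and splitting $\partial_xF=\nabla_xf+\partial p$ into a selector $\nabla_xf(x^{k+1},y^{k+1})+g$ with $g\in\partial p(x^{k+1})$, one obtains
\[
\delta_c\|\tl^{k+1}_\bfx\|\leq\delta_c\|\tl^{k+1}_\bfx\|_1\leq-v_{x^{k+1}}^T\nabla c(x^{k+1})\tl^{k+1}_\bfx\leq\epsilon_k+L_F+L_d\|\lambda^{k+1}_\bfy\|,
\]
where $v_{x^{k+1}}^T(\nabla_xf+g)\leq L_F$ for $v_{x^{k+1}}\in\mcT_{\mcX}(x^{k+1})$ with unit norm follows from $L_F$-Lipschitz continuity of $F(\cdot,y^{k+1})$ on $\mcX$ via the convex-analytic inequality $g^Tv\leq p'(x^{k+1};v)$. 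Combined with $\|\lambda^{k+1}_\bfy\|\leq 2\delta_d^{-1}(\Delta+D_\bfy)$ and $\epsilon_k\leq1$, this delivers the key estimate $\|\tl^{k+1}_\bfx\|\leq\delta_c^{-1}(L_F+2L_d\delta_d^{-1}(\Delta+D_\bfy)+1)$. Applying once more the componentwise inequality from Step 1 gives $\rho_k\|[c(x^{k+1})]_+\|\leq\|\tl^{k+1}_\bfx\|$, yielding \eqref{x-cnstr2}. For complementarity, the projection identity $(\tl^{k+1}_\bfx)_i\bigl((\tl^{k+1}_\bfx)_i-(\lambda^k_\bfx)_i-\rho_kc_i(x^{k+1})\bigr)=0$ gives $\langle\tl^{k+1}_\bfx,c(x^{k+1})\rangle=\rho_k^{-1}(\|\tl^{k+1}_\bfx\|^2-\langle\tl^{k+1}_\bfx,\lambda^k_\bfx\rangle)$, and Cauchy--Schwarz with $\|\lambda^k_\bfx\|\leq\Lambda$ yields \eqref{c-complim}.

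The delicate part is the bookkeeping in the first step: matching the three constituents of the lower bound \eqref{rhok-bnd} on $\rho_k$ — the $\theta^{-1}\Lambda$ term, the large bracketed expression scaled by $\theta^{-2}$, and the $\delta_d$-based term — against the upper bound from \eqref{upperbnd}, the $\Lambda^2/(2\rho_k)$ slack from $\|\lambda^k_\bfx\|\leq\Lambda$, and the $\frac{1}{2}(L_k^{-1}+\sigma^{-2}L_k)\epsilon_k^2$ residual, so that the resulting estimate really does force $\|[c(x^{k+1})]_+\|\leq\theta$. The subdifferential manipulation in Step 3 is conceptually clean but noteworthy in that it relies on the directional-derivative bound $g^Tv\leq p'(x^{k+1};v)$ rather than any norm bound on $g\in\partial p(x^{k+1})$, since the subdifferential of $p$ may be unbounded on the relative boundary of $\mcX$.
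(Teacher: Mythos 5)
Your proposal is correct and follows essentially the same route as the paper's proof: the same AL-value sandwich (lower bound via \eqref{p-ineq} and \eqref{F-gap}, upper bound via \eqref{upperbnd}) to force $x^{k+1}\in\cF(\theta)$, the same use of the robust MFCQ direction together with the $\epsilon_k$-stationarity residual and the tangent-cone/Lipschitz bound $\langle s,v\rangle\leq L_F$ to control $\|\tl^{k+1}_\bfx\|_1$, and the same deduction of \eqref{x-cnstr2} and \eqref{c-complim}. The only cosmetic difference is in the complementarity step, where you derive the exact projection identity $\langle\tl^{k+1}_\bfx,c(x^{k+1})\rangle=\rho_k^{-1}(\|\tl^{k+1}_\bfx\|^2-\langle\tl^{k+1}_\bfx,\lambda^k_\bfx\rangle)$ rather than the paper's two one-sided inequalities; both yield the identical bound.
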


\begin{proof}
One can observe from \eqref{AL}, \eqref{F-gap}, \eqref{y-AL} and \eqref{p-ineq} that
\begin{align*}
\max_y\AL(x^{k+1},y,\lambda^k_\bfx,\lambda^k_\bfy;\rho_k)=\ &\ \max_y\AL_\bfy(x^{k+1},y,\lambda^k_\bfy;\rho_k)+\frac{1}{2\rho_k}\left(\|[\lambda^k_\bfx+\rho_k c(x^{k+1})]_+\|^2-\|\lambda^k_\bfx\|^2\right)\\
\overset{\eqref{p-ineq}}{\geq}&\ f^*(x^{k+1})+\frac{1}{2\rho_k}\left(\|[\lambda^k_\bfx+\rho_k c(x^{k+1})]_+\|^2-\|\lambda^k_\bfx\|^2\right)\\
\overset{\eqref{F-gap}}{\geq}\ &\ F_{\rm low}+\frac{1}{2\rho_k}\left(\|[\lambda^k_\bfx+\rho_k c(x^{k+1})]_+\|^2-\|\lambda^k_\bfx\|^2\right).
\end{align*}
By this inequality, \eqref{upperbnd} and $\|\lambda^k_\bfx\|\leq\Lambda$, one has
\begin{align*}
&\|[\lambda^k_\bfx+\rho_kc(x^{k+1})]_+\|^2\leq 2\rho_k\max_y\AL(x^{k+1},y,\lambda^k_\bfx,\lambda^k_\bfy;\rho_k)-2\rho_kF_{\rm low}+\|\lambda^k_\bfx\|^2\\
&\leq2\rho_k\max_y\AL(x^{k+1},y,\lambda^k_\bfx,\lambda^k_\bfy;\rho_k)-2\rho_k F_{\rm low}+\Lambda^2\\
&\overset{\eqref{upperbnd}}\leq 2\rho_k \Delta+2\rho_kF_{\rm hi}+2\rho_k\Lambda+\rho_k(\tau^{-1}+\|\lambda_\bfy^0\|^2)+\frac{2\rho_k(Delta+D_\bfy)}{1-\tau}\\
&\qquad +L_k^{-1}\epsilon_k^2+\sigma^{-2}L_k\epsilon_k^2-2\rho_k F_{\rm low}+\Lambda^2.
\end{align*}
This together with \eqref{def-r} and $\rho_k^2\|[c(x^{k+1})]_+\|^2\leq\|[\lambda^k_\bfx+\rho_kc(x^{k+1})]_+\|^2$ implies that
\begin{align} 
\|[c(x^{k+1})]_+\|^2\leq&\ \rho_k^{-1}\left(4\Delta+2\Lambda+\tau^{-1}+\|\lambda_\bfy^0\|^2+\frac{2(\Delta+D_\bfy)}{1-\tau}\right)\nn\\
&\ +\rho_k^{-2}\left(L_k^{-1}\epsilon_k^2+\sigma^{-2}L_k\epsilon_k^2+\Lambda^2\right). \label{x-cnstr}
\end{align}
In addition, we observe from \eqref{Lk}, \eqref{hL}, \eqref{ly-cnstr}, $\rho_k\geq1$ and $\|\lambda^k_\bfx\|\leq\Lambda$ that for all $0\leq k\leq K$,
\begin{align}
& \rho_kL_c^2\leq L_k= L_{\nabla f}+\rho_kL_c^2+\rho_kc_{\rm hi}L_{\nabla c}+\|\lambda^k_\bfx\|L_{\nabla c}+\rho_kL_d^2+\rho_kd_{\rm hi}L_{\nabla d}+\|\lambda^k_\bfy\|L_{\nabla d}\nn\\
&\leq L_{\nabla f}+\rho_kL_c^2+\rho_kc_{\rm hi}L_{\nabla c}+\Lambda L_{\nabla c}+\rho_kL_d^2+\rho_kd_{\rm hi}L_{\nabla d}\nn\\
&\ \ \ \ +L_{\nabla d}\sqrt{\rho_k\left(\|\lambda_\bfy^0\|^2+\frac{2(\Delta+D_\bfy)}{1-\tau}\right)}\leq\rho_kL.\label{L-ineq}
\end{align}
 Using this relation, \eqref{rhok-bnd}, \eqref{x-cnstr},  $\rho_k\geq1$ and $\epsilon_k \leq 1$, we have
\begin{align*}
\|[c(x^{k+1})]_+\|^2\leq&\ \rho_k^{-1}\left(4\Delta+2\Lambda+\tau^{-1}+\|\lambda_\bfy^0\|^2+\frac{2(\Delta+D_\bfy)}{1-\tau}\right)\nn\\
& \ +\rho_k^{-2}\left((\rho_kL_c^2)^{-1}\epsilon_k^2+\sigma^{-2}L\epsilon_k^2\rho_k+\Lambda^2\right)\\
\leq\ &\ \rho_k^{-1}\left(4\Delta+2\Lambda+\tau^{-1}+\|\lambda_\bfy^0\|^2+\frac{2(\Delta+D_\bfy)}{1-\tau}\right)\nn\\
& \ +\rho_k^{-1}\left(L_c^{-2}+4\sigma^{-2}L+\Lambda^2\right)\overset{\eqref{rhok-bnd}}{\leq}\theta^2,
\end{align*}
which together with \eqref{def-cAcS} implies that $x^{k+1}\in\cF(\theta)$.  

It follows from $x^{k+1}\in\cF(\theta)$ and Assumption \ref{mfcq}(i) that there exists some $v\in\mcT_{\mcX}(x^{k+1})$ such that $\|v\|=1$ and $v^T\nabla c_i(x^{k+1})\leq-\delta_c$ for all $i\in\cA(x^{k+1};\theta)$, where $\cA(x^{k+1};\theta)$ is defined in \eqref{def-cAcS}. Let $\bar\cA(x^{k+1};\theta)=\{1,2,\ldots,\tn\}\backslash\cA(x^{k+1};\theta)$. Notice from \eqref{def-cAcS} that $c_i(x^{k+1})<-\theta$ for all $i\in\bar\cA(x^{k+1};\theta)$.
In addition, observe from \eqref{rhok-bnd} that $\rho_k\geq\theta^{-1}\Lambda$. 
Using these and $\|\lambda^k_\bfx\|\leq\Lambda$, we obtain that $(\lambda^k_\bfx+\rho_kc(x^{k+1}))_i\leq\Lambda-\rho_k\theta\leq0$ for all $i\in\bar\cA(x^{k+1};\theta)$. By this and the fact that $v^T\nabla c_i(x^{k+1})\leq-\delta_c$ for all $i\in\cA(x^{k+1};\theta)$, one has
\begin{align}
&v^T\nabla c(x^{k+1})\tl^{k+1}_\bfx\overset{\eqref{def-tlx1}}=v^T\nabla c(x^{k+1})[\lambda^k_\bfx+\rho_kc(x^{k+1})]_+=\sum_{i=1}^\tn v^T\nabla c_i(x^{k+1})([\lambda^k_\bfx+\rho_kc(x^{k+1})]_+)_i\nn\\
&=\sum_{i\in\cA(x^{k+1};\theta)}v^T\nabla c_i(x^{k+1})([\lambda^k_\bfx+\rho_kc(x^{k+1})]_+)_i+\sum_{i\in\bar\cA(x^{k+1};\theta)}v^T\nabla c_i(x^{k+1})([\lambda^k_\bfx+\rho_kc(x^{k+1})]_+)_i\nn\\
&\leq-\delta_c \sum_{i\in\cA(x^{k+1};\theta)}([\lambda^k_\bfx+\rho_kc(x^{k+1})]_+)_i=-\delta_c  
\sum_{i=1}^\tn([\lambda^k_\bfx+\rho_kc(x^{k+1})]_+)_i \overset{\eqref{def-tlx1}}= -\delta_c\|\tl^{k+1}_\bfx\|_1. \label{mfcq-ineq}
\end{align}

Since $(x^{k+1},y^{k+1})$ is an $\epsilon_k$-primal-dual stationary point of \eqref{AL-sub}, it follows from \eqref{AL} and Definition \ref{def2} that there exists some $s\in\partial_xF(x^{k+1},y^{k+1})$ such that
\[
\|s+\nabla c(x^{k+1})[\lambda^k_\bfx+\rho_kc(x^{k+1})]_+-\nabla_xd(x^{k+1},y^{k+1})[\lambda^k_\bfy+\rho_kd(x^{k+1},y^{k+1})]_+\|\leq\epsilon_k,
\]
which along with \eqref{def-tlx1} and $\lambda^{k+1}_\bfy=[\lambda^k_\bfy+\rho_xd(x^{k+1},y^{k+1})]_+$ implies that
\beq\label{eps-subgrad}
\|s+\nabla c(x^{k+1})\tl^{k+1}_\bfx -\nabla_xd(x^{k+1},y^{k+1})\lambda^{k+1}_\bfy\|\leq\epsilon_k.
\eeq
In addition, since $v\in\mcT_{\mcX}(x^{k+1})$, there exist $\{z^t\} \subset \mcX$ and $\{\alpha_t\} \downarrow 0$ such that $z^t=x^{k+1}+\alpha_t v+o(\alpha_t)$ for all $t$.  Also, since $s\in\partial_xF(x^{k+1},y^{k+1})$, one has $s=\nabla_x f(x^{k+1},y^{k+1})+s_p$ for some $s_p\in\partial p(x^{k+1})$. Using these and Assumptions \ref{a1} and  \ref{mfcq}(iii), we have
\begin{align}
\langle s, v \rangle &=\langle \nabla_x f(x^{k+1},y^{k+1}), v \rangle + \lim_{t \to \infty} \alpha_t^{-1}\langle s_p, z^t-x^{k+1}\rangle  \nn \\
& = \lim_{t \to \infty} \alpha_t^{-1} (f(z^t,y^{k+1})-f(x^{k+1},y^{k+1})) + \lim_{t \to \infty} \alpha_t^{-1}\langle s_p, z^t-x^{k+1}\rangle \nn \\
& \leq \lim_{t \to \infty} \alpha_t^{-1} (f(z^t,y^{k+1})-f(x^{k+1},y^{k+1})) + \lim_{t \to \infty} \alpha_t^{-1} (p(z^t)-p(x^{k+1}))  \nn \\
& = \lim_{t \to \infty} \alpha_t^{-1} (F(z^t,y^{k+1})-F(x^{k+1},y^{k+1})) \leq L_F \lim_{t \to \infty} \alpha_t^{-1} \|z^t-x^{k+1}\| =L_F, \label{s-bnd}
\end{align}
where the second equality is due to the differentiability of $f$, the first inequality follows from the convexity of $p$ and $s_p\in\partial p(x^{k+1})$, the second inequality is due to the $L_F$-Lipschitz continuity of $F(\cdot, y^{k+1})$, and the last equality follows from $\lim_{t \to \infty} \alpha_t^{-1} \|z^t-x^{k+1}\|=\|v\|=1$.

By \eqref{mfcq-ineq}, \eqref{eps-subgrad}, \eqref{s-bnd},  and $\|v\|=1$, one has
\begin{align*}
\epsilon_k& \geq\|s+\nabla c(x^{k+1})\tl^{k+1}_\bfx -\nabla_xd(x^{k+1},y^{k+1})\lambda^{k+1}_\bfy\|\cdot\|v\|\\
&\geq\langle s+\nabla c(x^{k+1})\tl^{k+1}_\bfx -\nabla_xd(x^{k+1},y^{k+1})\lambda^{k+1}_\bfy,-v\rangle\\
&=-\langle s-\nabla_xd(x^{k+1},y^{k+1})\lambda^{k+1}_\bfy,v\rangle-v^T\nabla c(x^{k+1})\tl^{k+1}_\bfx\\
&\overset{\eqref{mfcq-ineq}}\geq -\langle s,v\rangle-\|\nabla_xd(x^{k+1},y^{k+1})\|\|\lambda^{k+1}_\bfy\|\|v\|+\delta_c\|\tl^{k+1}_\bfx\|_1 \\
&\geq -L_F -L_d\|\lambda^{k+1}_\bfy\|+\delta_c\|\tl^{k+1}_\bfx\|_1,
\end{align*}
where the last inequality is due to $\|v\|=1$ and Assumptions \ref{a1}(i) and \ref{a1}(iii).
Notice from \eqref{rhok-bnd} that \eqref{muk-bnd} holds. It then follows from \eqref{y-cnstr} that $\|\lambda^{k+1}_\bfy\|\leq2\delta_d^{-1}(\Delta+D_\bfy)$, which together with the above inequality and $\epsilon_k\leq 1$ yields
\beq \label{tl-bnd}
\|\tl^{k+1}_\bfx\| \leq \|\tl^{k+1}_\bfx\|_1 \leq \delta_c^{-1}(L_F +L_d\|\lambda^{k+1}_\bfy\|+\epsilon_k)\leq \delta_c^{-1}(L_F +2L_d\delta_d^{-1}(\Delta+D_\bfy)+1).
\eeq
By this and \eqref{def-tlx1}, one can observe that
\[
\|[c(x^{k+1})]_+\| \le \rho^{-1}_k \|[\lambda^k_\bfx+\rho_kc(x^{k+1})]_+\| = \rho^{-1}_k \|\tl^{k+1}_\bfx\| \leq \rho^{-1}_k\delta_c^{-1}(L_F +2L_d\delta_d^{-1}(\Delta+D_\bfy)+1).
\]
Hence, \eqref{x-cnstr2} holds as desired. 

We next show that \eqref{c-complim} holds. Indeed, by $\tl^{k+1}_\bfx\geq 0$, \eqref{x-cnstr2} and \eqref{tl-bnd}, one has 
\begin{align}
\langle\tl^{k+1}_\bfx,c(x^{k+1})\rangle & \ \ \leq \ \langle\tl^{k+1}_\bfx,[c(x^{k+1})]_+\rangle \leq\|\tl^{k+1}_\bfx\|\|[c(x^{k+1})]_+\|\nn \\ 
& \overset{\eqref{x-cnstr2}\eqref{tl-bnd}}\leq\rho^{-1}_k\delta_c^{-2}(L_F +2L_d\delta_d^{-1}(\Delta+D_\bfy)+1)^2. \label{complim-ineq2}
\end{align}
Notice that $\langle\lambda^{k+1}_\bfx,\lambda^k_\bfx+\rho_kc(x^{k+1})\rangle=\|[\lambda^k_\bfx+\rho_kc(x^{k+1})]_+\|^2\geq0$. Hence, we have
\[
-\langle\tl^{k+1}_\bfx,\rho_k^{-1}\lambda^k_\bfx\rangle
\leq \langle\tl^{k+1}_\bfx,c(x^{k+1})\rangle,
\]
which along with $\|\lambda^k_\bfx\|\leq\Lambda$ and \eqref{tl-bnd} yields
\[
\langle\tl^{k+1}_\bfx,c(x^{k+1})\rangle\geq-\rho_k^{-1}\|\tl^{k+1}_\bfx\|\|\lambda^k_\bfx\| \geq -\rho^{-1}_k \delta_c^{-1}(L_F +2L_d\delta_d^{-1}(\Delta+D_\bfy)+1)\Lambda.
\]
The relation \eqref{c-complim} then follows from this and \eqref{complim-ineq2}.
\end{proof}

We are now ready to prove Theorem \ref{complexity} using Lemmas \ref{l-subdcnstr}, \ref{l-subp} and \ref{l-xcnstr2}.

\begin{proof}[\textbf{Proof of Theorem \ref{complexity}}]
(i) Observe from the definition of $K$ in \eqref{K1} and $\epsilon_k=\tau^k$ that $K$ is the smallest nonnegative integer such that $\epsilon_K\leq\varepsilon$. Hence, Algorithm \ref{AL-alg} terminates and outputs $(x^{K+1},y^{K+1})$ after $K+1$ outer iterations. It follows from these and $\rho_k=\epsilon_k^{-1}$ that $\epsilon_K\leq\varepsilon$ and $\rho_K\geq\varepsilon^{-1}$. By this and  \eqref{cond}, one can see that \eqref{muk-1} and \eqref{rhok-bnd} holds for $k=K$. It then follows from Lemmas \ref{l-subdcnstr} and \ref{l-xcnstr2} that \eqref{t1-1}-\eqref{t1-6} hold. 

(ii) Let $K$ and $N$ be given in \eqref{K1} and \eqref{N2}. Recall from Lemma \ref{l-subp} that the number of evaluations of $\nabla f$, $\nabla c$, $\nabla d$, proximal operators of $p$ and $q$ performed by Algorithm \ref{mmax-alg2} at iteration $k$ of Algorithm~\ref{AL-alg} is at most $N_k$, where $N_k$ is given in \eqref{mmax-N}. By this and statement (i) of this theorem, one can observe that the total number of evaluations of $\nabla f$, $\nabla c$, $\nabla d$, proximal operators of $p$ and $q$ performed in Algorithm~\ref{AL-alg} is no more than $\sum_{k=0}^KN_k$, respectively. As a result, to prove statement (ii) of this theorem, it suffices to show that $\sum_{k=0}^KN_k\leq N$. Recall from  \eqref{L-ineq} and Algorithm \ref{AL-alg} that  $\rho_kL_c^2\leq L_k\leq\rho_kL$ and $\rho_k\geq1\geq\epsilon_k$. Using these, \eqref{ho}, \eqref{hM}, \eqref{hT}, \eqref{mmax-omega}, \eqref{mmax-zeta}, \eqref{mmax-Mk} and \eqref{mmax-Tk}, we obtain that
\begin{align}
&1\geq\alpha_k\geq\min\left\{1,\sqrt{8\sigma/(\rho_kL)}\right\}\geq\rho_k^{-1/2}\alpha, \label{alpha-ineq}\\
&\delta_k\leq(2+\rho_k^{1/2}\alpha^{-1})\rho_kL D_\bfx^2+\max\{2\sigma,\rho_kL/4\}D_\bfy^2\leq\rho_k^{3/2}\delta, \label{delta-ineq}\\
&M_k\leq\frac{16\max\left\{1/(2\rho_kL_c^2),4/(\rho_k^{-1/2}\alpha\rho_kL_c^2)\right\}}{\left[9\rho_k^2L^2/\min\{\rho_kL_c^2,\sigma\}+3\rho_kL\right]^{-2}\epsilon_k^2}\times\Bigg(\rho_k^{3/2}\delta+2\rho_k^{1/2}\alpha^{-1}\nn\\
&\ \ \ \ \ \ \ \times\Big(\Delta+\frac{\Lambda^2}{2}+\frac{3}{2}\|\lambda_\bfy^0\|^2+\frac{3(\Delta+D_\bfy)}{1-\tau}+\rho_kd_{\rm hi}^2+\rho_kL D_\bfx^2\Big)\Bigg) \label{M-ineq} \\
&\leq \frac{16\rho_k^{-1/2}\max\left\{1/(2L_c^2),4/(\alpha L_c^2)\right\}}{\rho_k^{-4}\left[9L^2/\min\{L_c^2,\sigma\}+3L\right]^{-2}\epsilon_k^2}\times\rho_k^{3/2}\Bigg(\delta+2\alpha^{-1}\nn \\
&\ \ \times\Big(\Delta+\frac{\Lambda^2}{2}+\frac{3}{2}\|\lambda_\bfy^0\|^2+\frac{3(\Delta+D_\bfy)}{1-\tau}+d_{\rm hi}^2+L D_\bfx^2\Big)\Bigg)\leq\epsilon_k^{-2}\rho_k^5M, \nn \\
&T_k\leq\Bigg\lceil16\left(2\Delta+\Lambda+\frac{1}{2}(\tau^{-1}+\|\lambda_\bfy^0\|^2)+\frac{\Delta+D_\bfy}{1-\tau}+\frac{\Lambda^2}{2}\right)\epsilon_k^{-2}\rho_kL\nn \\
&\ \ \ \ \ \ \ \ +8(1+\sigma^{-2}\rho_k^2L^2)\epsilon_k^{-2}-1\Bigg\rceil_+\leq\epsilon_k^{-2}\rho_kT, \nn
\end{align}
where \eqref{M-ineq} follows from \eqref{ho}, \eqref{hM}, \eqref{hT}, \eqref{alpha-ineq}, \eqref{delta-ineq}, $\rho_kL_c^2\leq L_k\leq\rho_kL$, and $\rho_k\geq1\geq\epsilon_k$. 
By the above inequalities, \eqref{mmax-N}, \eqref{L-ineq}, $T\geq1$ and $\rho_k\geq1\geq\epsilon_k$, one has
\begin{align}
&\sum_{k=0}^KN_k\leq \sum_{k=0}^K3397\max\left\{2,\sqrt{\rho_kL/(2\sigma)}\right\}\nn\\
&\ \ \ \ \ \ \ \ \ \ \ \ \ \times\left((\epsilon_k^{-2}\rho_kT+1)(\log (\epsilon_k^{-2}\rho_k^5M))_++\epsilon_k^{-2}\rho_kT+1+2\epsilon_k^{-2}\rho_kT\log(\epsilon_k^{-2}\rho_kT+1) \right)\nn\\
&\leq\sum_{k=0}^K3397\max\left\{2,\sqrt{L/(2\sigma)}\right\} \times\epsilon_k^{-2}\rho_k^{3/2}\left((T+1)(\log (\epsilon_k^{-2}\rho_k^5M))_++T+1+2T\log(\epsilon_k^{-2}\rho_kT+1) \right)\nn\\
&\leq\sum_{k=0}^K3397\max\left\{2,\sqrt{L/(2\sigma)}\right\} T\epsilon_k^{-2}\rho_k^{3/2}\left((2\log (\epsilon_k^{-2}\rho_k^5M))_++2+2\log(2\epsilon_k^{-2}\rho_kT) \right)\nn\\
&\leq\sum_{k=0}^K3397\max\left\{2,\sqrt{L/(2\sigma)}\right\}T\epsilon_k^{-2}\rho_k^{3/2}\left(12\log\rho_k-8\log\epsilon_k+2(\log M)_++2+2\log(2T) \right),\label{sum-N}
\end{align}
By the definition of $K$ in \eqref{K1}, one has $\tau^K\geq\tau\varepsilon$. Also, notice from Algorithm \ref{AL-alg} that $\rho_k=\tau^{-k}$. It then follows from these, \eqref{N2} and \eqref{sum-N} that
\begin{align*}
&\sum_{k=0}^KN_k\leq\sum_{k=0}^K3397\max\left\{2,\sqrt{L/(2\sigma)}\right\}T\epsilon_k^{-7/2}\left(20\log(1/\epsilon_k)+2(\log M)_++2+2\log(2T) \right)\\
&=3397\max\left\{2,\sqrt{L/(2\sigma)}\right\}T\sum_{k=0}^K\tau^{-7k/2}\left(20k\log(1/\tau)+2(\log M)_++2+2\log(2T) \right)\\
&\leq 3397\max\left\{2,\sqrt{L/(2\sigma)}\right\}T\sum_{k=0}^K\tau^{-7k/2}\left(20K\log(1/\tau)+2(\log M)_++2+2\log(2T) \right)\\
&\leq 3397\max\left\{2,\sqrt{L/(2\sigma)}\right\}T\tau^{-7/2K}(1-\tau^4)^{-1}\\
&\ \ \ \times\left(20K\log(1/\tau)+2(\log M)_++2+2\log(2T) \right)\\
&\leq 3397\max\left\{2,\sqrt{L/(2\sigma)}\right\}T(1-\tau^{7/2})^{-1}\\
&\ \ \ \times(\tau\varepsilon)^{-7/2}\left(20K\log(1/\tau)+2(\log M)_++2+2\log(2T) \right)\overset{\eqref{N2}}{=} N,
\end{align*}
where the second last inequality is due to $\sum_{k=0}^K\tau^{-7k/2}\leq \tau^{-7K/2}/(1-\tau^{7/2})$, and the last inequality is due to $\tau^K\geq\tau\varepsilon$. Hence, statement (ii) of this theorem holds as desired.
\end{proof}

\section*{Declarations}

The authors report there are no competing interests to declare.

\appendix

\section{A modified optimal first-order method for strongly-convex-strongly-concave minimax problem}
\label{strong-cvx-ccv}

In this part, we present a modified optimal first-order method \cite[Algorithm 1]{lu2024first} in Algorithm \ref{mmax-alg1} below for finding an approximate primal-dual stationary point of strongly-convex-strongly-concave minimax problem
\begin{equation}\label{ea-prob}
\H^*=\min_{x}\max_{y}\left\{\H(x,y)\coloneqq \h(x,y)+\np(x)-\nq(y)\right\},
\end{equation}
which satisfies the following assumptions.
\begin{assumption}\label{ea}
\begin{enumerate}[label=(\roman*)]
\item $p:\bR^n\to\bR\cup\{\infty\}$ and $q:\bR^m\to\bR\cup\{\infty\}$ are proper convex functions and continuous on $\dom\,p$ and $\dom\,q$, respectively, and moreover, $\dom\,p$ and $\dom\,q$ are compact.
\item The proximal operators associated with $p$ and $q$ can be exactly evaluated.
\item $\h(x,y)$ is $\sigma_x$-strongly-convex-$\sigma_y$-strongly-concave and $L_{\nabla\h}$-smooth on $\dom\,\np\times\dom\,\nq$ for some $\sigma_x,\sigma_y>0$.
\end{enumerate}
\end{assumption}

For convenience of presentation, we introduce some notation below, most of which is adopted from \cite{kovalev2022first}. Let $\mcX=\dom\,p$, $\mcY=\dom\,q$,   $(x^*,y^*)$ denote the optimal solution of \eqref{ea-prob}, $z^*=-\sigma_x x^*$,  and
\begin{align}
&D_\bfx\coloneqq \max\{\|u-v\|\big|u,v\in\mcX\},\quad D_\bfy\coloneqq\max\{\|u-v\|\big|u,v\in\mcY\}, \label{mmax-D1}\\
&\H_{\rm low}=\min\left\{\H(x,y)|\right(x,y)\in\mcX \times\mcY\}, \label{ea-bnd} \\
&\hat h(x,y)=\h(x,y)-\sigma_x\|x\|^2/2+\sigma_y\|y\|^2/2,\nn \\
&\cG(z,y)=\sup_{x}\{\langle x,z\rangle-\np(x)-\hat h(x,y)+\nq(y)\},\nn\\
&\cP(z,y)=\sigma_x^{-1}\|z\|^2/2+\sigma_y\|y\|^2/2+\cG(z,y),\nn \\
&\vartheta_k=\eta_z^{-1}\|z^k-z^*\|^2+\eta_y^{-1}\|y^k-y^*\|^2+2\bar\alpha^{-1}(\cP(z^k_f,y^k_f)-\cP(z^*,y^*)), \label{ea-L} \\
&a^k_x(x,y)=\nabla_x\hat h(x,y)+\sigma_x(x-\sigma_x^{-1}z^k_g)/2,\quad a^k_y(x,y)=-\nabla_y\hat h(x,y)+\sigma_y y+\sigma_x(y-y^k_g)/8,\notag
\end{align}
where $\bar \alpha=\min\left\{1,\sqrt{8\sigma_y/\sigma_x}\right\}$, $\eta_z=\sigma_x/2$, $\eta_y=\min\left\{1/(2\sigma_y),4/(\bar\alpha\sigma_x)\right\}$, and $y^k$, $y^k_f$, $y^k_g$, $z^k$, $z^k_f$ and $z^k_g$ are generated at iteration $k$ of Algorithm \ref{mmax-alg1} below. By Assumption~\ref{ea}, one can observe that $D_\bfx$, $D_\bfy$ and $\H_{\rm low}$ are finite. 

We are now ready to review a modified optimal first-order method \cite[Algorithm 1]{lu2024first} for solving \eqref{ea-prob} in Algorithm \ref{mmax-alg1}. It is a slight modification of an optimal first-order method \cite[Algorithm 4]{kovalev2022first} by incorporating a forward-backward splitting scheme and a verifiable termination criterion (see steps 23-25 in Algorithm \ref{mmax-alg1}) in order to find an $\bar\epsilon$-primal-dual stationary point of problem \eqref{ea-prob} for any prescribed tolerance $\bar\epsilon>0$. 

\begin{algorithm}[H]
\caption{A modified optimal first-order method for problem \eqref{ea-prob}}
\label{mmax-alg1}
\begin{algorithmic}[1]
\REQUIRE $\bar\epsilon>0$, $\bar z^0=z^0_f\in-\sigma_x\dom\,\np$,\footnote{} $\bar y^0=y^0_f\in\dom\,\nq$, $(z^0,y^0)=(\bar z^0, \bar y^0)$,  $\bar \alpha=\min\left\{1,\sqrt{8\sigma_y/\sigma_x}\right\}$, $\eta_z=\sigma_x/2$, $\eta_y=\min\left\{1/(2\sigma_y),4/(\bar \alpha\sigma_x)\right\}$, $\beta_t=2/(t+3)$, $\zeta=\left(2\sqrt{5}(1+8L_{\nabla\h}/\sigma_x)\right)^{-1}$, $\gamma_x=\gamma_y=8\sigma_x^{-1}$, and $\bar\zeta=\min\{\sigma_x,\sigma_y\}/L_{\nabla \h}^2$.
\FOR{$k=0,1,2,\ldots$}
\STATE $(z^k_g,y^k_g)=\bar \alpha(z^k,y^k)+(1-\bar \alpha)(z^k_f,y^k_f)$.
\STATE $(x^{k,-1},y^{k,-1})=(-\sigma_x^{-1}z^k_g,y^k_g)$.
\STATE $x^{k,0}=\prox_{\zeta\gamma_x\np}(x^{k,-1}-\zeta\gamma_x a^k_x(x^{k,-1},y^{k,-1}))$.
\STATE $y^{k,0}=\prox_{\zeta\gamma_y \nq}(y^{k,-1}-\zeta\gamma_y a^k_y(x^{k,-1},y^{k,-1}))$.
\STATE $b^{k,0}_x=\frac{1}{\zeta\gamma_x}(x^{k,-1}-\zeta\gamma_x a^k_x(x^{k,-1},y^{k,-1})-x^{k,0})$.
\STATE $b^{k,0}_y=\frac{1}{\zeta\gamma_y}(y^{k,-1}-\zeta\gamma_y a^k_y(x^{k,-1},y^{k,-1})-y^{k,0})$.
\STATE $t=0$.
\WHILE{\\ $\gamma_x\|a^k_x(x^{k,t},y^{k,t})+b^{k,t}_x\|^2+\gamma_y\|a^k_y(x^{k,t},y^{k,t})+b^{k,t}_y\|^2>\gamma_x^{-1}\|x^{k,t}-x^{k,-1}\|^2+\gamma_y^{-1}\|y^{k,t}-y^{k,-1}\|^2$\\~~}
\STATE $x^{k,t+1/2}=x^{k,t}+\beta_t(x^{k,0}-x^{k,t})-\zeta\gamma_x(a^k_x(x^{k,t},y^{k,t})+b^{k,t}_x)$.
\STATE $y^{k,t+1/2}=y^{k,t}+\beta_t(y^{k,0}-y^{k,t})-\zeta\gamma_y(a^k_y(x^{k,t},y^{k,t})+b^{k,t}_y)$.
\STATE $x^{k,t+1}=\prox_{\zeta\gamma_x \np}(x^{k,t}+\beta_t(x^{k,0}-x^{k,t})-\zeta\gamma_x a^k_x(x^{k,t+1/2},y^{k,t+1/2}))$.
\STATE $y^{k,t+1}=\prox_{\zeta\gamma_y \nq}(y^{k,t}+\beta_t(y^{k,0}-y^{k,t})-\zeta\gamma_y a^k_y(x^{k,t+1/2},y^{k,t+1/2}))$.
\STATE $b^{k,t+1}_x=\frac{1}{\zeta\gamma_x}(x^{k,t}+\beta_t(x^{k,0}-x^{k,t})-\zeta\gamma_x a^k_x(x^{k,t+1/2},y^{k,t+1/2})-x^{k,t+1})$.
\STATE $b^{k,t+1}_y=\frac{1}{\zeta\gamma_y}(y^{k,t}+\beta_t(y^{k,0}-y^{k,t})-\zeta\gamma_y a^k_y(x^{k,t+1/2},y^{k,t+1/2})-y^{k,t+1})$.
\STATE $t \leftarrow t+1$.
\ENDWHILE
\STATE $(x^{k+1}_f,y^{k+1}_f)=(x^{k,t},y^{k,t})$.
\STATE $(z^{k+1}_f,w^{k+1}_f)=(\nabla_x\hat h(x^{k+1}_f,y^{k+1}_f)+b^{k,t}_x,-\nabla_y\hat h(x^{k+1}_f,y^{k+1}_f)+b^{k,t}_y)$.
\STATE $z^{k+1}=z^k+\eta_z\sigma_x^{-1}(z^{k+1}_f-z^k)-\eta_z(x^{k+1}_f+\sigma_x^{-1}z^{k+1}_f)$.
\STATE $y^{k+1}=y^k+\eta_y\sigma_y(y^{k+1}_f-y^k)-\eta_y(w^{k+1}_f+\sigma_yy^{k+1}_f)$.
\STATE $x^{k+1}=-\sigma_x^{-1}z^{k+1}$.
\STATE $\tx^{k+1}=\prox_{\bar\zeta \np}(x^{k+1}-\bar\zeta\nabla_x\h(x^{k+1},y^{k+1}))$.
\STATE $\ty^{k+1}=\prox_{\bar\zeta \nq}(y^{k+1}+\bar\zeta\nabla_y\h(x^{k+1},y^{k+1}))$.
\STATE Terminate the algorithm and output $(\tx^{k+1},\ty^{k+1})$ if
\[
\|\bar\zeta^{-1}(x^{k+1}-\tx^{k+1},\ty^{k+1}-y^{k+1})-(\nabla \h(x^{k+1},y^{k+1})-\nabla \h(\tx^{k+1},\ty^{k+1}))\|\leq\bar\epsilon.
\]
\ENDFOR
\end{algorithmic}							
\end{algorithm}
\footnotetext{For convenience, $-\sigma_x\dom\,\np$ stands for the set $\{-\sigma_x u|u\in\dom\,\np\}$.}

The following theorem presents \emph{iteration and operation complexity} of Algorithm~\ref{mmax-alg1} for finding an $\bar\epsilon$-primal-dual stationary point of problem \eqref{ea-prob}, whose proof can be found in \cite[Section 4.1]{lu2024first}.

\begin{thm}[{\bf Complexity of Algorithm \ref{mmax-alg1}}]\label{ea-prop}
Suppose that Assumption \ref{ea} hold. Let $\H^*$, $D_\bfx$, $D_\bfy$, $\H_{\rm low}$, and $\vartheta_0$ be defined in \eqref{ea-prob}, \eqref{mmax-D1}, \eqref{ea-bnd} and \eqref{ea-L}, $\sigma_x$, $\sigma_y$ and $L_{\nabla \h}$ be given in Assumption \ref{ea}, $\bar \alpha$, $\eta_y$, $\eta_z$, $\bar\epsilon$, $\bar\zeta$ be given in Algorithm~\ref{mmax-alg1}, and 
\begin{align}
\bar \delta=&\ (2+\bar \alpha^{-1})\sigma_x D_\bfx^2+\max\{2\sigma_y,\bar \alpha\sigma_x/4\}D_\bfy^2,\nn \\ 
\bar K=&\ \left\lceil\max\left\{\frac{2}{\bar \alpha},\frac{\bar \alpha\sigma_x}{4\sigma_y}\right\}\log\frac{4\max\{\eta_z\sigma_x^{-2},\eta_y\}\vartheta_0}{(\bar\zeta^{-1}+ L_{\nabla \h})^{-2}\bar\epsilon^2}\right\rceil_+, \nn \\ 
\bar N=&\ \left\lceil\max\left\{2,\sqrt{\frac{\sigma_x}{2\sigma_y}}\right\}\log\frac{4\max\left\{1/(2\sigma_x),\min\left\{1/(2\sigma_y),4/(\bar \alpha\sigma_x)\right\}\right\}\left(\bar \delta+2\bar \alpha^{-1}\left(\H^*-\H_{\rm low}\right)\right)}{(L_{\nabla \h}^2/\min\{\sigma_x,\sigma_y\}+ L_{\nabla \h})^{-2}\bar\epsilon^2}\right\rceil_+\notag\\
&\ \times\ \left(\left\lceil96\sqrt{2}\left(1+8L_{\nabla \h}\sigma_x^{-1}\right)\right\rceil+2\right). \nn 
\end{align}
Then Algorithm~\ref{mmax-alg1} outputs an $\bar\epsilon$-primal-dual stationary point of \eqref{ea-prob} in at most $\bar K$ iterations. Moreover, the total number of evaluations of $\nabla \h$ and proximal operators of $\np$ and $\nq$ performed in Algorithm~\ref{mmax-alg1} is no more than $\bar N$, respectively.
\end{thm}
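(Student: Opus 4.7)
The plan is to adapt the Lyapunov analysis of the optimal primal-dual method of \cite{kovalev2022first} to Algorithm~\ref{mmax-alg1}, which extends that scheme in two ways: it accommodates the composite terms $\np,\nq$ via proximal steps, and it appends a verifiable $\bar\epsilon$-stationarity certificate through the final forward-backward check at lines 23--25. The first stage is to establish a one-step contraction of the outer potential $\vartheta_k$ defined in \eqref{ea-L}. The outer iteration should be viewed as an accelerated primal-dual method for the convex-concave saddle problem $\min_z\max_y\cP(z,y)$; the $\sigma_x$-strong convexity of $\tfrac{1}{2}\sigma_x^{-1}\|z\|^2$ in $z$ and the $\sigma_y$-strong concavity in $y$, combined with the Kovalev-Gasnikov parameters $\bar\alpha,\eta_z,\eta_y$, yield a geometric decay of the form $\vartheta_{k+1}\leq(1-\bar\alpha/2)\vartheta_k$. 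The inner block at lines 2--18 produces $(x^{k+1}_f,y^{k+1}_f)$ together with residual vectors $(b^{k,t}_x,b^{k,t}_y)$ that play the role of subgradients of $\np,\nq$ at the inexact inner solution; the while-loop predicate at line 9 is precisely what is needed to make this inexactness compatible with the outer descent.

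The second stage bounds the inner loop. The inner saddle subproblem parametrized by $(z^k_g,y^k_g)$ is strongly monotone with condition number $O(L_{\nabla\h}/\sigma_x)$, and the extragradient-type iterates at lines 10--15 with stepsizes $\zeta,\gamma_x,\gamma_y$ drive a Bregman-like residual to zero at a geometric rate. A standard telescoping argument shows the while-loop predicate must fail after at most $\lceil 96\sqrt{2}(1+8L_{\nabla\h}/\sigma_x)\rceil$ iterations, which together with the initialization at lines 3--7 and the two-line termination check produces the constant factor $\lceil 96\sqrt{2}(1+8L_{\nabla\h}/\sigma_x)\rceil+2$ entering $\bar N$.

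The third stage converts the smallness of $\vartheta_k$ into the termination quantity. Using $x^{k+1}=-\sigma_x^{-1}z^{k+1}$ together with the obvious bounds $\|z^k-z^*\|^2\leq\eta_z\vartheta_k$ and $\|y^k-y^*\|^2\leq\eta_y\vartheta_k$, and then using the $L_{\nabla\h}$-Lipschitz continuity of $\nabla\h$ and the nonexpansiveness of $\prox_{\bar\zeta\np}$ and $\prox_{\bar\zeta\nq}$, one obtains that the norm tested at line 25 is controlled, up to an absolute constant, by $(\bar\zeta^{-1}+L_{\nabla\h})\sqrt{\max\{\eta_z\sigma_x^{-2},\eta_y\}\vartheta_k}$. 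Requiring this to be at most $\bar\epsilon$ and inverting $\vartheta_k\leq(1-\bar\alpha/2)^k\vartheta_0$ yields the stated $\bar K$, once $\vartheta_0$ is majorized by $\bar\delta+2\bar\alpha^{-1}(\H^*-\H_{\rm low})$ via a direct expansion of $\cP(z^0_f,y^0_f)-\cP(z^*,y^*)$ using Assumption~\ref{ea} and the diameters $D_\bfx,D_\bfy$. A first-order optimality reading of the proximal identities at lines 23--24 then shows that $\dist(0,\partial_x\H(\tx^{k+1},\ty^{k+1}))$ and $\dist(0,\partial_y\H(\tx^{k+1},\ty^{k+1}))$ are each bounded by exactly the quantity compared against $\bar\epsilon$ at line 25, so $(\tx^{k+1},\ty^{k+1})$ is an $\bar\epsilon$-primal-dual stationary point of \eqref{ea-prob}.

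Finally, the bound $\bar N$ follows by multiplying the uniform per-outer inner cost by at most $\bar K+1$ outer iterations and observing that every inner step requires only $O(1)$ evaluations of $\nabla\h$ and of the proximal operators of $\np$ and $\nq$. The main technical obstacle is the first stage: showing that the adaptive while-loop stopping condition suffices to preserve the $(1-\bar\alpha/2)$-rate of the Kovalev-Gasnikov potential argument in the presence of the composite proximal residuals $b^{k,t}_x,b^{k,t}_y$. Controlling the cross terms between these residuals and the momentum terms is delicate and relies critically on combining the line-9 predicate with the specific choice $\zeta=(2\sqrt{5}(1+8L_{\nabla\h}/\sigma_x))^{-1}$, so that the inner inexactness is absorbed within the contraction without degrading the rate.
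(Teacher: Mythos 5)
Your plan is essentially the route the authors take: the paper itself gives no in-text proof of Theorem~\ref{ea-prop} but defers it to \cite[Section 4.1]{lu2024first}, and that analysis proceeds exactly as you outline — a Kovalev--Gasnikov-type potential contraction for $\vartheta_k$ (note $\max\{2/\bar\alpha,\bar\alpha\sigma_x/(4\sigma_y)\}=2/\bar\alpha$ here, so your $(1-\bar\alpha/2)$ rate is the right one), a uniform $\lceil96\sqrt{2}(1+8L_{\nabla\h}/\sigma_x)\rceil+2$ bound on the per-outer-iteration work from the line-9 predicate and the choice of $\zeta$, a bound of $\vartheta_0$ by $\bar\delta+2\bar\alpha^{-1}(\H^*-\H_{\rm low})$ via the diameters, and the forward--backward residual at lines 23--25 serving as the verifiable $\bar\epsilon$-stationarity certificate. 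Your sketch leaves the two hardest steps (the contraction under the inexact inner solves and the inner-loop iteration bound) asserted rather than derived, but the mechanisms you identify for them are the ones used in the cited proof.
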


\end{document}